\newtheorem{rem}{Remark}[section]
\newtheorem{mydef}{Definition}[section]
\newtheorem{lem}{Lemma}[section]
\newtheorem{prop}{Proposition}[section]
\newtheorem{cor}{Corollary}[section]
\newtheorem{ex}{Example}[section]
\newtheorem{conj}{Conjecture}[section]
\newtheorem{open}{Open Problem}[section]
\numberwithin{equation}{section}
\newcommand\stackrqarrow[2]{%
	\mathrel{\stackunder[1pt]{\stackon[1pt]{$\rightsquigarrow$}{$\scriptscriptstyle#1$}}{%
			$\scriptscriptstyle#2$}}}
\def \E{{\rm I\kern-0.16em E}}
\def\P{{\rm I\kern-0.16em P}}
\def\F{{\rm I\kern-0.16em F}}
\def\B{{\rm I\kern-0.16em B}}
\def\C{{\rm I\kern-0.46em C}}
\def\G{{\rm I\kern-0.50em G}}
\def\D{{\rm I\kern-0.50em D}}
\newcommand{\VV}{\mathbb{V}}
\newcommand{\R}{\mathbb{R}}
\newcommand{\N}{\mathbb{N}}
\newcommand{\Lbb}{\mathbb{L}}
\newcommand{\Hbb}{\mathbb{H}}
\newcommand{\KK}{\mathbb{K}}
\newcommand{\RR}{\mathfrak{R}}
\font\eka=cmex10
\def\ind{\mathrel{\hbox{\rlap{%
\hbox to 7.5pt{\hrulefill}}\raise6.6pt\hbox{\eka\char'167}}}}
\begin{document}
\title{ \bf On algebraic Stein operators for Gaussian polynomials }
\author{Ehsan Azmoodeh 
	\thanks{Department of Mathematical Sciences,
		University of Liverpool, Liverpool L69 7ZL, United Kingdom.
		E-mail: \texttt{ehsan.azmoodeh@liverpool.ac.uk}}, \quad 
	Dario Gasbarra
	\thanks{University of Helsinki, Department of Mathematics and Statistics, B 314, FI-00014, Finland. E-mail: \texttt{dario.gasbarra@helsinki.fi}} \quad Robert E$.$ Gaunt
	\thanks{The University of Manchester, Department of Mathematics, Alan Turing Building 2.217, M13 9PL Manchester, United Kingdom. E-mail:\texttt{ robert.gaunt@manchester.ac.uk}}
}

\date{ }                     
\setcounter{Maxaffil}{0}
\renewcommand\Affilfont{\itshape\small}

\maketitle 
\abstract 
The first essential ingredient to build up Stein's method for a continuous target distribution is to identify a so-called \textit{Stein operator}, namely a linear differential operator with polynomial coefficients. In this paper, we introduce the notion of \textit{algebraic} Stein operators (see Definition \ref{def:algebraic-Stein-Operator}), and provide a novel algebraic method to find \emph{all} the algebraic Stein operators up to a given order and polynomial degree for a target random variable of the form  $Y=h(X)$, where $X=(X_1,\dots, X_d)$ has i.i.d$.$ standard Gaussian components and $h\in \mathbb{K}[X]$ is a polynomial with coefficients in the ring $\mathbb{K}$. Our approach links the existence of an algebraic Stein operator with \textit{null controllability} of a certain linear discrete system. A \texttt{MATLAB} code checks the null controllability up to  a given finite time  $T$ (the order of the differential operator), and provides all \textit{null control} sequences (polynomial coefficients of the differential operator) up to a given maximum degree $m$.  This is the first paper that connects Stein's method with computational algebra to find Stein operators for highly complex probability distributions, such as $H_{20}(X_1)$, where $H_p$ is the $p$-th Hermite polynomial. Some examples of Stein operators for $H_p(X_1)$, $p=3,4,5,6$, are gathered in the Appendix and many other examples are given in the Supplementary Information.  

 \vskip0.3cm
\noindent {\bf Keywords}: Stein's method; Stein operator;  Gaussian integration by parts; Malliavin calculus; linear system theory; null controllability; symbolic computation; Hermite polynomials;\\
\noindent \textbf{MSC 2010}: Primary 60H07; 93B05 Secondary 60F05

\section{Introduction} 

We begin with the following definition that plays a pivotal role in our paper.
\begin{mydef}\label{def:PSO}
Let $Y$ be a (continuous) target random variable. We say that a linear differential operator $\mathcal{S} = \sum_{t=0}^{T} p_t \partial^t $ acting on a class $\mathcal{F}$ of functions is a \emph{polynomial Stein operator} for $Y$ if (a) $\mathcal{S}f\in L^1(Y)$, (b) $\mathbb{E} \left[ \mathcal{S} f(Y) \right] =0$ for all $f \in \mathcal{F}$, and (c) the coefficients of $\mathcal{S}$ are polynomial. By $PSO_{\mathcal{F}}(Y)$ we denote the set of all polynomial Stein operators, acting on a class $\mathcal{F}$ of functions, for the target random variable $Y$.
\end{mydef}
In the last decade, polynomial Stein operators have got a lot of attention due to their important  role in the Nourdin-Peccati Malliavian--Stein approach \cite{np09,n-p-book}. This powerful method not only provides a drastic simplification of the classical \textit{method of the moments/cumulants}, but also allows for quantification of many significant probabilistic limit theorems that were not possible before.
Historically, in 1972, Charles Stein \cite{stein} introduced a powerful
technique for estimating the error in Gaussian approximations.  Stein's method for Gaussian approximation rests on the following fundamental Gaussian integration by parts formula: for $X\sim N(0,1)$ a standard Gaussian random variable, 
\begin{align}\label{ibpformula}
  \mathbb{E}\bigl[   Xf(X)-\partial f(X) \bigr]=0
\end{align}
for all absolutely continuous functions $f:\mathbb{R}\rightarrow\mathbb{R}$ such that $\mathbb{E}  |\partial f(X)|<\infty$.  Here $\partial=\frac{\mathrm{d}}{\mathrm{d}x}$ is the usual differentiation operator.  This formula leads to the so-called Stein equation: 
\begin{equation} \label{normal equation} xf(x)-\partial f(x)=h(x)-\mathbb{E}[h(X)],
\end{equation} 
where the test function $h$ is real-valued. It is straightforward to verify that $f(x)=-\mathrm{e}^{x^2/2}\int_{-\infty}^x\{h(t)-\mathbb{E}[h(X)]\}\mathrm{e}^{-t^2/2}\,\mathrm{d}t$ solves (\ref{normal equation}), and bounds on the solution and its derivatives in terms of the test function $h$ and its derivatives are given in \cite{chen,dgv}.  Evaluating both sides of (\ref{normal equation}) at a random variable $W$ and taking expectations gives
\begin{equation} \label{expect} \mathbb{E}  [Wf(W)-\partial f(W)]=\mathbb{E}  [h(W)]-\mathbb{E}[h(X)].
\end{equation}
Thus, the problem of bounding the quantity $|\mathbb{E}  [h(W)]-\mathbb{E} [h(X)]|$ has been reduced to bounding the left-hand side of (\ref{expect}).  A detailed account of Stein's method for Gaussian approximation and some of its numerous applications throughout the mathematical sciences are given in the monograph \cite{stein2} and the books \cite{chen,n-p-book}.   

One of the advantages of Stein's method is that the above procedure can be extended to treat many other distributional approximations.  In adapting the method to a new continuous distribution, the first step is to find a suitable analogue of the integration by parts formula (\ref{ibpformula}).  For a target random variable $Y$, this amounts to seeking a {\it Stein operator} $\mathcal{S}$ acting on a class of functions $\mathcal{F}$ such that $\mathcal{S}f\in L^1(Y)$ and
\begin{align*}\mathbb{E}\bigl[ {\mathcal S} f(Y) \bigr]=0. \end{align*}
For continuous distributions, $\mathcal{S}$ is typically a differential operator, 
although some operators in the recent literature are integral or fractional \cite{ah19, Liu}.  As noted by \cite{barbour2}, for a given distribution, there are infinitely many Stein operators.  A common approach to sifting through the available options is to restrict to {\it $T$-th order polynomial Stein operators} (see Definition \ref{def:PSO}) of the form
\begin{align}\label{stein:operator}
 {\mathcal S} f(y) = \sum_{t=0}^T  p_t(y) \partial^t f(y)
\end{align}
in which the coefficients belongs to the polynomial ring $\mathbb{K}[y]$ in the single variable $y$. (In this paper, $\partial^0\equiv I$, the identity operator.  For ease of exposition, we abuse notation and write $y$ in place of $yI$.)  In addition to their utility in the Malliavin-Stein method, such Stein operators are amenable to the various coupling techniques used in the implementation of Stein's method to derive distributional approximations, and as such the vast majority of differential Stein operators used in the literature take this form; for an overview see \cite{gms19,ley}.


 The  density method \cite{stein2,ls13,ley,stein3} leads to first order Stein operators, which are polynomial if the log derivative of the density is a rational function; this approach provides first order polynomial Stein operators for, amongst others, target distributions which belong to the Pearson family \cite{schoutens} or which satisfy a diffusive assumption \cite{dobler beta,kusuotud}.  Another method that naturally leads to first order Stein operators is the generator method of \cite{barbour2,gotze}.  However, it is often necessary to consider higher order operators; second order polynomial operators are needed for the Laplace \cite{pike}, PRR \cite{pekoz} and variance-gamma \cite{gaunt vg} distributions, for example.  This is because the densities of these distributions satisfy second order differential equations with polynomial coefficients.  In recent years, techniques have been developed for obtaining Stein operators in increasingly complex settings, such as the iterated conditioning argument for deriving Stein operators for products of a quite general class of distributions \cite{gaunt-pn,gaunt-ngb,gms19,gms19b} and the Fourier/Malliavin calculus  approach used to obtain Stein operators for linear combinations of gamma random variables \cite{aaps19a,aaps19b}. 

However, there remain many important distributional limits for which Stein's method has not yet been adapted to.  One identified by \cite{peccati14} to be of particular importance 
are those of the form $P(X)$, where $X\sim N(0,1)$ and $P$ is a polynomial of degree strictly greater than 2.  
The case in which $P$ is the $p$-th Hermite polynomial, $H_p(x)=(-1)^p\mathrm{e}^{x^2/2}\partial^d(\mathrm{e}^{-x^2/2})$, is of particular interest, due to their fundamental role in Gaussian analysis and Malliavin calculus. More importantly, they appear as the natural target distributions in the classic asymptotic theory of $U$-statistics, see  \cite[Section 4.4]{k-b-Ustatistics} and \cite[Chapter 3]{lee-Ustatistics}. For example, it is well-known that by taking the kernel $\psi(x_1,\ldots,x_p)=x_1x_2 \ldots x_p$, the limit of the corresponding $U$-statistics is $H_p (X)$, see page 87 in the latter reference.
 Indeed, since the seminal paper \cite{np09}, the Malliavin-Stein method has been used to derive numerous quantitative limits theorems for a wide class of laws (see, for example, \cite{ev13,ev15}), and has been particularly successful in the case that the target distribution is an element of the first or second Wiener chaos (see, for example, \cite{ aaps19a,a-m-p-s,a-g-tetilla,azmooden,dp16,et15,  n-n-p-mixedGaussian, np09, npr10, np15, npoly12}), but little is known regarding convergence to targets from higher order chaoses.  
 In the Stein's method literature, the only contribution is the modest one of \cite{gaunt-34} in which elementary but involved manipulations were used to obtain fifth and third order polynomial Stein operators for the target distributions $H_3(X)$ and $H_4(X)$, respectively, which represents a first step towards the long term goal of using the Malliavin-Stein method to understand convergence towards elements of higher order Wiener chaoses. In this paper we make progress that far exceeds that of \cite{gaunt-34}. 

In this work, we consider the multivariate Gaussian polynomial
$Y=h(X)$, where $X=(X_1,\dots, X_d)$ has independent and identically distributed (i.i.d$.$) standard
Gaussian components and $h\in \mathbb{K}[X]=\mathbb{K}[X_1,\dots,X_d]$, $\mathbb{K}=\mathbb{Z}$, $\mathbb{Q}$, or $\mathbb{R}$. For {\it any} target random variable $Y=h(X)$ of the above form, we show, in the formalism of linear system theory, that in Gaussian space, 
finding  Stein operators of the type (\ref{stein:operator}) with polynomial coefficients $p_t(y)\in\mathbb{K}[Y]$ is equivalent to
solving a  null controllability problem.  

Controllability is a fundamental concept in mathematical control theory. Consider a (finite-dimensional) discrete-time linear time-invariant system $x_{k+1} = Ax_k + Bu_k$, $k\in\mathbb{N}$ where $x\in\mathbb{R}^n$ is the state, $u\in\mathbb{R}^m$ is the input, and $A$ and $B$ are matrices of suitable size.
The controllability
property of such a system refers to the fact that any initial state can be steered to any final state by choosing the input appropriately. This notion was introduced by \cite{Kalman60} and was extensively studied by Kalman himself and many others; see \cite{Kalman,k80,son98} and the survey \cite{k93}. Controllability has many applications in control theory
and systems theory, as well as industrial and chemical process control, reactor control, control of electric bulk power systems, aerospace engineering and quantum systems theory. 

The significance of formulating the problem of finding polynomial Stein operators for Gaussian polynomials as a null controllability lies in the fact that there the problem of determining the controllability of a discrete-time linear system is well understood in the literature. The classical Kalman's rank test $\mathrm{rank}[B\,AB\,\cdots\,A^{n-1}B]=n$ is a necessary and sufficient for the
controllability of a linear finite dimensional discrete system. An alternative characterisation, which is
sometimes called the Popov-Belevitch-Hautus (PBH) test, presents an equivalent condition in terms of the
eigenmodes of the system \cite{am07,henri-1}. In Remark \ref{rem:Null-Controllability1}, we present the algorithm used in this paper for finding a null control sequence of a discrete-time linear system that can be implemented efficiently in modern computer algebra packages.

In Definition \ref{def:algebraic-Stein-Operator}, we introduce the notion of \emph{algebraic} Stein operators, a subclass of the class of polynomial Stein operators, which in fact coincides with the class of polynomial Stein operators in the case $d=1$ (see Proposition \ref{prop:Algebraic_Stein_Operators}). For a given target $Y=h(X)$, the above procedure can be implemented as an algorithm in modern computer algebra packages that can find \emph{all} algebraic Stein operators for $Y=h(X)$ up to a given order $T$ and maximal degree of polynomial coefficients $m=\max_{0\leq t\leq T}\mathrm{deg}(p_t(y))$.  We provide an efficient \texttt{MATLAB} code that is available to the research community at {\tt https://github.com/gasbarra/Algebraic-Stein-Equations}.


In dimension $d=1$, we use the method to study
the Stein operators for targets $Y=H_p(X)$. The `simplest'  Stein operators 
for $p=1, \dots, 6$
 are displayed in Appendix \ref{appendixa}. The Supplementary Information contains many more examples.  For $p\geq7$, the numerical coefficients in the Stein operators become too large to present in this paper, but we give some useful summaries of these Stein operators in Table \ref{tablesummary}.
 It should be clear that, apart from the first few cases,
it would be very hard to derive these Stein operators 
solely by bare-hands heroic calculations.  This paper therefore constitutes not only the first connection between Stein's method and control theory, but also the first study that deeply connects Stein's method with computer algebra programming to find Stein operators for highly complex probability distributions, such as $H_{20}(X)$. 

In addition to being able to find Stein operators that are out of reach of existing methods, our connection between Stein's method and null controllability has a feature not seen before in the Stein's method literature.  As our algorithm finds all algebraic Stein operators for a given target distribution $Y=h(X)$ up to a given order $T$ and polynomial degree $m$, we have confidence (at least in the case $d=1$) as to what polynomial Stein operators for that distribution are `simplest' in the sense of having minimal order $T$ or polynomial degree $m$.  For example, by running our \texttt{MATLAB} code to obtain polynomial Stein operators for $H_3(X)$ and $H_4(X)$ we know (with the aid of Proposition \ref{prop:higher-coeff}) that the fifth and third order operators that \cite{gaunt-34} obtained (see (\ref{h3x}) and (\ref{h4x})) are `simplest' in the sense of having minimum possible maximal degree 2 and have the minimum possible order $T$ for such operators, which was not obvious from the analysis of \cite{gaunt-34}. The complexity of Stein operators for $H_p(X)$ only increases as $p$ increases (see Table \ref{tablesummary}), with the implication being that there is a significant increase in difficulty in applying the Malliavin-Stein method for target distributions $H_p(X)$ for $p\geq3$. Another insight we gain from the outputs of our algorithm is that for $p\geq3$ there is not a unique `simplest' Stein operator in that the Stein operator with smallest possible order $T$ (degree $m$) does not have smallest possible maximal polynomial degree $m$ (order $T$). It remains to be seen whether the Stein operators with minimal order $T$ or minimal degree $m$ are more tractable when used in the Malliavin-Stein method.
Finally, we stress that the method of converting the problem of finding Stein operators into a null controllability problem applies more generally than even the Gaussian polynomial setting detailed in this paper. We expect this to be explored in future research.

We end this introduction by acknowledging that the focus of this paper is on finding Stein operators for Gaussian polynomials, but for sake of a focused treatment, we neglect the important question as to whether the Stein operators we obtain are characterising. This is the subject of our recent paper \cite{a-g-g21}.

The rest of our paper is organised as follows.  In Section \ref{sec2}, we review some fundamental Malliavin calculus operators and formulas on $\mathbb{K}[x_1,\ldots,x_d]$ that are needed in the paper.  
Section \ref{sec3} constitutes the heart of the paper.  In Section \ref{sec3.1}, we introduce the notion of a Stein chain, and make a connection between the existence of a Stein chain and polynomial Stein operators for Gaussian polynomials.  We present a method for validating Stein chains in Section \ref{sec3.2}.  In Section \ref{sec:Null}, we formulate the problem of finding Stein operators for Gaussian polynomials as a null controllability problem.  A concise description of the implementation of the null controllability approach in \texttt{MATLAB} is given in the Supplementary Information (SI).  In Section \ref{sec4}, we focus on applications to the important case of Gaussian Hermite polynomials $H_p(X)$, $X\sim N(0,1)$.  We provide a detailed description of the highest order coefficient of polynomial Stein operators for $H_p(X)$ in Section \ref{sec:Higher-Coef}.  In Section \ref{sec4.4}, we summarise some interesting features of the Stein operators for $H_p(X)$ that are obtained by our \texttt{MATLAB} code; examples of the Stein operators are given Appendix \ref{appendixa} and the SI.    
Finally, an auxiliary lemma is stated and proved in Appendix \ref{appendixc}.

\vspace{3mm}

\noindent{\emph{Note on the class of functions $\mathcal{F}$}:} Consider the polynomial Stein operator $\mathcal{S}=\sum_{t=0}^Tp_t(y)\partial^t$, with $\max_{0\leq t\leq T}\mathrm{deg}(p_t(y))=m$, for the target random variable $Y$, supported on $I\subseteq\mathbb{R}$. In this paper, the class of functions on which our polynomial Stein operators act is the class $\mathcal{F}_{\mathcal{S},Y}$, which is defined to be the set of all functions $f\in C^T(I)$ such that $\mathbb{E}|Y^jf^{(t)}(Y)|<\infty$ for all $t=0,\ldots,T$ and $j=0,\ldots,m$.  To ease notation, we write $PSO(Y)$ as shorthand for $PSO_{\mathcal{F}_{\mathcal{S},Y}}(Y)$.  We do not claim that this is the largest class of functions on which the Stein operators of this paper act, but the class is large enough for practical purposes and guarantees that in all of our proofs the crucial property $\mathbb{E}[\mathcal{S}f(Y)]=0$, $\forall f\in\mathcal{F}$, holds. We also highlight that when the target random variable $Y$ admits all moments (this is the case for all Gaussian polynomials $Y=h(X)$),
 the class $\mathcal{F}$ of functions as described above contains the class $C^\infty_c (\mathbb{R})$ of infinitely differentiable functions with compact support. 

\section{ Malliavin operators on $\mathbb{K}[x_1,\ldots,x_d]$ }\label{sec2}
For the scope of  our paper, it is enough  to define Malliavin calculus operators on $\mathbb{K}[x_1,\ldots,x_d]$ (see below for definitions) algebraically,  without discussing their  functional analytic extensions.
For a  state-of-the-art  exposition of Malliavin calculus  in full generality
see \cite{n-p-book,n-n-book}. 

\subsection{Algebraic preliminaries}
\if 0
\begin{mydef}[The ring of polynomials in several variables]\label{def:Ring-Polynomials}
The ring of polynomials $R=\mathbb{K}[x_1]$ in one intermediate $x_1$ with coefficients in $\mathbb{K}$ is the set of all polynomials \[ a_n x^n_1 + a_{n-1}x^{n-1}_1 + \ldots+ a_1 x_1 + a_0, \quad n\ge 0,\, a_i \in \mathbb{K}  , \]  with addition and multiplication defined via 
\begin{align*}
( \sum_{i=0}^{n} a_i x^i_1 )  + ( \sum_{i=0}^{n} b_i x^i_1) &= \sum_{i=0}^{n} (a_i + b_i) x^i_1,\\
( \sum_{i=0}^{n} a_i x^i_1 )  \cdot ( \sum_{i=0}^{m} b_i x^i_1) &= \sum_{i=0}^{n+m} \left(   \sum_{k=0}^{i} a_k b_{i-k} \right) x^i_1.
\end{align*}
In this way, $R$ is a commutative ring with identity (the identity $1$ from $\mathbb{K}$). Fix $d \ge2$. The polynomial ring in the variables $x_1,\ldots,x_d$ with coefficients in $\mathbb{K}$, denoted by $\mathbb{K}[x_1,\ldots,x_d]$, is defined inductively by
\[ \mathbb{K}[x_1,\ldots,x_d] = \mathbb{K}[x_1,\ldots,x_{d-1}] [x_d].   \]
Similarly, $\mathbb{K}[x_1,\ldots,x_d]$ is a commutative ring with the same identity (the identity $1$ from $\mathbb{K}$).
\end{mydef}
\fi 

We   denote by $\mathbb{K}[x_1,\ldots,x_d]$ the commutative ring of polynomials in the variables $x_1,\dots,x_d$ with coefficients 
 in  $\mathbb{K}$, equipped with the usual addition
 and multiplication.
\begin{mydef}[Ideal]
 A subset $J\subseteq \mathbb{K}[x_1,\ldots,x_d]$ is
	an \emph{ideal} when
	\begin{enumerate}
		\item    $g_1, g_2\in J \Longrightarrow ( g_1 + g_2) \in J$; 
		\item  $f\in \mathbb{K}[x_1,\ldots,x_d], \; g \in J \Longrightarrow fg  \in J$.
	\end{enumerate}
\end{mydef}
\begin{mydef}
	The \emph{ideal} generated by $A\subseteq \mathbb{K}[x_1,\ldots,x_d]$ is the smallest ideal of $\mathbb{K}[x_1,\ldots,x_d]$ containing $A$,  
	denoted by
	\begin{align*}
	\langle A\rangle = \bigcap_{ \mbox{ ideal } J \supset A }
	J = \biggl\{  \sum_{\ell=1}^n  f_{\ell} g_{\ell}  :  f_{\ell}\in \mathbb{K}[x_1,\ldots,x_d],\, g_{\ell}\in A,\, n\in \mathbb{N} \biggr\}.
	\end{align*}
	When  $J= \langle g_1,\dots,g_n \rangle$
	we say that the  ideal is finitely generated.
\end{mydef}
\begin{rem}\label{rem:PID}
The ring $\mathbb{K}[x_1]$ is a principal ideal domain, meaning that every ideal $I \subseteq \mathbb{K}[x_1]$ is principal (can be generated by one element). This useful fact gives a precise description of the structure of the highest order polynomial coefficients in Stein operators, see Proposition \ref{prop:higher-coeff}. 
\end{rem}

\if 
For a given target polynomial $h(x)\in \mathbb{K}[x]=\mathbb{K}[x_1,\dots, x_d]$, we introduce the quotient polynomial ring 
\begin{align*}
\RR=\frac{\mathbb{K}[ x,y ]}{ \langle y-h(x) \rangle} , \end{align*}  where $y$ and $h(x)$ are identified.
The  Malliavin derivative of  an $\RR$-element with the equivalence $q(X,Y)=_{\RR} q(X,h(X) )$ is defined as 
\begin{align}\label{eq:D-qutient-ring}
D q( X,Y) =& \partial_X q(X,Y) + \partial_Y q(X,Y) \partial h(X) 
\end{align}
independently of  the representation. 
\fi 
\subsection{One-dimensional case}
The major application of our algebraic method is to present Stein operators for univariate Gaussian Hermite polynomials. We start with a purely algebraic presentation of Malliavin operators in dimension $d=1$.
 
\begin{mydef} In the univariate case, the Malliavin derivative  $D$, the divergence $\delta$ and its pseudo-inverse  
	$\delta^{-1}$ are defined as linear mappings acting  on the polynomial ring $\mathbb{K}[x]=\mathbb{K}[x_1]$, with
	\begin{align*} &D x^n=  \partial x^n = n x^{n-1}, \quad 
	\delta x^n =(x-\partial) x^n= x^{n+1}- n x^{n-1}, \quad \text{and}& \\ &
	\delta^{-1} 1 = 0 , \quad\delta^{-1} x= 1,\quad 
	\delta^{-1} x^n= x^{n-1} + (n-1) \delta^{-1} x^{n-2} = 
	\sum_{k=0}^{\lfloor \frac{n-1}2 \rfloor}
	\frac{ ( n-1){!}{!}}{ (n-1-2k) {!}{!} } x^{n-1-2k}, &  
	\end{align*}
	where $n{!}{!}$ denotes the double factorial. 
\end{mydef}
\begin{prop}
Let $n \in \mathbb{N}_0$. It holds that $	\delta^{-1} \delta x^n=x^n$. Moreover, 
\[ \delta \delta^{-1} x^n= x^n-\mathbb{E} \left[ X^n \right],\]
where $X\sim N(0,1)$
has moment sequence $\mathbb{E} \left[ X^{n} \right]= (n-1){!}{!}$ when $n\in 2\mathbb{N}$ and $\mathbb{E} \left[ X^{n} \right]=0$ otherwise. 
\end{prop}

\begin{lem}[Gaussian integration by parts] \label{lem:GIBP-d=1} Let $X \sim N(0,1)$. For $f,g\in \mathbb{K}[x]$,
	\begin{align*}
	\mathbb{E}\bigl[  f(X) D g(X) \bigr]= 	\mathbb{E}\bigl[  f(X) \partial  g(X) \bigr]= \mathbb{E}\bigl[  g(X) \delta f( X)  \bigr]. 
	\end{align*}
\end{lem}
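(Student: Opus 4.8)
The plan is to reduce the statement to the classical Gaussian integration by parts identity (\ref{ibpformula}). The first equality is immediate: by definition $D$ and $\partial$ act identically on $\KK[x]$, since $Dx^n=\partial x^n=nx^{n-1}$, so $Dg=\partial g$ for every $g\in\KK[x]$ and hence $\E[f(X)Dg(X)]=\E[f(X)\partial g(X)]$ with nothing to prove. For the second equality, write $\delta f=(x-\partial)f=xf-\partial f$; then the claim $\E[f(X)\partial g(X)]=\E[g(X)\delta f(X)]$ is, after expanding and applying the product rule, equivalent to
\[ \E\bigl[\partial(fg)(X)\bigr]=\E\bigl[X\,(fg)(X)\bigr]. \]
Since $fg\in\KK[x]$ is a polynomial, it is absolutely continuous with $\E|\partial(fg)(X)|<\infty$ (as $X$ has moments of all orders), so this is precisely (\ref{ibpformula}) applied to the test function $fg$.

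Alternatively, and more in keeping with the algebraic flavour of this section, one can argue by bilinearity. Both sides of the desired identity are $\KK$-bilinear in $(f,g)$, so it suffices to check it on monomials $f=x^m$, $g=x^n$ with $m,n\in\N_0$. Here the left-hand side equals $n\,\E[X^{m+n-1}]$ and the right-hand side equals $\E[X^{m+n+1}]-m\,\E[X^{m+n-1}]$; the two agree because the standard Gaussian moments obey the recursion $\E[X^{k+1}]=k\,\E[X^{k-1}]$ (used with $k=m+n$), which is read directly off the formula $\E[X^{2j}]=(2j-1)!\!!$, $\E[X^{2j+1}]=0$ recorded in the preceding proposition. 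This route has the advantage of requiring no analytic input about the Gaussian density.

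I do not expect any serious obstacle. The only point demanding a little care is the bookkeeping in the monomial computation — matching the even and odd exponent cases and the index shift in the moment recursion — or, if one follows the first route, checking that the integrability hypothesis of (\ref{ibpformula}) is met for the test function $fg$, which is automatic since $fg$ is a polynomial and $X\sim N(0,1)$ has finite moments of every order.
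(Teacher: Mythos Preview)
Your proposal is correct, and your second route (reducing by bilinearity to monomials and matching via the Gaussian moment recursion) is essentially the paper's own proof. Your first route via the product rule and an appeal to (\ref{ibpformula}) is a valid alternative that the paper does not use, but it leads to the same identity with no extra work.
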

\begin{proof}
By linearity, it suffices to consider the monomials $f(x)=x^n,$  $g(x)=x^m$.
	We have
	\begin{align*}
	\mathbb{E}\bigl[ X^n D X^m \bigr] = (m-1) \mathbb{E}\bigl[ X^{n+m-1} \bigr] = 
	\left \{ \begin{matrix}  (n+m-2){!}{!} (m-1),   &  (n+m)\mbox{ odd, }  \\
	0,  &  (n+m) \mbox{ even,  }  \\
	\end{matrix}
	\right . 
	\end{align*}
	which coincides with $\mathbb{E}\left[ X^m \delta X^n \right] = \mathbb{E} [ X^{m+n+1} ] -n \mathbb{E} [ X^{m+n-1} ].$ 
\end{proof}
\begin{mydef} We define the univariate Hermite polynomials as $H_0(x)=1$,  $H_1(x)=x$,  and  
	\begin{equation*} H_n(x) =\delta  H_{n-1}(x) = \bigl (\delta^n 1 \bigr) (x), \quad n \ge 2.
	\end{equation*} 
\end{mydef}
\begin{prop}[Properties of Hermite polynomials]
	\begin{enumerate}
		\item 
		$H_0(x),\dots, H_n(x)$ are monic polynomials spanning $\mathbb{K}_n[x]$ (the ring of polynomials of maximum degree $n$).
		\item 
		$\delta^{-1} H_n(x) = H_{n-1}(x) {\bf 1}(n>0)$.
		\item  Hermite polynomials are orthogonal in the sense that for $X \sim N(0,1)$ it holds that 
		\begin{equation*}
		\mathbb{E}\bigl[ H_m(X) H_n(X) \bigr]= 
		\mathbb{E}\bigl[ H_m(X) (\delta^{n} 1 ) (X) \bigr]=\mathbb{E}\bigl[ 1 \; \partial_n H_m(X) \bigr ] = \left \{
		\begin{matrix}  n!, &  n=m,  \\
		0,  &  \mbox{ otherwise. } \\
		\end{matrix} \right .
		\end{equation*}
		\item For $n \ge 1$, $D H_n(x)= \partial H_n (x)= n H_{n-1}(x)$.
		\item For every $0 \le m \le n$, and $X \sim N(0,1)$, 
		\begin{equation*} 
		  \mathbb{E}\bigl[ X^n H_m(X) \bigr] = {\bf 1}( (n-m)\in 2\mathbb{N} ) \frac{ n ! (n-m-1){!}{!} }{ (n-m)! }.
		\end{equation*}
	\end{enumerate}
\end{prop}
\begin{mydef} \label{def:Gamma_Y-dimension1}
	For a given (target) polynomial $y=h(x)$,  with $h\in \mathbb{K}[x]$, we introduce the linear operator $\Gamma_y$ acting on $\mathbb{K}[x]$ as 
	\begin{equation} \label{GammaY:1d} 
	\Gamma_y( f(x) ) = D h(x)  \delta^{-1}(f(x) ) =  \partial h(x) \delta^{-1}(f(x) ) \in \big \langle \partial h(x) \big\rangle.
	\end{equation}
\end{mydef}
\begin{lem}
Let $X \sim N(0,1)$. Consider $Y=h(X)$ where $h \in \KK[x]$. Assume that $f,g \in \mathbb{K}[x]$.	Then
	\begin{equation*}
	\mathbb{E}\bigl[ g( Y) f(X) \bigr] -\mathbb{E} [ g(Y)] \mathbb{E} [f(X) ] = \mathbb{E}\bigl[  \partial g(Y) \Gamma_Y( f(X) ) \bigr]. 
	\end{equation*}
\end{lem}

\begin{proof} Using Gaussian integration by parts formula Lemma \ref{lem:GIBP-d=1}, we can write 
\begin{align*}
	\mathbb{E}\bigl[ g( Y) f(X) \bigr] -\mathbb{E} [ g(Y)] \mathbb{E} [f(X) ] =\mathbb{E}\bigl[ g( h(X) ) \delta \delta^{-1}(  f(X) ) \bigr]  
		= \mathbb{E}\bigl[  \partial (g \circ h) (X)  \delta^{-1} f(X) \bigr]
\end{align*}	
\end{proof}
\subsection{Multidimensional case}
\begin{mydef}
	In the multivariate setting, we define  the Malliavin operators as follows. (a) The Malliavin derivative maps a polynomial $g\in \mathbb{K}[x_1,\dots,x_d]$ into its gradient:
	\begin{equation*}D_{\bullet} g = (D_k g)_{1\le k \le d} 
	 =  \nabla g =  ( \partial_{x_1} g(x_1,\ldots,x_d),\dots, \partial_{x_d} g(x_1,\ldots,x_d) ) \in \mathbb{K}[x_1,\dots, x_d]^d.
	\end{equation*}
(b)	The divergence $\delta$ maps in the opposite direction  a 
	polynomial vector $f(x)= f_{\bullet}(x_1,\ldots,x_d)=( f_1(x_1,\ldots,x_d),\dots, f_d(x_1,\ldots,x_d) )\in \mathbb{K}[x_1,\dots, x_d]^d$ into the polynomial
	\begin{equation*}
	\delta f_{\bullet}(x) = \sum_{k=1}^d \delta_k f_k(x) = \sum_{k=1}^d \bigl( f_k(x) x_k - \partial_{x_k} f_k( x) \bigr)  \in \mathbb{K}[x_1,\dots, x_d],
	\end{equation*}
	where $\delta_k$ denotes the univariate divergence operator operating on the $k$-th coordinate. 	
\end{mydef}

\begin{prop}
For $X \sim  N(0,I_d)$, a standard $d$-dimensional Gaussian random vector with covariance matrix the $d\times d$ identity matrix $I_d$, we have the multivariate Gaussian  integration by parts formula
	\begin{equation*}
	\mathbb{E}\big[  g(X) \delta f_{\bullet}(X) \big] = \mathbb{E}\Big[  \big(  D_{\bullet} g(X) , f_{\bullet}(X)  \big)_{\mathbb{R}^d} \Big].
	\end{equation*}
Here, and elsewhere, $(\cdot,\cdot)_{\mathbb{R}^d}$ denotes the usual inner product on Euclidean space $\mathbb{R}^d$.
\end{prop}
In order  to define a pseudo-inverse of $\delta$ in the multivariate setting, we use the multivariate Hermite polynomials instead of the monomial basis.
\begin{mydef} \label{def:Malliavin-Operators-Higher-Dimension}
	(a) The Ornstein-Uhlenbeck operator, which maps $\mathbb{K}[x_1,\dots, x_d]$ into itself, is defined as $L:= - \delta D$.\\
(b)	For a $d$-dimensional multi-index $\alpha = (\alpha_1,\ldots,\alpha_d) \in \mathbb{N}^d_0$, the multivariate Hermite  polynomials
	\begin{equation*} 
	H_{\alpha}(x) = H_\alpha (x_1,\ldots,x_d)= \prod_{k=1}^d  H_{\alpha_k}(x_k)
	\end{equation*}
	are eigenfunctions of $L$ with respective eigenvalues  $-|\alpha|:=-\sum_{k=1}^d \alpha_k$. 
	It also follows that $\mathbb{E}\bigl[ H_{\alpha}(X)H_{\beta}(X) \bigr]= {\bf 1}(\alpha =\beta) \prod_{k=1}^d\alpha_k !$.\\
(c)	The linear mapping $L^{-1}$  operates on the multivariate Hermite polynomials as
	\begin{align*}
	L^{-1} H_{\alpha} (x)= \left \{ \begin{matrix}  0, & \mbox{ when  } |\alpha|=0, \\
	-  |\alpha|^{ -1} H_{\alpha}(x), & \mbox{otherwise.}
	\end{matrix} \right. 
	\end{align*}
	(d) A pseudo-inverse of $\delta$ is defined as $\delta^{-1}= -DL^{-1}$, with $\delta^{-1} 1 =0$, and,  for $|\alpha|>0$,
	\begin{equation*}
	\delta^{-1} H_{\alpha}(x) = |\alpha|^{-1 } DH_{\alpha}(x) =\biggl( 
	\frac{\alpha_1}{|\alpha| } H_{\alpha- e_1}(x)  , \dots , \frac{\alpha_d}{|\alpha| } H_{\alpha-e_d}(x)  \biggr),
	\end{equation*}
	where as usual $(e_i : i=1,\ldots,d)$ denote the standard basis  for the Euclidean space $\mathbb{R}^d$. It follows by definition that  $\delta \delta^{-1} H_{\alpha}(x) = H_{\alpha}(x) {\bf 1}( |\alpha|>0)$. 
\end{mydef}
\begin{mydef} The $n$-th polynomial chaos in the variables $x_1,\dots, x_d$  is the linear subspace $\Hbb_n[x]= \Hbb_n[x_1,\ldots,x_d] \subseteq \mathbb{K}_n[x]= \mathbb{K}_n[x_1,\ldots,x_d]$ generated by the multivariate Hermite polynomials $H_{\alpha}(x)$ with $|\alpha|=n$, and the  decomposition
	\begin{equation*}
	\mathbb{K}_n[x]= \mathbb{K} + \Hbb_1[x] + \dots + \Hbb_n[x]
	\end{equation*}
 is  orthogonal with respect to the $d$-dimensional standard Gaussian measure.
\end{mydef}
\begin{mydef}\label{def:Gamma_Y-higher-dimension}
	In the multivariate case, with (target) polynomial $y=h(x_1,\dots,x_d)$, and $h \in \mathbb{K}[x_1,\ldots,x_d]$,  we define the linear \emph{Gamma operator} $\Gamma_y$ as:  $(x=(x_1,\ldots,x_d))$
 \begin{equation} \label{GammaY}
	\Gamma_y( f(x)  ) =  \bigl( D y, \delta^{-1} f(x) \bigr)_{\R^d}  = \bigl( Dh(x) , - DL^{-1} f(x)  \bigr)_{\R^d}.
	\end{equation}
	For example, $\Gamma_y(1)=0$ and for a multivariate  Hermite polynomial
	\begin{align*} 
	\Gamma_y( H_{\alpha}(x) ) = \sum_{k=1}^d   \frac{ \alpha_k}{|\alpha| } H_{\alpha-e_k}( x )\partial_{x_k} h(x).  
	\end{align*}
	Note that  $\Gamma_y$ maps $\mathbb{K}[x]$ into
	the ideal $
	\langle \nabla  h(x)\rangle=\big \langle \partial_{x_1} h(x),\dots ,  \partial_{x_d} h(x) \big \rangle $
	generated by the gradient of the target polynomial.
\end{mydef}
\begin{lem} For $X =(X_1,\ldots,X_d) \sim N(0,I_d)$, $Y=h(X)$, with $h \in \mathbb{K}[x_1,\ldots,x_d]$, and every $f\in \mathbb{K}[x_1,\ldots,x_d],g \in \mathbb{K}[Y]$,
	\begin{align}  \label{integration:by:parts:ii}
	\mathbb{E}\bigl[ g(Y) f(X) \bigr] 
	= \mathbb{E}[g(Y)]  \mathbb{E}[ f(X) ] +\mathbb{E}\bigl [\partial g(Y) \Gamma_Y( f(X)) \bigr]. 
	\end{align}
\end{lem}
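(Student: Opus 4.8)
The plan is to reduce the multivariate identity \eqref{integration:by:parts:ii} to the divergence integration by parts formula stated earlier in the section, using the definition $\delta^{-1}=-DL^{-1}$ and the relation $\delta\delta^{-1}H_\alpha = H_\alpha\mathbf{1}(|\alpha|>0)$. By linearity in $f$ it suffices to treat $f=H_\alpha$ for a single multi-index $\alpha$, and since $g\in\KK[Y]$ it is enough to check the formula for monomials $g(y)=y^j$; in fact, since everything is linear in $g$ as well, I would simply carry $g$ along as a general element of $\KK[Y]$ and use the chain rule $\partial_{x_k}(g\circ h)(x) = (\partial g)(h(x))\,\partial_{x_k}h(x) = \partial g(Y)\,\partial_{x_k}h(x)$ at the relevant moment.

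First I would dispose of the case $|\alpha|=0$, i.e.\ $f\equiv 1$: then $\Gamma_Y(1)=0$ and $\E[g(Y)\cdot 1]=\E[g(Y)]\E[1]$, so the identity holds trivially. For $|\alpha|>0$, write $f(X)=H_\alpha(X)=\delta\delta^{-1}H_\alpha(X)$ using Definition \ref{def:Malliavin-Operators-Higher-Dimension}(d). Then apply the multivariate Gaussian integration by parts formula (stated just after Definition \ref{def:Malliavin-Operators-Higher-Dimension}'s predecessor, the divergence definition) with the scalar field $g(Y)=g(h(X))$ and the polynomial vector field $\delta^{-1}H_\alpha(X)$:
\begin{align*}
\E\bigl[g(Y)f(X)\bigr] &= \E\bigl[g(h(X))\,\delta\bigl(\delta^{-1}H_\alpha\bigr)(X)\bigr]
= \E\bigl[\bigl(D_\bullet (g\circ h)(X),\ \delta^{-1}H_\alpha(X)\bigr)_{\R^d}\bigr].
\end{align*}
Now expand $D_\bullet(g\circ h)$ by the chain rule: $D_k(g\circ h)(x) = \partial g(y)\big|_{y=h(x)}\,\partial_{x_k}h(x)$, so that $D_\bullet(g\circ h)(X) = \partial g(Y)\,\nabla h(X) = \partial g(Y)\,D_\bullet h(X) = \partial g(Y)\,Dy\big|_{x=X}$. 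Substituting and pulling the scalar $\partial g(Y)$ out of the inner product gives $\E\bigl[\partial g(Y)\,(Dy,\ \delta^{-1}H_\alpha(X))_{\R^d}\bigr] = \E\bigl[\partial g(Y)\,\Gamma_Y(H_\alpha(X))\bigr]$ by the definition \eqref{GammaY} of $\Gamma_y$. This is exactly the claimed right-hand side, except that I have silently dropped the term $\E[g(Y)]\E[f(X)]$; but for $|\alpha|>0$ we have $\E[f(X)]=\E[H_\alpha(X)]=0$ (orthogonality to $H_0\equiv 1$), so that term vanishes and the two sides agree. Combining with the $|\alpha|=0$ case and invoking linearity in $f$ (every polynomial is a finite $\KK$-linear combination of Hermite polynomials, by the chaos decomposition in the Definition following Definition \ref{def:Malliavin-Operators-Higher-Dimension}) finishes the proof.

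I don't anticipate a genuine obstacle here — the statement is essentially a bookkeeping consequence of the chain rule plus divergence integration by parts. The one point that needs a little care is the interchange of the constant-term bookkeeping: one must track that the ``$\E[g(Y)]\E[f(X)]$'' correction is precisely what accounts for the projection of $f$ onto the constants (the $|\alpha|=0$ component), since $\delta\delta^{-1}$ annihilates that component. The other mild point is purely a domain-of-validity issue: all expectations are finite because $g,f,h$ are polynomials and $X$ is Gaussian with all moments, so every integration by parts step is legitimate and no boundary terms appear; this is implicit in the algebraic setup of Section \ref{sec2} but worth a remark.
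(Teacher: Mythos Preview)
Your proof is correct and follows essentially the same approach as the paper: the paper (in the remark after Definition~\ref{def:Gamma_Y-dimension1} for $d=1$, and implicitly here) uses $\delta\delta^{-1}f(X)=f(X)-\E[f(X)]$ together with the divergence integration by parts and the chain rule to obtain the identity in one line. Your decomposition into Hermite basis elements $H_\alpha$ with a separate treatment of $|\alpha|=0$ is a slightly more explicit rendering of the same argument, not a different method.
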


\section{An algebraic formalism of Stein operators}\label{sec3}
Throughout this section, we adapt the following setting.
Let $d \ge 1$. Assume $X = (X_1,\ldots,X_d)$, where $X_1,\ldots,X_d$  are i.i.d$.$  standard Gaussian random variables.  Choose a polynomial $h \in \KK[x]=\KK[x_1,\ldots,x_d]$. We consider a  polynomial target random variable $Y$ in the Gaussian variates, namely that,  
\begin{equation}\label{eq:Polynomial-Target}
Y = h (X)=h(X_1,\ldots,X_d).
\end{equation}
Without loss of generality, we also assume that $\E[Y]=0$ (this assumption is always possible with a shift).   Furthermore,  the set $\KK[y] \cong \KK[h(x)]$ of all polynomials in the variable $y$ composed with the target polynomial  $h(x)$ is a subring of $\KK[x_1,\ldots,x_d]$. Denote by $H(Y)=L^2(\Omega,\sigma(Y),\gamma_d)$ the Hilbert space of all $Y$-measurable, square integrable random variables with respect to the $d$-dimensional standard Gaussian measure $\gamma_d$. Also, $H(Y)^{\perp}$ stands for the orthogonal complement of $H(Y)$.

\subsection{Forward Stein chain}\label{sec3.1}
We begin with the notion of a Stein chain. Our definition is quite abstract, but simply speaking, a Stein chain for the target random variable $Y$ is the sequence of polynomial coefficients of a Stein operator for $Y$ (when it exists); hence its name. This connection is made in Proposition \ref{prop:Chain-Leads-SteinOperator}. A benefit of this abstract definition is that it allows us to define the notation of an algebraic Stein chain in a natural manner. The distinction between general and algebraic Stein chains is important, because Stein operators whose polynomial coefficients form an algebraic Stein chain are easier to find; see Remark \ref{rem3.two}, item (i). 
\begin{mydef}[\textbf{Forward Stein chain}]\label{def:Stein-Chain}
Suppose that the target random variable $Y$ has the Gaussian polynomial form given by relation \eqref{eq:Polynomial-Target}. 
 \begin{itemize}
\item[(a)] A (general) \emph{Stein chain} of length $T \ge 1$ for $Y$ is a sequence $(p_t (y) \in \KK[y] : t=0,\ldots,T)$ such that: 
 $g_T + p_T (Y) \in H(Y)^{ \perp}$ where the sequence of Gaussian polynomials $g=(g_t \in \KK[X_1,\ldots,X_d]: t=0,\ldots,T)$ is recursively defined via 
\begin{align}
g_0 &=0, \nonumber \\
g_t &= \Gamma_Y (g_{t-1}+p_{t-1}(Y)), \quad t=1,\ldots,T, \label{eq:Stein-Cahin-Dynamic}
\end{align}
where the Malliavin Gamma mapping $\Gamma_Y$ is defined in  \eqref{GammaY:1d} and \eqref{GammaY} 
when $d =1$ and $d \ge 2$, respectively, and moreover $\E[g_t] = - \E[p_t(Y)]$ for $t=0,\ldots,T$. Let $SC(Y,T)$ denote the set of all Stein chains of length $T$, and let $SC(Y)= \bigcup_{T\ge 1} SC(Y,T)$ stand for the set of
all Stein chains of  finite length. 
\item[(b)] An algebraic Stein chain of length $T \ge1$ for  $Y$ is a Stein chain $(p_t (y) \in \KK[y] : t=0,\ldots,T)$ such that: $g_T + p_T (Y) \in \{ 0 \}$ almost surely. Let $ASC(Y,T)$ denote the set of all algebraic Stein chains of length $T$, and let $ASC(Y)= \bigcup_{T\ge 1} SC(Y,T)$ stand for the set of all algebraic Stein chains 
 of  finite length.
\end{itemize}
\end{mydef}

\begin{rem}\label{rem:Stein-Chain}
	\begin{enumerate}
		\item[(i)] The moment property $\E[g_t] = - \E[p_t(Y)]$ for $t=0,\ldots,T$ in Definition \ref{def:Stein-Chain} is immaterial, and can always be assumed due the fact that the linear  map $\Gamma_Y$ does not see constants, namely, $\Gamma_Y (f(X) +c) = \Gamma_Y (f(X))$ for any constant $c$. 
		\item[(ii)] We stress that the subring $\KK[Y]$ is not \emph{stable} under the linear mapping $\Gamma_Y$, meaning $\Gamma_Y (  \KK[Y]) \nsubseteq   \KK[Y]$. For example, a common candidate -- from the Malliavin-Stein perspective -- for the first polynomial entry in the Stein chain is $p_0 (Y) =cY$, $c\in \KK$. It can be readily seen that with the choices $Y=H_p (X)$, $p\ge 3$, and $p_0 (Y) = cY$ we have $g_1 \notin \KK[Y]$. Therefore, the most notable feature of a Stein chain or an algebraic Stein chain is that the final output of the linear machine \eqref{eq:Stein-Cahin-Dynamic}, either conditionally on the target variable $Y$ or without, is an element in the subring $\KK[Y]$.  
		\item[(iii)] Note that by the definition of the $\Gamma_Y$ operator, all the intermediate Gaussian polynomials $$g_t=g_t(X_1,\ldots,X_d)\in
		\big \langle \nabla h(X_1,\ldots,X_d) \big \rangle, \quad \forall \, t=0,\ldots,T.$$
		More importantly, when the Stein chain is algebraic,  $g_T \in \KK[Y]\cap \langle \nabla h(X_1,\ldots,X_d) \rangle$. On the other hand, $I= \KK[Y]\cap \langle \nabla h(X_1,\ldots,X_d) \rangle$ is an ideal of the subring $\KK[Y]$, and therefore it can be generated by one element (see Remark \ref{rem:PID}). In dimension $d=1$, for the targets of the form of a Gaussian Hermite polynomial, we present a prototypical element that generates  the corresponding ideal, see Proposition \ref{prop:higher-coeff} for details. 
	\end{enumerate}
\end{rem}

Let us continue with the following basic fact that tells us that the existence of a Stein chain of length $T$, in a natural way, leads to a polynomial Stein operator of order $T$ for the target random variable $Y$.

\begin{prop}\label{prop:Chain-Leads-SteinOperator}
Suppose that all the assumptions of Definition \ref{def:Stein-Chain} prevail. Then, the following statements hold: 
\begin{itemize}
	\item[(a)] For every $T \ge 1$, $ASC(Y,T) \subseteq SC (Y,T)$. Moreover, $ASC(Y) \subseteq SC(Y)$.
\item[(b)] Let $(p_t (y) \in \KK[y] \, : \, t=0,\ldots,T)$ be a forward Stein chain for the target random variable $Y$. Then $\mathcal{S} = \sum_{t=0}^{T} p_t \partial^t \in PSO(Y)$. Conversely, if $\mathcal{S} = \sum_{t=0}^{T} p_t \partial^t \in PSO(Y)$, then the sequence of polynomials $(p_t(y) \in \KK[y] \, :\, t =0,\ldots,T) \in SC (Y,T)$ is a forward Stein chain for $Y$ of length $T$.
\end{itemize} 
\end{prop}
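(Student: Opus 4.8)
The plan is to reduce everything to a single \emph{master identity} obtained by iterating the integration by parts formula \eqref{integration:by:parts:ii}. Part (a) is immediate: if $(p_t)_{t=0}^{T}\in ASC(Y,T)$ then $g_T+p_T(Y)=0$ almost surely, and since the zero element of $L^2$ belongs to every subspace, in particular $0\in H(Y)^{\perp}$; moreover the moment requirement $\E[g_t]=-\E[p_t(Y)]$ is word-for-word the same in the two definitions. Hence $(p_t)_{t=0}^{T}\in SC(Y,T)$, and taking unions over $T$ gives $ASC(Y)\subseteq SC(Y)$.

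For part (b) I would first record that, writing $G_t:=g_t+p_t(Y)$, the recursion \eqref{eq:Stein-Cahin-Dynamic} reads $G_t=\Gamma_Y(G_{t-1})+p_t(Y)$, the moment condition reads $\E[G_t]=0$, and --- because $\Gamma_Y$ maps $\KK[x_1,\dots,x_d]$ into itself and $p_t(Y)=p_t(h(X))$ --- each $G_t$ is a genuine polynomial in $X_1,\dots,X_d$. Consequently \eqref{integration:by:parts:ii}, applied with the polynomial $G_s$ in the role of ``$f(X)$'' and the function $\partial^s f$ in the role of ``$g$'', is available; strictly speaking one needs the version of \eqref{integration:by:parts:ii} valid for a $C^1$ integrand $g$ of at most polynomial growth rather than a polynomial, which is exactly why the class $\mathcal{F}_{\mathcal{S},Y}$ is defined as it is and which I would quote from the auxiliary lemmas of Appendix \ref{appendixc}. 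The key step is then the telescoping identity, proved by induction on $s=0,\dots,T-1$,
\[
\E\Big[\textstyle\sum_{t=0}^{s}p_t(Y)\,\partial^t f(Y)\Big]=\E\big[g_{s+1}\,\partial^{s+1}f(Y)\big],\qquad f\in\mathcal{F}_{\mathcal{S},Y},
\]
where the inductive step adds $\E[p_s(Y)\partial^s f(Y)]$ to the identity at level $s-1$ (the base case $s=0$ using $g_0=0$) to obtain $\E[G_s\,\partial^s f(Y)]$, and then invokes \eqref{integration:by:parts:ii} together with $\E[G_s]=0$ and $\Gamma_Y(G_s)=g_{s+1}$. Taking $s=T-1$ and adding $\E[p_T(Y)\partial^T f(Y)]$ yields the master identity
\[
\E[\mathcal{S}f(Y)]=\E\big[(g_T+p_T(Y))\,\partial^T f(Y)\big]=\E\big[G_T\,\partial^T f(Y)\big],\qquad f\in\mathcal{F}_{\mathcal{S},Y}.
\]

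Both implications of (b) then follow quickly. For a forward Stein chain one has $G_T\in H(Y)^{\perp}$, i.e.\ $\E[G_T\mid Y]=0$; since $\partial^T f(Y)$ is $\sigma(Y)$-measurable and $\mathcal{S}f(Y)\in L^1(Y)$ --- finiteness of $\E|Y^j\partial^t f(Y)|$ for $j\le m$, $t\le T$, being built into $\mathcal{F}_{\mathcal{S},Y}$ --- the tower property applied to the master identity gives $\E[\mathcal{S}f(Y)]=\E[\partial^T f(Y)\,\E[G_T\mid Y]]=0$, so $\mathcal{S}\in PSO(Y)$. Conversely, suppose $\mathcal{S}\in PSO(Y)$. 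Testing $\E[\mathcal{S}f(Y)]=0$ against the monomials $f(y)=y^{s}$, $s=0,\dots,T$ (which lie in $\mathcal{F}_{\mathcal{S},Y}$ because $Y$ has all moments), produces exactly \eqref{eq:Stein-Chain-Moment-Condition}, hence the moment condition $\E[g_t]=-\E[p_t(Y)]$ by Remark \ref{rem:Stein-Chain}(i); to conclude it remains to show $G_T\in H(Y)^{\perp}$, i.e.\ $\E[G_T\mid Y]=0$. Given an arbitrary $\phi\in C^{\infty}_c(\R)$, its $T$-fold antiderivative $f(x)=\frac{1}{(T-1)!}\int_0^x(x-s)^{T-1}\phi(s)\,\mathrm{d}s$ is smooth, satisfies $f^{(T)}=\phi$, and has at most polynomial growth together with all of its derivatives, so $f\in\mathcal{F}_{\mathcal{S},Y}$; the master identity then gives $\E[G_T\,\phi(Y)]=\E[\mathcal{S}f(Y)]=0$, and letting $\phi$ range over $C^{\infty}_c(\R)$ forces $\E[G_T\mid Y]=0$. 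This shows $(p_t)_{t=0}^{T}\in SC(Y,T)$ and completes (b).

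The main obstacle is analytic rather than algebraic: everything hinges on the master identity, which requires that the Gaussian integration by parts \eqref{integration:by:parts:ii} genuinely survives the replacement of the polynomial test function by the non-polynomial derivatives $\partial^s f$ appearing in the telescoping, and that every expectation written down is absolutely convergent. This is precisely what the definition of $\mathcal{F}_{\mathcal{S},Y}$ (the $L^1$ bounds $\E|Y^j f^{(t)}(Y)|<\infty$ for all relevant $j,t$, together with $C^T$ smoothness) and the auxiliary lemmas of Appendix \ref{appendixc} are designed to secure; granting these, the inductive telescoping and the two short deductions above go through without further difficulty.
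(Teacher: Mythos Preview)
Your proof is correct and follows essentially the same route as the paper: both arguments iterate the integration by parts formula \eqref{integration:by:parts:ii} to obtain the telescoping (``master'') identity $\E[\mathcal{S}f(Y)]=\E[(g_T+p_T(Y))\,\partial^T f(Y)]$, and then read off both directions of (b) from it. Your version is in fact more explicit than the paper's in two places: you verify the moment condition \eqref{eq:Stein-Chain-Moment-Condition} in the converse direction by testing against monomials before invoking the master identity (the paper tacitly relies on Remark~\ref{rem:Stein-Chain}(i) here), and you give a concrete density argument via the $T$-fold antiderivative of $\phi\in C_c^\infty$, whereas the paper simply writes ``using a standard density argument''.

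One small correction: your appeal to ``the auxiliary lemmas of Appendix~\ref{appendixc}'' for the extension of \eqref{integration:by:parts:ii} from polynomial to smooth, polynomially-growing test functions is misplaced --- Appendix~\ref{appendixc} contains only Lemma~\ref{lemma:conditional:expectation} on conditional expectations, not an integration-by-parts extension. The paper itself uses \eqref{integration:by:parts:ii} for non-polynomial $g$ without comment, so this is a shared informality; but you should either justify the extension directly (it is standard Gaussian integration by parts, valid as soon as $g\in C^1$ with $\E|g'(Y)\Gamma_Y(f(X))|<\infty$) or drop the reference to Appendix~\ref{appendixc}.
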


\begin{proof} (a) This is clear because $0 \in H(Y)^{\perp}$. (b) Using iteratively the  integration by parts formula   \eqref{integration:by:parts:ii}, along with the moment property  $\E[g_t + p_t(Y)]=0$, for $t=0,\ldots,T$, in Definition \ref{def:Stein-Chain} (in particular, $\E[p_0(Y)] = g_0 = 0$), for every $f\in \mathcal{F}$, we can write 
	\begin{align*} &
	\E\biggl[  \bigl( p_0(Y)-\E[ p_0(Y) ]  \bigr) f(Y)  \biggr] = 
	\E\biggl[   \partial f(Y) \underbrace{ \Gamma_Y\bigl( p_0(Y)+g_0 \bigr) }_{ g_1} \biggr]=  \\ &
	-\E\bigr[  p_1(Y) \partial f(Y)   \bigr]
	+ \E\biggl[   \partial^2 f(Y) \underbrace{\Gamma_Y( p_1(Y)+g_1\bigr)  }_{ g_2 } \biggr] = \\ &
	-\E[  p_1(Y) \partial f(Y)   \bigr]
	-\E[ p_2(Y) \partial^2 f(Y)   \bigr] -
	\dots
	-\E[  p_{T-1}(Y) \partial^{T-1} f(Y)   \bigr]  \\ &
	+\E\biggl[   \partial^T f(Y) \underbrace{\Gamma_Y\bigl( p_{T-1}(Y)+g_{T-1} \bigr)  \bigr) }_{ g_T } \biggr]. 
	\end{align*} 
	Hence, $\E\bigl[\sum_{t=0}^T p_t(Y) \partial^t f(Y) \bigr] =\E[ p_0(Y) ]\E[f(Y) ]=0$,
	and therefore, the operator $\mathcal{S} = \sum_{t=0}^{T} p_t \partial^t$ is a polynomial Stein operator for the target random variable $Y$. For the other direction, let $\mathcal{S}=\sum_{t=0}^{T} p_t \partial^t \in PSO (Y)$ be a polynomial Stein operator for $Y$. Then the condition $\E[\mathcal{S}f (Y)]=0$ for all $f \in \mathcal{F}$ together with an iterative use of the  integration by parts formula yields that  $\E\left[ \partial^T f(Y) \left( p_T(Y) + g_T \right) \right]=0$ for every $f \in \mathcal{F}$. Hence, using a standard density argument we can conclude that $\E \left[ p_T (Y) + g_T \, \vert \, Y \right]=0$, and so $g_T + p_T (Y) \in H(Y)^{\perp}$. 
\end{proof}

Proposition \ref{prop:Chain-Leads-SteinOperator}, item (b), provides a neat correspondence between $PSO(Y)$ and $SC(Y)$. This useful fact in addition to the novel notion of an algebraic Stein chain introduced in Definition \ref{def:Stein-Chain}, item (b), directs us to the notion of an algebraic polynomial Stein operator that is the cornerstone of our paper.  
\begin{mydef}[\textbf{Algebraic polynomial Stein operator}]\label{def:algebraic-Stein-Operator}
	Let $d\ge 1$, and suppose that the target random variable $Y$ has the Gaussian polynomial form given by relation \eqref{eq:Polynomial-Target}. Let $T \ge 1$, and $\mathcal{S} = \sum_{t=0}^{T}p_t \partial^t \in PSO (Y)$. We say that $\mathcal{S}$ is an \textit{algebraic} polynomial Stein operator of length $T$ for target random variable $Y$ when the sequence of polynomials $(p_t(y) \in \KK[y] : t=0,\ldots,T) \in ASC(Y,T)$. We also denote by $APSO(Y)$ the class of all algebraic polynomial Stein operators for the target random variable $Y$.
	\end{mydef}

\begin{prop}\label{prop:Algebraic_Stein_Operators}
	Fix $d\ge 1$. Assume that the target random variable $Y$ has the Gaussian polynomial form given by relation \eqref{eq:Polynomial-Target}. Then, the following statements are in order:
	\begin{itemize}
		\item[(a)] $APSO(Y) \subseteq PSO(Y)$.
		\item[(b)] When $d=1$, we have $APSO(Y)=PSO(Y)$ (or equivalently $ASC(Y,T)=SC(Y,T)$ for every $T\ge 1$).
	\end{itemize}
	\end{prop}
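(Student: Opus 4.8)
The first statement is immediate: by Definition~\ref{def:algebraic-Stein-Operator} an algebraic polynomial Stein operator is in particular a polynomial Stein operator, so $APSO(Y)\subseteq PSO(Y)$ for every $d$; equivalently this is Proposition~\ref{prop:Chain-Leads-SteinOperator}(a) together with the chain/operator dictionary of Proposition~\ref{prop:Chain-Leads-SteinOperator}(b). For part (b) the same dictionary reduces the claim to the inclusion $SC(Y,T)\subseteq ASC(Y,T)$ for every $T\ge1$, i.e.\ that in dimension one every forward Stein chain is automatically algebraic. So fix a Stein chain $(p_t(y):t=0,\dots,T)\in SC(Y,T)$, with associated Gaussian polynomials $g_0=0,g_1,\dots,g_T$; set $q_t:=g_t+p_t(Y)\in\KK[X]$, so that $q_0=p_0(Y)\in\KK[Y]$, $g_{t+1}=\Gamma_Y(q_t)$, and, by the chain condition together with Remark~\ref{rem:Stein-Chain}(i), $\E[q_t]=0$ for all $t$ while $q_T\in H(Y)^{\perp}$. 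We must show $q_T=0$.

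Two reductions organise the argument. First, since $\psi(Y)\in H(Y)\cap H(Y)^{\perp}$ forces $\E[\psi(Y)^2]=0$, one has $\KK[Y]\cap H(Y)^{\perp}=\{0\}$; hence it suffices to show $q_T\in\KK[Y]$, equivalently (as $p_T(Y)\in\KK[Y]$) that $g_T$ lies in the subring $\KK[Y]\subseteq\KK[X]$. Second, iterating the integration by parts identity \eqref{integration:by:parts:ii} one gets the purely algebraic description
\[
H(Y)^{\perp}\cap\KK[X]=\bigl\{\,f\in\KK[X]:\E[\Gamma_Y^{\,j}(f)]=0\ \text{for all }j\ge0\,\bigr\},
\]
the key point being that a test polynomial of degree $D$ only involves the iterates $\Gamma_Y^{0}f,\dots,\Gamma_Y^{D}f$, so no density of polynomials in $L^2(\sigma(Y))$ is required — which matters because the targets $Y=h(X)$ at hand need not be moment-determined. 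Now comes the feature special to $d=1$: by \eqref{GammaY:1d}, $\Gamma_Y(\cdot)=h'(X)\,\delta^{-1}(\cdot)$ maps $\KK[X]$ into itself — in fact into the principal ideal $\langle h'(X)\rangle$ — and, since the univariate $\delta^{-1}$ kills only the constants and $h'\not\equiv0$, the kernel of $\Gamma_Y$ on $\KK[X]$ is exactly $\KK$. Consequently all intermediate objects are genuine polynomials and, iterating $g_{t+1}=\Gamma_Y(q_t)$, every possible final state lies in $\mathcal R_T:=\sum_{j=0}^{T}\Gamma_Y^{\,j}(\KK[Y])$. The claim $q_T=0$ will therefore follow from the identity
\[
\mathcal R_T\cap H(Y)^{\perp}=\{0\}\qquad(d=1).
\]

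I would prove this last identity by induction on $T$. Writing $q_T=\Gamma_Y(q_{T-1})+p_T(Y)$ with $q_{T-1}\in\mathcal R_{T-1}$: if $q_{T-1}\in H(Y)^{\perp}$, the induction hypothesis applied to $q_{T-1}\in\mathcal R_{T-1}\cap H(Y)^{\perp}$ gives $q_{T-1}=0$, hence $g_T=\Gamma_Y(0)=0$ and, as $\E[g_T\mid Y]=-p_T(Y)$ is then identically $0$, $q_T=0$. The substantial case is when $q_{T-1}$ has a nonzero component in $H(Y)$; here one must exploit the near-injectivity of $\Gamma_Y$ on $\KK[X]$ (kernel only $\KK$) together with the fact that, a Stein chain existing, $\E[g_T\mid Y]=\E[\Gamma_Y(q_{T-1})\mid Y]$ is a \emph{polynomial} in $Y$, in order to conclude that $g_T=\Gamma_Y(q_{T-1})$ is itself a polynomial in $Y$, whence $q_T\in\KK[Y]\cap H(Y)^{\perp}=\{0\}$. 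This implication — that mere polynomiality of the conditional expectation of the reachable random variable $g_T$ already forces $g_T$ to be a polynomial in $Y$ — is the heart of the matter and the step I expect to require real work. It is also genuinely one-dimensional: when $d\ge2$ the pseudo-inverse $\delta^{-1}$ of the divergence is far from unique (cf.\ Example~\ref{ex:pseudo-inverses-NOT-equal}), $\Gamma_Y$ acquires a large kernel and no longer carries the subring $\KK[Y]$ nicely, and $\mathcal R_T\cap H(Y)^{\perp}$ may be nonzero — which is exactly why the inclusion $APSO(Y)\subseteq PSO(Y)$ is strict in general.
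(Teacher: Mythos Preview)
Your reduction in part (b) to the identity $\mathcal R_T\cap H(Y)^{\perp}=\{0\}$ is correct and is essentially the content of the paper's proof. However, the mechanism you propose for establishing it has a genuine gap. The implication you isolate as ``the heart of the matter'' --- that $\E[g_T\mid Y]\in\KK[Y]$ forces $g_T\in\KK[Y]$ --- is simply false, even for $g_T\in\langle h'(X)\rangle$: take $h(x)=x^2-1$, so $h'(x)=2x$, and $g_T(X)=2X$; then $\E[2X\mid Y]=0\in\KK[Y]$ but $2X\notin\KK[Y]$. Your induction scheme also does not get off the ground: the Stein chain hypothesis constrains only $q_T$, not $q_{T-1}$, so the case ``$q_{T-1}\in H(Y)^{\perp}$'' carries no information and the ``substantial case'' is the only case. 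The near-injectivity of $\Gamma_Y$ (kernel $=\KK$) that you invoke does not by itself rule out the above counterexample.

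The paper proceeds quite differently. The key input is Lemma~\ref{lemma:conditional:expectation}, which uses the Lindemann--Weierstrass theorem to show that for $d=1$ the condition $\E[g(X)\mid h(X)]=0$ forces either $g\equiv 0$ or ($h$ even and $g$ odd). One then checks directly that when $h$ is even, the operator $\Gamma_Y=h'(X)\,\delta^{-1}$ preserves parity (even $\mapsto$ even, since $h'$ is odd and $\delta^{-1}$ flips parity), so every element of $\mathcal R_T$ --- in particular $g_T+p_T(h(X))$ --- is an \emph{even} polynomial. Being simultaneously even and odd, it must vanish. This parity observation is exactly the structural property of $\mathcal R_T$ that your argument is missing; without it (or without Lemma~\ref{lemma:conditional:expectation}) there is no way to close the gap you identified.
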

\begin{proof} (a) Obvious.
(b) Let  $\mathcal{S} = \sum_{t=0}^{T}p_t \partial^t  \in PSO(Y)$. Then by definition, $$\E  \left[  g_T (X)  + p_T (Y)  \, \big \vert   \, Y  \right]= \E  \left[   g_T (X)  + p_T (h(X))  \, \big \vert   \, h(X)  \right]=0.$$
Then, Lemma \ref{lemma:conditional:expectation} implies that  either $g_T (X)+p_T(h(X))=0$ (that means $\mathcal{S} \in APSO(Y)$, and so nothing left to prove) or $h(X)$ is an even polynomial and  $g_T(X) + p_T (h(X))$ is  an odd polynomial. Next, first note that for $h$ being even, and any polynomial $p \in \KK[X]$, the composition polynomial $p(h(X))$ is  even. 
Next, we show that $g_T (X)=  \sum_{t=0}^{T-1}  \Gamma^{T-t}_Y p_t(h(X))$ is an even polynomial too.  Note that, linearity of the $\Gamma_Y$ operator allows us to write 
\begin{align}\label{eq:inside-component-symmetric}
 \sum_{t=0}^{T-1}  \Gamma^{T-t}_Y p_t(h(X))  =  \Gamma_Y  \Big(\sum_{t=1}^{T}  \Gamma_Y^{T-t}  \left(  p_{t-1} (h(X))    \right)      \Big).
\end{align}
Next, by definition of the $\Gamma_Y$ operator we can write $\Gamma_Y(g(X)) = h'(X) \delta^{-1} g(X)$ for every given polynomial $g\in\KK[X]$. Hence, the $\Gamma_Y$ operator maps an even polynomial $g(X)$ to a product of two odd polynomials, and $\Gamma_Y (g(X))$ is also even since $h(X)$ is an even polynomial. The above reasoning yields that the polynomial argument inside the $\Gamma_Y$ operator in the right hand side \eqref{eq:inside-component-symmetric}, which is given by  
\[ r(X): = p_{T-1}(h(X))  + \ldots+  \Gamma^{T-1}_Y (p_0 (h(X))),  \]
is even, 
and $\Gamma_Y (r (X))$ is  even as well.
Therefore, 
the polynomial $g_T(X) + p_T (h(X)) $, being both even  and odd,
 is necessarily zero, i.e$.$  
$\mathcal{S}  \in APSO(Y)$.  
\end{proof}

\if 0   THIS PROOF WAS NOT CORRECT
(b) Let $\mathcal{S}=\sum_{t=0}^{T} p_t \partial^t \in PSO(Y)$. Then, 	Lemma \ref{lem:Leading-Coff_Contain_h'(X)} tells us that $p_T(Y)= p_T(h(X))= h'(X)\tilde{p}(X) \in \langle h'(X) \rangle$ for some polynomial $\tilde{p} \in \KK[X]$. Next, by using relation \eqref{eq:General-First-Order-ODE-Gaussian} we obtain that 
	\begin{align}  
	\E \Big[  \left(  g_{T-1}(X)+p_{T-1}(h(X)) \right) f(X) + \tilde{p}(X)  f'(X)   \Big]=0 \end{align}
	for all absolutely continuous functions $f:\R \to \R$ such that $\E \vert \tilde{p}(X) f'(X) \vert < +\infty$. Now, Lemma \ref{lem:Gaussian-All-Algebraic} yields that 
	\begin{align}\tilde{p}(X) + \Gamma_X \left(   g_{T-1}(X)+p_{T-1}(h(X)) \right)=0, \, a.s.
	\end{align}
	We now multiply both sides by $h' (X)$ to obtain, almost surely, that
	\begin{align*} 
	0 &=h'(X) \tilde{p}(X) + h'(X)\Gamma_X \left(  g_{T-1}(X)+p_{T-1}(h(X)) \right) \\
	&= p_T (h(X)) + \Gamma_Y \left(  g_{T-1}(X)+p_{T-1}(h(X)) \right)\\
	&	=   p_T (h(X)) + g_T (X) = p_T (Y)+g_T(X),
	\end{align*}
which completes the proof.	
\end{proof}
\fi

\begin{cor}\label{cor:Leading-Coff_Contain_h'(X)}
	Let $X \sim N(0,1)$, and  $Y = h(X)$ so that $h\in\KK[X]$ with $\deg{h} \ge 2$ and $\E[h(X)]=0$. Assume $\mathcal{S} = \sum_{t=0}^{T}p_t \partial^t \in PSO(Y)$ is a polynomial Stein operator for the target  $Y$. Identify $\KK[y] \cong \KK[h(x)]$. Then necessarily, $p_T (y) \in \KK[y] \cap \langle h'(x) \rangle$. 
\end{cor}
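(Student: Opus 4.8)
The plan is to read the conclusion off the two structural facts already established in the $d=1$ case: the identification $PSO(Y)=APSO(Y)$ from Proposition \ref{prop:Algebraic_Stein_Operators}(b), and the ideal‑membership of the intermediate Gaussian polynomials recorded in Remark \ref{rem:Stein-Chain}(iii). So the corollary should follow with essentially no new computation.

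First I would argue that the coefficient sequence is an \emph{algebraic} Stein chain. Since $d=1$ and $\mathcal{S}=\sum_{t=0}^{T}p_t\partial^t\in PSO(Y)$, Proposition \ref{prop:Algebraic_Stein_Operators}(b) gives $\mathcal{S}\in APSO(Y)$, i.e.\ $(p_t(y):t=0,\ldots,T)\in ASC(Y,T)$. By Definition \ref{def:Stein-Chain}(b) this means that the terminal Gaussian polynomial $g_T$ produced by the recursion \eqref{eq:Stein-Cahin-Dynamic} satisfies $g_T(X)+p_T(h(X))=0$ almost surely; since $X$ has a Lebesgue density, a polynomial vanishing on a set of positive measure is the zero polynomial, so this is an identity in $\KK[x]$, namely $p_T(h(x))=-g_T(x)$.

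Next I would locate $g_T$ inside the ideal $\langle h'(x)\rangle$. By Definition \ref{def:Gamma_Y-dimension1}, for every $f\in\KK[x]$ one has $\Gamma_Y(f(x))=h'(x)\,\delta^{-1}(f(x))\in\langle h'(x)\rangle$. Since $g_0=0$ and $g_t=\Gamma_Y(g_{t-1}+p_{t-1}(Y))$ for $t\ge 1$, an immediate induction gives $g_t(x)\in\langle h'(x)\rangle$ for all $t=1,\ldots,T$; in particular $g_T(x)\in\langle h'(x)\rangle$, which is exactly the content of Remark \ref{rem:Stein-Chain}(iii). Combining this with $p_T(h(x))=-g_T(x)$ yields $p_T(h(x))\in\langle h'(x)\rangle$. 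As $p_T(y)\in\KK[y]$ by hypothesis, and the identification $y=h(x)$ embeds $\KK[y]$ as the subring $\KK[h(x)]\subseteq\KK[x]$ (injectively, since $h$ is non‑constant), we conclude $p_T(y)\in\KK[y]\cap\langle h'(x)\rangle$. The hypothesis $\deg h\ge 2$ plays no role in the argument; it merely guarantees that $\langle h'(x)\rangle$ is a proper ideal, so that the conclusion carries information.

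I do not anticipate a genuine obstacle: the work is entirely done by Proposition \ref{prop:Algebraic_Stein_Operators}(b). The only point needing a word of care is the passage from ``$g_T+p_T(Y)=0$ a.s.''\ to a polynomial identity, handled above by the positive‑measure argument. If one preferred not to cite Proposition \ref{prop:Algebraic_Stein_Operators}(b) as a black box, one could instead re‑run its short parity argument directly: iterated integration by parts turns $\E[\mathcal{S}f(Y)]=0$ into $\E[g_T+p_T(h(X))\mid h(X)]=0$, and by Lemma \ref{lemma:conditional:expectation} this forces $g_T+p_T(h(X))=0$ unless $h$ is even and $g_T+p_T(h(X))$ is odd — which is impossible since $g_T$ (built from $\Gamma_Y$, i.e.\ from products of $h'$ with odd polynomials) and $p_T\circ h$ are both even when $h$ is even.
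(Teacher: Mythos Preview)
Your proof is correct and follows essentially the same route as the paper: invoke Proposition \ref{prop:Algebraic_Stein_Operators}(b) to obtain $g_T(X)+p_T(h(X))=0$, and then observe that $g_T$, being an output of $\Gamma_Y$, lies in $\langle h'(x)\rangle$. The paper writes $g_T$ explicitly as $\Gamma_Y\bigl(\Gamma_Y^{T-1}(p_0)+\cdots+p_{T-1}\bigr)$ and factors out $h'(X)$, while you phrase the same fact via the recursion and Remark \ref{rem:Stein-Chain}(iii); these are equivalent.
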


\begin{proof}
Proposition  \ref{prop:Algebraic_Stein_Operators}, item (b) implies that $\mathcal{S} = \sum_{t=0}^{T}p_t \partial^t \in APSO(Y)$. Hence $g_T(X) + p_T (h(X))=0$. On the other hand, note that 
\begin{align*} p_T (h(X))  &=  -  \Big(  \Gamma^T_Y (p_0) + \ldots+ \Gamma_Y (p_{T-1})  \Big) =  - \Gamma_Y   \Big(   \Gamma^{T-1}_Y (p_0) + \ldots+ p_{T-1}(X) \Big) \nonumber\\
& = -  \Big(  DY, \delta^{-1}  \left(  \Gamma^{T-1}_Y (p_0) + \ldots+ p_{T-1}(X)   \right)   \Big) \nonumber
\end{align*}
\begin{align*}
& =  -  h' (X)  \delta^{-1}  \left(  \Gamma^{T-1}_Y (p_0) + \ldots+ p_{T-1}(X)   \right)  \in \langle  h' (X)  \rangle.
\end{align*}
	\end{proof}
\begin{rem}\label{rem3.two}
	\begin{itemize}
\item[(i)] In several important settings (such as when $d=1$) algebraic polynomial Stein operators, when they exist, are easier to find compared to general polynomial Stein operators. This is due to the fact that one can settle the problem of finding an algebraic polynomial Stein operator in a completely algebraic framework -- using the algebraic operators that are introduced in Section \ref{sec2} -- to bypass the difficult problem of computing the conditional expectation in the last stage of the general Stein chain. In fact, hereafter, this is our major interest that at last successfully leads to many new polynomial Stein operators for Gaussian polynomial targets of very complex probabilistic nature, whose derivation is far beyond available techniques from the literature.     
\item[(ii)] For a Gaussian polynomial target random variable $Y$ of the form \eqref{eq:Polynomial-Target}, Proposition \ref{prop:Algebraic_Stein_Operators}, item (b), implies that 	$APSO(Y)=PSO(Y)$ when $d=1$. However, in general, 
\begin{equation}\label{eq:APSO=PSO}
APSO(Y) \neq PSO(Y),  \quad  \text{ when } \quad d\ge 2.
\end{equation}
 This phenomenon is discussed in the forthcoming example, in which the significant role of probability enters through the conditional expectation. As will become clear, in higher dimensions, with the target random variable $Y$ of the form \eqref{eq:Polynomial-Target}, the situation  $\E[g(X_1,\ldots,X_d)\, \vert\, Y ] = p(Y)$ most often happens for some polynomial $p\in\KK[Y]$, although $g(X_1,\ldots,X_d) \in \KK[X_1,\ldots,X_d] \setminus \KK[Y]$. Also, item (b) of the forthcoming example contains several examples of Gaussian polynomial target random variables $Y=h(X_1,\ldots,X_d)$ with $d\ge 2$ for which $APSO(Y) \neq \emptyset$. 
\end{itemize}
\end{rem}

\begin{ex}
\begin{itemize}
	\item[(a)] {\bf Case $d=1$:} Let $X \sim N(0,1)$. Although Proposition \ref{prop:Algebraic_Stein_Operators}, part (b), confirms that $APSO(Y)=PSO(Y)$, we would like to present some concrete illustrative examples for some known polynomial Stein operators.		
(i) When $Y=aX+b$, where $a,b\in\mathbb{R}$, it is clear that $APSO(Y)=PSO(Y)$.	(ii) Let $Y=aX^2+bX+c$, where $ a,b \neq 0$ and $a,b,c \in \R$. Note that $\E[Y]=a+c$. So,  from the Malliavin--Stein perspective, the most desirerable choice for the zeroth polynomial is $p_0 (y)=y - a-c$. Straightforward computations give that 
\begin{align*}
g_1(X)&= \Gamma_Y(p_0(Y))= \Gamma_Y(Y-a-c)=\Gamma_Y (Y)\\
&= 2a^2H_2(X)+3abH_1(X)+(2a^2+b^2),
\end{align*}
 and $\E[g_1(X)]=2a^2+b^2$. The requirement $a,b\neq 0$ yields that $g_1 (X) \neq p(Y)$ almost surely for any polynomial $p \in \KK[Y]$. Hence, we need to choose appropriately the next polynomial $p_1 (y) \in \KK[y]$ and move to the next stage. To do this, note that
\begin{align*}
\Gamma_Y (g_1 (X)) = 4a^3 H_2 (X) + 8a^2b H_1 (X) + (3ab^2+4a^3).
\end{align*}
We now aim to find a polynomial $p_1(y)$ that satisfies the requirement  $\E[g_1(X)]=-\E[p_1(Y)]$ and the terminating condition that $g_2=\Gamma_Y ( g_1 + p_1(Y)   )\in H(Y)^{ \perp}$. The polynomial $p_1(y) = -4a (y-c)+ (2a^2 -b^2)$ meets these requirements. It is easily checked that $\E[g_1(X)]=-\E[p_1(Y)]$ and we also have that
\begin{align*}
g_2(X)& =\Gamma_Y \left( g_1 (X)) + p_1(Y)   \right)  =  \Gamma_Y (g_1 (X) - 4a \Gamma_Y (Y)\\
&=\Gamma_Y (g_1 (X)) - 4a \Gamma_Y (p_0(Y))\\
&=\Gamma_Y (g_1 (X)) - 4a g_1 (X)\\
& = -4a^3H_2 (X)-4a^2bH_1(X) -4a^3 -ab^2\\
&= -4a^2 (Y-c) -ab^2.
\end{align*}
We now choose $p_2 (y)= 4a^2 (y-c)+ab^2$, and note that $g_2+p_2(y)=0$. Thus, the sequence $(p_0(Y),p_1(Y),p_2(Y))$ is an algebraic stein chain of length $T=2$ for the target random variable $Y$.
Finally, applying Proposition \ref{prop:Chain-Leads-SteinOperator} yields the Stein operator   	
\[ \mathcal{S} = \left( ab^2+4a^2(y-c) \right)\partial^2 + \left( 2a^2-b^2-4a(y-c) \right)\partial+ (y-c-a), \]
	which was achieved in \cite[Proposition 2.1]{gaunt-34} via a different approach.

	

	\item[(b)] {\bf General Case:} Let $\{ X_k \sim N(0,1) \, : \, k \in \N \}$ be a family of i.i.d$.$ standard Gaussian random variables. 
	
\vspace{2mm}
	
	(i) Consider the target random variables $	Y=\sum_{k=1}^{K} \alpha_k H_2 (X)$
	 (living in the second Wiener chaos) where the $\alpha_k$ are non-zero real numbers. Note that by choosing $\alpha_k =\alpha$ for $k=1,\ldots,K$, the random variable $Y$ boils down to that of the \textit{centered gamma} distribution with $K$ degrees of freedom, and by choosing $\alpha_k=\alpha,-\beta$ for $k=1,\ldots,r$, and $k=r+1,\ldots,2r (=K)$, respectively, where $\alpha  \beta > 0$, the random variable $Y$ boils down to that of the centered \textit{variance-gamma} distribution \cite{gaunt vg} with parameters $(r,\theta,\sigma)$, where $\theta=\alpha - \beta$ and $\sigma=2\sqrt{\alpha\beta}$.   In \cite{aaps19b}, the authors provided the following polynomial Stein operator of order $T$ with linear coefficients for the target random variable $Y$:
	\begin{equation}\label{eq:Stein_Operator_2Chaos}
	 \mathcal{S}  =  \sum_{k=1}^{T} (b_k - a_k y)\partial^k - a_0 y ,
	 \end{equation}
	where the constants $(a_k,b_k  : k=0,\ldots,T)$ are explicit and given in \cite[Section 2.1]{aaps19b}, and $T$ is the number of distinct coefficients in $(\alpha_k,\, k=1,\ldots,K)$.  Next, relying on \cite[Lemma 3]{azmooden}, relation (23), we can easily infer that $\mathcal{S} \in APSO(Y)$ is an algebraic polynomial Stein operator for $Y$. Up to now, only when $K=1$ (equivalently $d=1$, and corresponding to a scaled centered gamma random variable) do we know that $APSO(Y)=PSO(Y)$. For example, with the product Gaussian distribution $Y=X_1X_2$ we do not know whether $APSO(Y)=PSO(Y)$. 

\vspace{2mm}
	
	 (ii) The following example provides a polynomial Stein operator that is not algebraic with $d=2$. Consider the target $Y=X^2_1 X^2_2$, a product of two independent chi-square random variables, each with one degree of freedom. In \cite{gaunt-ngb}, the author provided the following second order polynomial Stein operator 
	\begin{equation}\label{eq:Stein_Operator_Product_Gamma}
	\mathcal{S}  = y^2 \partial^2 + 2y\partial +\frac{1}{4} (1-y) .
	\end{equation}
	Using some straightforward computations, one can see that 
\[g_1(X_1,X_2)= \Gamma_Y (p_0(Y)) = -\frac{1}{4} \Gamma_Y (Y) =  -\frac{ Y}{ 4} \biggl( H_2 (X_1) + H_2(X_2) + 4  \biggr)  \in \KK[X_1,X_2].\]
 Also, $g_2(X_1,X_2)$ contains a term in the eighth Wiener chaos of the form $H_6(X_1)H_2(X_2)$  that does not appear in the chaos expansion of $Y^2$. This yields that $g_2(X_1,X_2)+p_2(Y)=g_2(X_1,X_2)+Y^2 \neq 0 $ almost surely, and hence, $\mathcal{S}   \notin APSO(Y)$ is not an algebraic polynomial Stein operator for the target $Y$. It is an interesting problem to determine whether $APSO(Y) \neq \emptyset$. 
 
\vspace{2mm} 
 
	 (iii) The following example provides a polynomial Stein operator that is not algebraic with $d=3$. Let $Y=X_1X_2X_3$ be the product of three independent standard Gaussian random variables.  In \cite{gaunt-pn}, the author provides the following third order polynomial Stein operator for the target random variable $Y$:
	\[  \mathcal{S} = y^2 \partial^3 + 3y \partial^2 + \partial - y \in PSO(Y). \]
	Some tedious computations yield that 
	\begin{align*}
	g_1 (X_1,X_2,X_3) & = - \Gamma_Y (Y) \\
	&= -\frac{1}{3} \Big(  H_2 (X_2) H_2(X_3)+ H_2 (X_2)+ H_2(X_3)+1 \Big)\\
	& \quad  -  \frac{1}{3} \Big(  H_2 (X_1) H_2(X_3)+ H_2 (X_1)+ H_2(X_3)+1 \Big)\\
	&\quad -  \frac{1}{3} \Big(  H_2 (X_1) H_2(X_2)+ H_2 (X_1)+ H_2(X_2)+1 \Big) \in \KK[X_1,X_2,X_3],\\
	g_2(X_1,X_2,X_3)&= - \Gamma_Y (g_1 +p_1)= - \Gamma_Y (g_1 +1) = - \Gamma_Y (g_1) \\
	& = -\frac{1}{3} Y \Big( H_2(X_1)+H_2(X_2)+H_2(X_3) \Big)
 - 2 Y \in \KK[X_1,X_2,X_3],
\end{align*}
and
\begin{align*} 
	\Gamma_Y (g_2 (X_1,X_2,X_3)) &= -\Gamma_Y \left( \Gamma_Y (g_1) \right)\\
	& = -\frac{1}{15}\Big( H_2(X_1) X^2_2 X^2_3 + \frac{1}{3}  H_3(X_1) X_1  X^2_3 + \frac{1}{3} X_1 H_3(X_1) X^2_2 \Big)\\
	& \quad -\frac{1}{15}\Big( H_2(X_2) X^2_1 X^2_3 + \frac{1}{3}  H_3(X_2) X_2  X^2_3 + \frac{1}{3} X^2_1 X_2  H_3(X_3) \Big)\\
	& \quad -\frac{1}{15}\Big(  X^2_1 X^2_2 H_2(X_3) + \frac{1}{3} X^2_1  X_3  H_3(X_3) + \frac{1}{3} X^2_2 X_3  H_3(X_3) \Big)\\
	& \quad -\frac{1}{4} g_1(X_1,X_2,X_3) \in \KK[X_1,X_2,X_3].
	\end{align*}
	Hence, $g_3(X_1,X_2,X_3) +p_3(Y)= \Gamma_Y(g_2(X_1,X_2,X_3)) - 3 g_1(X_1,X_2,X_3) + Y^2 \neq 0$ almost surely, whilst $\E[g_3(X_1,X_2,X_3) + p_3(Y)\, \vert\, Y ] =0$ almost surely. In contrast to the product of two independent standard Gaussian random variables, we do not know yet whether the product of three or more independent standard Gaussian random variables admit any algebraic polynomial Stein operator. 

	\end{itemize}
	
	\end{ex}
	
\begin{rem} \label{rem:Stein-Chain-Space} 
	\begin{enumerate}
		\item[(i)] The following  algorithm can be used to produce the associated algebraic Stein chains:\\
		\hrule height 0.5pt depth 0.5pt width \textwidth
		\textbf{Algorithm} : Producing forward algebraic Stein chain
		\hrule height 0.5pt depth 0.5pt width \textwidth
		\begin{enumerate} 
			\item \textbf{At stage $t=0$}, pick a polynomial $p_0 \in \KK[Y]$ such that $\E[p_0(Y)]=0$, and set $g_0=-\E[ p_0(Y) ]=0$, 
			\item \textbf{At stage $t>1$}, with chosen polynomials $(p_0,p_1,\ldots,p_{t-1})$ in the subring $\KK[Y]$ satisfying moment property $\E[p_s(Y)]=-\E[g_s]$ for $s=0,\ldots,t-1$, if
			\begin{align*}
			g_t:= \Gamma_Y\bigl( g_{t-1} +  p_{t-1} (Y)  \bigr ) \in \KK[Y]
			\end{align*}
			set $ p_{t}(Y)=-g_t$ and stop.
			\item Otherwise, choose a polynomial $ p_{t}(Y)\in \KK[Y]$ with $\E[ p_{t}(Y) ]=-\E[g_t]$,
			and continue  to  stage $(t+1)$.
		\end{enumerate}
	\hrule height 0.5pt depth 0.5pt width \textwidth
		\item[(ii)] We have not yet explained how to choose the polynomials $p_t(Y)$
		and  how to determine whether algebraic Stein chains of finite length beginning with initial state $p_0(Y)$ do exist. This is in fact the topic of Section \ref{sec:Null}. The question of how to chose the initial state is also of interest. In the Malliavin-Stein method, it is often desirable to take $p_0(y)=cy$ as the zeroth-order term; however, other choices of initial state can lead to Stein operators with smaller degree $m$ or smaller order $T$  (see Table \ref{tablesummary}). Remark \ref{rem:MATLAB_code_General_0Order_Term} describes how initial states can be chosen to obtain such Stein operators.

\item[(iii)]	Instead of looking for polynomial coefficients in $\KK[Y]$,
we could restrict the coefficients to be in a linear subspace, as for example
\begin{align*}
\mathbb{L}=\KK_r[Y]=\bigl \{ p(Y) \, : \, \text{ polynomials with } \deg{(p)}\le r    \bigr \}.
 \end{align*}
 This observation is vital for our final goal of finding  an algebraic  Stein chain by implementing a \texttt{MATLAB} code (see Remark \ref{rem:Null-Issues}, item (ii)). We could also formulate the problem with a sequence of possibly different subspaces $\mathbb{L}_t \subseteq\KK[Y]$ at each stage $t\in \N$.   
  Note also that if
  \begin{align*}
  \big \langle \nabla h(X_1,\ldots,X_d) \big \rangle \cap \KK_r[Y]= \{ 0\},\end{align*}
  there cannot be any non-trivial  finite order algebraic polynomial Stein operator satisfying  $\deg p_t(y)\le r,$  $\forall t$. 
  \end{enumerate}
  \end{rem} 
  
 
  \if 0
  \begin{rem} In general the intersection on an ideal with a subring is an ideal of the subring,
   in particular  
  $\big \langle \nabla h(X_1,\ldots,X_d) \big \rangle \cap \KK[ Y ]$ is an ideal of   the  subring $\KK[ h(X) ]$.
  \end{rem} \fi
  \subsection{Backward Stein chain}\label{sec3.2}
We may also validate an algebraic Stein chain by
 starting from the highest order term and checking recursively the lower order terms. We illustrate this subsidiary approach only in the univariate case $d=1$ (see Remark \ref{rem:BSC_d>1} for the multidimensional case). We start from  the highest order derivative coefficient
 $p_T( Y )\in \langle \partial h(X) \rangle \cap \KK[Y]$
(if this condition is not satisfied, we stop with a negative answer: there cannot be an algebraic polynomial Stein operator with highest order coefficient $p_T(Y)$). Let 
\begin{equation}\label{eq:division}
q_T(X)=-p_T( h(X))/\partial h(X).
\end{equation}
 After  $t$ successful recursion steps, we should have
\begin{align} \label{backward:condition}
 \delta q_{T-t}(X) 
 \in \left( \Big \langle \partial h(X) \Big \rangle
 + \KK[Y] \right).
\end{align}
If this condition is not satisfied we stop with a negative answer; otherwise, there is a polynomial of the target
$p_{T-t-1}(Y)$ such that $\bigl( \delta q_{T-t}(X)-p_{T-t-1}(h(X)) \bigr) \in \langle \partial  h(X) \rangle$.
We set
\begin{align*}
q_{T-t-1}(X)= \bigl( \delta q_{T-t}(X)-p_{T-t-1}(h(X)) \bigr) /\partial h(X)
\end{align*}
and continue.
The algebraic backward Stein chain ends successfully after  $(T-1)$ backward steps when
\begin{align*}  \delta  q_{T}(X) =
p_{0}( h(X) )=p_{0}(Y)\in \KK[Y].
\end{align*}
In such case
\begin{align*} &
 \E\bigl[ p_0( Y) f(Y) \bigr] = 
 \E\bigl[ \delta( q_1(X) )  f(Y) \bigr] =  
 \E\bigl[ q_1(X)  \partial h(X)  \partial f(Y) \bigr]  & \\ &
 = \E\biggl[  \frac{ \delta q_2(X)-p_1(Y) } {\partial h(X) }\partial h(X) \partial f(Y) \biggr]= -\E\bigl[ p_1(Y) \partial f(Y) \bigr] + \E\bigl[   \delta q_2(X)\partial f(Y) \bigr]
 = \dots  & \\ & = -\E\bigl[ p_1(Y) \partial f(Y) \bigr] - \E\bigl[ p_2(Y) \partial^2 f(Y) \bigr] - \dots
  -\E\bigl[ p_{T-1}(Y) \partial^{T-1} f(Y) \bigr]\\
  &\quad + \E\bigl[ \delta q_T(X) \partial^{T-1} f(Y) \bigr],
 & \end{align*}
with 
\begin{align*}
\E[ \delta q_T(X) \partial^{T-1} f(Y) \bigr]=\E[ q_T(X) \partial h(Y) \partial^{T} f(Y) \bigr]=
-\E[ p_T(Y) \partial^{T} f(Y) \bigr],
\end{align*}
which means that ${\mathcal S}f(y)= \sum_{t=0}^T  p_{t}(y) \partial^t f(y)$
is an algebraic polynomial Stein operator for the target $Y=h(X)$.

\begin{rem}\label{rem:BSC_d>1} 
	In this remark $X=(X_1,\ldots,X_d)$. The backward construction of an algebraic Stein chain in the multidimensional case $(d>1)$ is essentially the same as in the univariate case $d=1$ with only the minor difference that at each backward step the divisor polynomials $q_{T-t}, \, t=0,\ldots,T-1$, are not unique, unlike the univariate case. For example, start with the polynomial coefficient $p_T(Y)$ of the highest order term that must be
in $\KK[Y]\cap \langle Dh(X) \rangle$. (If this condition is not valid, we know there is no algebraic polynomial Stein operator with the highest order coefficient term $p_T$. This condition can be checked by
the multivariate  polynomial division algorithm, using Gr\"obner basis.) The latter means there is some polynomial vector 
$q_{T}(X)  = ( q_{T,1}(X),\dots, q_{T,d} (X)) \in \KK[X]^d$
such that
\begin{align*}
p_T(h(X) )= \sum_{k=1}^d q_{T,k}(X) D_k h(X).
\end{align*}
However, when the condition
is satisfied, (when $d>1$) the divisor polynomial $q_{T}(X)$ does not need to be unique. In fact, one can add any solution of the homogeneous equation
\begin{align*}
\sum_{k=1}^d u_{T,k}(X) D_k h(X) =0.
\end{align*}
This point has to be compared with relation \eqref{eq:division} in the univariate case $d=1$ where the divisor polynomial $q_T$ is unique. 
\end{rem}
\subsection{Null controllability }\label{sec:Null}
This section outlines the connection between Stein's method and the theory of linear systems. For a comprehensive account of the theory of linear systems,  the reader is referred to \cite{Rugh,Fuhrmann,fuhrmann2015,henri-1,henri-2} and references therein. Let  $\VV$ (state space) and $\Lbb$ (control space) be two vector spaces (not necessarily finite dimensional) over $\KK$. A linear discrete system $\Sigma$ is a quadruple $(\VV,\Lbb,\Gamma,\Lambda)$, where $\Gamma: \VV \to \VV$ (evolution operator) and $\Lambda:\Lbb \to \VV$ (input operator) are two linear maps, and the state variables follow the following linear dynamic 
\begin{align}\label{eq:Linear-System}
g_{t} = \Gamma g_{t-1} + \Lambda  p_{t-1} , \quad t =1,2,\ldots. 
\end{align}
Let us introduce the notion of (null) controllability in the theory of linear systems, another component that plays a significant role in our paper. 
\begin{mydef}\label{def:Null-Controllability}
Let $T\ge 1$. We say that an initial state $g_0\in \VV$ is null controllable in $T$-steps and denoted by 
$g_1 	\stackrqarrow{T}{\Sigma}  0$, 
if there is a finite (null) control sequence $\{  p_0, \dots, p_{T-1}  \}\subseteq \Lbb$  such that the linear recursion \eqref{eq:Linear-System} reaches $g_{T}=0\in \VV$ at time $T$. Denote 
\begin{align*}
\mathcal{N} (\Sigma,T) & = \left \{  g_0 \in \VV  \, : \, g_0 	\stackrqarrow{T}{\Sigma}  0 \right \}.
\end{align*}
The set of all null controllable states is $\mathscr{C}(\Sigma)  = \bigcup_{T\ge 1} \mathscr{C} (\Sigma,T)$. Clearly, both $\mathscr{C}(\Sigma,T)$ and $\mathscr{C}(\Sigma)$ are vector spaces over $\KK$, and $\mathscr{C}(\Sigma,T) \subseteq \mathscr{C}(\Sigma,T+1)$ for every $T \ge 1$.
\end{mydef}

\begin{lem}\label{lem:Null-Controllability-SimpleCrtierion}
Consider a linear discrete system $\Sigma$ as described above.  Then, the state $g_0 \in \VV$ is null controllable if and only if for some $T\ge 1$,
\begin{equation*}
\Gamma^T g_0  \in \sum_{t=1}^{T} \Gamma^{T-t} \Lambda (\Lbb).
\end{equation*}
	\end{lem}

\begin{proof}
	Using the linear dynamic \eqref{eq:Linear-System} one can readily  obtain that, for $t=1,\ldots,T$,
	\begin{align*} 
	g_t = \Gamma^{t} g_0 + \sum_{s=1}^{t}  \Gamma^{t-s} \Lambda p_{s-1}, 
	\end{align*}
	and hence, the result follows at once.
	\end{proof}
\begin{rem}\label{rem:Null-Controllability1}
	\begin{enumerate}
If the null controllability problem has finite horizon solutions  
the following algorithm  finds the shortest null control sequences:\\
\hrule height 0.5pt depth 0.5pt width \textwidth
\textbf{Algorithm} :  Finding null control sequence
	\hrule height 0.5pt depth 0.5pt width \textwidth
\begin{itemize}
\item[(a)] \textbf{At stage} $t=0$, pick an initial state $g_0 \in \VV$, and  consider the linear  equation
\begin{align*}
  \Gamma g_0 = -\Lambda p_0. 
\end{align*}
If this equation has a solution in $\Lbb$, then stop (equivalently, $\Gamma(-g_0) \in \text{Range}(\Lambda)$).
\item[(b)] Otherwise we continue the  recursion, until at some  stage $T$ for the first time the system
\begin{align}\label{linear:controllability:eq}
 \Gamma^T g_0 =-\bigl( \Lambda p_{ T-1} +  \Gamma  \Lambda  p_{  T-2} + \Gamma^2     p_{ T-3 } 
 + \dots  + \Gamma^{T-2} \Lambda   p_{ 1 } + \Gamma^{T-1}\Lambda p_0
\bigr)\end{align}
has solution (which is not necessarily unique) $(p_0,\dots, p_{T-1})\in \Lbb^T$.
\end{itemize}
	\hrule height 0.5pt depth 0.5pt width \textwidth
\end{enumerate}
\end{rem}

The linear control framework applies directly to our forward algebraic Stein chain construction. To see this, consider the linear system $\Sigma$, where the state space  $\VV= \KK[X_1,\dots, X_d]$  and the control space $\Lbb=\KK[Y]$. We stress that both the state and control spaces are infinite dimensional vector spaces which do not enjoy suitable analytic properties such as being a Banach space. Moreover, the evolution and input operators are given by the Malliavin operator ($X=(X_1,\ldots,X_d)$),
\begin{align*}
\Gamma_Y : \KK[ X_1,\dots, X_d]\to  \Big \langle \partial_{X_1} h(X),\ldots, \partial_{X_d}h (X) \Big \rangle 
\subseteq \KK[ X_1,\dots, X_d].
\end{align*}
The following result is an immediate consequence of the definitions of an algebraic Stein chain and a null controllable state.

\begin{prop}\label{prop:Chain=Null}
Fix $T\ge 1$. Let $d \ge 1$, and assume that the Gaussian polynomial target variable $Y$ takes the form \eqref{eq:Polynomial-Target}. Let $\Sigma=(\VV,\Lbb,\Gamma_Y,\Gamma_Y)$ be the linear system as described above. Let 
\begin{equation}\label{eq:initial-space}
\KK_0[Y]= \Big \{  p(Y) \in \KK[Y] \, : \,  \E[p(Y)]=0 \Big \}.
\end{equation}
\begin{itemize}
\item[(a)] Then, there is a surjective binary relation (and not a function) between the set $\mathscr{C}(\Sigma,T) \cap \KK_0[Y]$ and the set $ASC(Y,T)$ of all algebraic Stein chains of length $T$. Moreover, there is a surjective binary relation between the set $\mathscr{C}(\Sigma)\cap \KK_0[Y]$ and the set $APSO(Y)$ of all the algebraic polynomial Stein operators for the target random variable $Y$. 
\item[(b)] When $d=1$, the same statement as in item (a) holds by replacing everywhere $ASC(Y,T)$ with $SC(Y,T)$, and $APSO(Y)$ with $PSO(Y)$, respectively.
\end{itemize}
\end{prop}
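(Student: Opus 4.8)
The plan is to recognise the Stein-chain dynamic \eqref{eq:Stein-Cahin-Dynamic} as a special case of the control-system dynamic \eqref{eq:Linear-System} and to pair each algebraic Stein chain with the point of $\VV$ it feeds into the system. For $\Sigma=(\VV,\Lbb,\Gamma_Y,\Gamma_Y)$, linearity of $\Gamma_Y$ turns \eqref{eq:Linear-System} into $g_{t+1}=\Gamma_Y(g_t+p_t)$, which is the same map as $g_t=\Gamma_Y(g_{t-1}+p_{t-1})$ in \eqref{eq:Stein-Cahin-Dynamic}. The relation I would use assigns to a chain $(p_0,p_1,\ldots,p_T)\in ASC(Y,T)$ its leading coefficient $p_0(Y)$, viewed inside $\VV=\KK[X_1,\ldots,X_d]$.

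For surjectivity onto $ASC(Y,T)$, take $(p_0,\ldots,p_T)\in ASC(Y,T)$. The $t=0$ moment condition in Definition \ref{def:Stein-Chain} forces $\E[p_0(Y)]=-\E[g_0]=0$, so $p_0(Y)\in\KK_0[Y]\subseteq\VV$. Run $\Sigma$ from $g_1:=p_0(Y)$ with the control sequence $(q_1,\ldots,q_{T+1}):=(0,p_1,\ldots,p_T)\subseteq\Lbb$; the forced zero first control is the only bookkeeping subtlety, and is why a chain of length $T$ corresponds to null controllability in $T+1$ steps. Writing $(\hat g_0,\ldots,\hat g_T)$ for the Gaussian-polynomial sequence attached to the chain by \eqref{eq:Stein-Cahin-Dynamic}, a one-line induction gives $g_{t+1}=\hat g_t$ for $t=1,\ldots,T$, whence $g_{T+2}=\Gamma_Y\bigl(g_{T+1}+q_{T+1}\bigr)=\Gamma_Y\bigl(\hat g_T+p_T(Y)\bigr)=\Gamma_Y(0)=0$ by the defining identity of an algebraic chain. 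Thus $p_0(Y)\in\mathcal N(\Sigma,T+1)\cap\KK_0[Y]$, and the relation $R=\{(p_0(Y),(p_0,\ldots,p_T)):(p_0,\ldots,p_T)\in ASC(Y,T)\}\subseteq\bigl(\mathcal N(\Sigma,T+1)\cap\KK_0[Y]\bigr)\times ASC(Y,T)$ is surjective onto $ASC(Y,T)$; it is not a function, a given $p_0$ admitting many completions $(p_1,\ldots,p_T)$.

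In the converse direction, a null control $(q_1,\ldots,q_{T+1})$ steering $g_1=p_0(Y)$ to $g_{T+2}=0$ should be read backwards into an algebraic Stein chain, and this is the step I expect to be the main obstacle. The difficulty is that $\Gamma_Y(g_{T+1}+q_{T+1})=0$ does not force $g_{T+1}+q_{T+1}=0$ but only $g_{T+1}+q_{T+1}\in\ker\Gamma_Y$ — the constants when $d=1$, and a larger space (the solutions of $(\nabla h,\delta^{-1}(\cdot))_{\R^d}=0$) when $d\ge2$ — so one must absorb this residual into the last coefficient to reach the exact algebraic identity; in the univariate case the residual is a constant and hence lies in $\KK[Y]$, while for $d\ge2$ the analogous step needs the multivariate polynomial division of Remark \ref{rem:BSC_d>1}. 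This freedom, together with the non-uniqueness of the intermediate controls, is exactly what the parenthetical ``(and not a function)'' records.

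Finally, the unbounded-horizon half of (a) follows by taking unions: $\mathcal N(\Sigma)=\bigcup_{T\ge1}\mathcal N(\Sigma,T)=\bigcup_{T\ge1}\mathcal N(\Sigma,T+1)$ since $\mathcal N(\Sigma,T)\subseteq\mathcal N(\Sigma,T+1)$, and by Definition \ref{def:algebraic-Stein-Operator} together with the chain/operator correspondence of Proposition \ref{prop:Chain-Leads-SteinOperator}(b), $APSO(Y)$ is the same data as $ASC(Y)=\bigcup_{T\ge1}ASC(Y,T)$; the length-$T$ relations glue to the claimed one. Part (b) is then immediate from (a) and Proposition \ref{prop:Algebraic_Stein_Operators}(b): for $d=1$ the latter gives $ASC(Y,T)=SC(Y,T)$ for every $T$ and $APSO(Y)=PSO(Y)$, so every ``$ASC$''/``$APSO$'' in (a) may be read as ``$SC$''/``$PSO$''.
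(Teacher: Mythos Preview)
Your proof is correct and follows essentially the same route as the paper's, only with considerably more care on the indexing. The paper's argument is a one-liner: declare $g_1\,R\,p$ whenever $p$ is a null control sequence for $g_1$, assert that $R$ is surjective and not a function because null control sequences are not unique, and invoke Propositions~\ref{prop:Chain-Leads-SteinOperator} and~\ref{prop:Algebraic_Stein_Operators} for the rest. Your explicit device of prepending a zero control, $(q_1,\ldots,q_{T+1})=(0,p_1,\ldots,p_T)$, is exactly what is needed to reconcile the Stein-chain indexing (which starts at $\hat g_0=0$) with the control-system indexing (which starts at the initial state $g_1=p_0(Y)$), and it makes transparent why the horizon is $T+1$ rather than $T$; the paper leaves this implicit.

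One remark: your third paragraph on the ``converse direction'' is not needed and you can drop the discussion of $\ker\Gamma_Y$. The proposition only asserts a relation $R\subseteq A\times B$ with $A=\mathcal N(\Sigma,T+1)\cap\KK_0[Y]$, $B=ASC(Y,T)$, that is surjective onto $B$ and not a function. Surjectivity onto $B$ is immediate from your construction, and ``not a function'' follows from the non-uniqueness of completions $(p_1,\ldots,p_T)$ for a given $p_0$---the same informal observation the paper makes. Nothing in the statement requires that every null-controllable $g_1\in A$ arise from a chain (left-totality), so the kernel obstruction you worry about simply does not enter.
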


\begin{proof}
(a) For a given $g_0  \in \mathscr{C}(\Sigma,T) \cap \KK_0[Y]$, and $p=(p_t(y) \in \KK[y] \, : \, t = 0,\ldots,T-1) \in ASC(Y,T)$, we say that $g_0 R p$ if the sequence $p$ is a null control sequence for $g_0$. 
Clearly, $R$ defines a  surjective (but not injective) relation that is not a function, since  in general null control sequences (even of the same length) are not unique.  The other statement follows directly from Propositions \ref{prop:Chain-Leads-SteinOperator}, and \ref{prop:Algebraic_Stein_Operators}. (b) Apply Proposition \ref{prop:Algebraic_Stein_Operators}, item (b). 
\end{proof}

\begin{rem}
Assume the convention $X=(X_1,\ldots,X_d)$ as above. Let $p_0 \in \Lbb=\KK[Y]$. Then, the target random variable $Y=h(X)$ admits an algebraic polynomial Stein operator with zeroth order term $p_0$ if the  initial state   $g_0=g_0(X):=\Gamma_Y p_0( h(X) )$ can be null controllable in a finite time. Mathematically, for some $t\ge 1$, there exist polynomials  $p_0(Y),\dots, p_{t-1}( Y ) \in \KK[Y]$ such that  
		\begin{align}\label{eq:g_t=IteratedGammaOperatros}
		 g_{t+1}(X)= \Gamma^t g_0(X) +  \sum_{s=1}^{t}   \Gamma_Y^{t-s} p_{s-1}(h(X) ) =0.
		\end{align}
The latter is  a direct consequence of the set inclusion $\Gamma^t_Y (\KK[Y])  \subseteq \sum_{s=1}^{t} \Gamma^{t-s}_Y (\KK[Y])$ which is in order as soon as the ascending chain $I_1 \subseteq \cdots \subseteq I_t \subseteq I_{t+1} \subseteq \cdots $ would stop, where $I_t = \sum_{s=1}^{t} \Gamma^{t-s}_Y (\KK[Y])$ for $t \in \N$. Although, the Hilbert base theorem guarantees that $\KK[X]$ is a Noetherin ring, however we cannot conclude that the chain would necessary stop due to the undesirable fact that the  vector spaces $I_t$ are not ideal.	
	\end{rem}
\begin{rem}\label{rem:Null-Issues}
	\begin{itemize}
\item[(i)] In order to apply the rich theory of linear control systems in our framework, we mention the following issues. Firstly, in our formulation, both the state and the control spaces are infinite dimensional vector spaces over $\KK$. Secondly, it is also possible that one can think of the (infinite dimensional) Hilbert state space $\VV=L^2(\mu)$ with $\mu$ the standard Gaussian measure on the real line. However, the control space $\Lbb$ ``must" be taken as the ring $\KK[Y]$, which is not a Hilbert space (not even a Banach space). Therefore, we believe that the theory developed for infinite dimensional linear systems, see for instance \cite{Fuhrmann,Trig1,Trig2}, is hardly applicable in our setting to study the null controllability.
\item[(ii)] 

To bypass the obstacles mentioned above,  in the  computer  implementation of the algorithm we may  set an upper bound on the polynomial degree of the Stein operator coefficients,  assuming that the initial state and controls belong to $\Lbb=\KK_m[Y]$. Then, at stage  $t$,  
\begin{align*} \deg( g_t ) \le \left\{ \begin{matrix} 
\deg(h)\times m + (\deg(h) -2) \times t
& \mbox{(univariate $h$)}\\
\deg(h)\times (m+t)  & 
\mbox{ (multivariate $h$).} \end{matrix}\right. 
\end{align*}
Hence, in our linear system, state space is time varying (increasing in time); however, within a finite time horizon $T$ we can implement the algorithm to check null controllability up to time $T$ on a finite dimensional state space $\R^{\deg(h)\times m +  ( \deg(h)  - 2) \times T}$ and control space $\R^{\deg(h)\times m }$, see also Remark \ref{rem:Stein-Chain-Space}. Moreover, we point out that even by fixing a time horizon $T$, the Kalman criterion as described in the introduction)
 cannot be used to study null controllability, because the criterion checks null controllability within the whole null states $\mathscr{C}(\Sigma)$ and not on $\mathscr{C}(\Sigma,T)$. Lastly,   in the computer implementation of the linear system $\Sigma$ (as described above) the input operator $\Lambda = \Gamma_Y \Theta $, where the operator $\Theta$ is given by the embedding 
$p(Y)\in \KK[Y]$ into $p( h(X_1,\dots, X_d))\in\KK[X_1,\dots,X_d]$. This is due to the fact that operator $\delta^{-1}$ (in the definition of the $\Gamma_Y$ operator) acts over the polynomial ring $\KK[X_1,\ldots,X_d]$. 
\end{itemize}  
\end{rem}

\begin{rem}\label{rem:MATLAB_code_General_0Order_Term}
	In the Malliavin-Stein method, the zeroth-order polynomial $p_0(y)=cy$ is commonly used. However, other choices of $p_0(y)$ may allow for Stein operators with either lower order $T$ or lower maximal coefficient degree $m$.	 In the computer implementation of our algorithm, for a given target $Y=h(X)$
		the input is a zeroth-order polynomial coefficient $p_0(h(x))$, with 
		$\E[p_0( h(X) ) ]=0$. In order to find a Stein operator with generic $p_0(y)$ 
		we run the algorithm several times with the respective  initial coefficients
		$p_0^{(k)}(y)= y^k - \E[ h(X)^k]$, in order to obtain the corresponding
		Stein operators satisfying
		\begin{align*}
		\E\bigl[  {\mathcal S}^{(k)} f( Y) \bigr]=-\E\bigl[ p_0(Y) f(Y)\bigr]
		\end{align*}
		for $k=1,\dots, m_0$. When the Stein chains end successfully
		and Stein operators are found for the initial coefficients above,
		by solving a linear system for
		the linear combination one can further reduce the order
		or the maximal coefficient degree  of a Stein operator
		with a generic zeroth-order polynomial coefficient $p_0(y)$ of degree $\le m_0$.
	\end{rem}


\if 0

In our case
\begin{align*}
A f(N) =  h'(N) \delta^{-1}\bigl(  f(N)-\E[ f(N) ] \bigr)
\end{align*}
with $A c=0$. It sends a polynomial of degree $0$  into $0$, and a Gaussian polynomial
of degree $q$ into a  Gaussian polynomial of degree $(q+p-2)$, where $p$ is the degree of the target $X=h(N)$.

Therefore the zero-order term  $v_0$  corresponding to the  polynomial $c X$ of degree 1 in the target,
is a Gaussian polynomial of degree $p$, which is mapped to  $A^t v_0$ 
corresponding to a Gaussian  polynomial of degree $p+(p-2)t $.

On the other hand, if we restrict the degree of the polynomial coefficients in the target $X=h(N)$ for all derivatives,
with upper bound $m$, each $A^{t-s} B u_s$  has degree at most $mp + (p-2) (t-s)$, with $s \ge 1$.
Therefore we can take $A$ $(N+1)\times (N+1)$  with  $N=mp+(p-2)(t-1)$ and $B$ is an $(N+1)\times (m+1)$ matrix.

\fi 

\section{ Applications to Gaussian Hermite polynomials }\label{sec4}

\subsection{Highest order polynomial coefficient}\label{sec:Higher-Coef}

Before the next proposition, we note the following lemma that will be needed in the proof.  The result of \cite{ind61} is stated for the physicists' Hermite polynomials rather than the probabilists' Hermite polynomials, as used in our paper.

\begin{lem}\label{indlem}(\cite{ind61}). Define $E_p(x)=(\pi^{1/2}2^{2p} p!)^{-1/2}e^{-x^2/4}H_p(x)$.  Then the relative maxima of $|E_p(x)|$, $x\geq0$, steadily increase, i.e., if $x_1<x_2<\ldots<x_j$ are the non-negative zeros of $E_p'$ for fixed $p$, then
	\begin{equation*}|E_p(x_1)|<|E_p(x_2)|<\ldots<|E_p(x_j)|.
	\end{equation*}
\end{lem}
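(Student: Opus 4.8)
I would prove Lemma~\ref{indlem} by the classical Sonin--P\'olya ``amplitude'' argument. The first step is to put the statement into a convenient normal form. Eliminating the first-derivative term from Hermite's differential equation turns the Hermite function (of which $E_p$ is a positive constant multiple, possibly after an affine rescaling of the variable, depending on the normalisation convention) into a solution of a self-adjoint equation
\begin{equation*}
y''+\phi(x)\,y=0,\qquad \phi(x)=A-B\,x^2 ,
\end{equation*}
with constants $A,B>0$ depending on $p$. Since multiplication by a nonzero constant and an affine change of variable affect neither the locations nor the relative ordering of the maxima of $|E_p|$, it suffices to prove the claim for such a $y$. I will use two structural facts: $\phi$ is \emph{strictly decreasing} on $(0,\infty)$, and it is positive exactly on the oscillatory interval $[0,x^{\ast})$, where $x^{\ast}=\sqrt{A/B}$.

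On $[0,x^{\ast})$ I would introduce the Sonin amplitude function
\begin{equation*}
F(x):=y(x)^{2}+\frac{y'(x)^{2}}{\phi(x)} .
\end{equation*}
Differentiating and using $y''=-\phi y$, the two ``cross'' terms cancel and one is left with
\begin{equation*}
F'(x)=-\frac{\phi'(x)}{\phi(x)^{2}}\,y'(x)^{2}\ \ge\ 0 ,
\end{equation*}
so $F$ is non-decreasing on $[0,x^{\ast})$; and since $F'>0$ whenever $x>0$ and $y'(x)\neq0$, while $y'$ vanishes only at isolated points, $F$ is in fact \emph{strictly} increasing across any interval $[c,d]\subseteq[0,x^{\ast})$. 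At a critical point $x_{k}$ of $y$ one has $y'(x_{k})=0$, hence $F(x_{k})=y(x_{k})^{2}$. Therefore, provided the relevant non-negative critical points $x_{1}<x_{2}<\dots<x_{j}$ all lie in $[0,x^{\ast})$, the strict monotonicity of $F$ gives $|y(x_{1})|^{2}<|y(x_{2})|^{2}<\dots<|y(x_{j})|^{2}$, which (translating back to $E_p$) is precisely the asserted inequality.

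The remaining point --- and the only step I expect to require genuine care --- is to check that every relative maximum of $|y|$ on $[0,\infty)$, equivalently every non-negative zero of $y'$ at which $y\neq0$, actually lies in the oscillatory interval $[0,x^{\ast})$. This follows from a convexity / Sturm-comparison argument: for $x\ge x^{\ast}$ we have $\phi(x)\le0$, so $y''(x)=-\phi(x)y(x)$ has the same sign as $y(x)$, which forces any critical point of $y$ there to be a local \emph{minimum} of $|y|$ (if $y>0$ it is a local minimum of $y$; if $y<0$ it is a local maximum of $y$, hence again a local minimum of $|y|$); and comparison with the equation $y''=0$ shows $y$ has at most one zero in $\{x\ge x^{\ast}\}$, beyond which $|y|$ decreases monotonically to $0$. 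Hence $|y|$ has no relative maximum with $x\ge x^{\ast}$. (Alternatively, one may simply invoke the classical bound that the largest zero of $H_p$, and therefore the largest critical point of $y$, is strictly less than $x^{\ast}$.) A small amount of bookkeeping then handles parity: if $p$ is even, $x=0$ is itself a critical point and is included by noting $F(0)=y(0)^{2}$ together with the fact that $F$ is non-decreasing on all of $[0,x^{\ast})$; if $p$ is odd, $y(0)=0$ and the first relative maximum of $|y|$ occurs at some $x_{1}>0$.
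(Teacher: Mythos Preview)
The paper does not supply its own proof of this lemma; it is quoted from Indritz (1961) and used as a black box in the proof of Proposition~\ref{prop:higher-coeff}. Your Sonin--P\'olya amplitude argument is correct and is essentially the classical proof (indeed the one Indritz gives). One small tightening: in your step~4 you argue that any critical point in $\{x\ge x^{\ast}\}$ would be a local \emph{minimum} of $|y|$, but the lemma is phrased in terms of \emph{all} non-negative zeros of $E_p'$, not only the local maxima of $|E_p|$; the cleanest way to close this loop is the convexity observation you already sketch---on $(x^{\ast},\infty)$ one has $y''y>0$, and a positive convex function tending to $0$ at $+\infty$ is strictly decreasing, so $y'$ has no zeros there at all.
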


\begin{prop} \label{prop:higher-coeff} Let $X \sim N(0,1)$. Assume that $Y=H_p(X)$, where $H_p$ is the Hermite polynomial of degree $p \ge 2$ (the case $p=1$ corresponds to standard Gaussian distribution that is not of interest in this paper). Let 
$	\mathcal{S} = \sum_{t=0}^{T} p_{t} \partial^t \in PSO(Y) =APSO(Y)$ be a polynomial Stein operator for $Y$. As before, identify the (dependent) variable $y=H_p(x)$. Then the following properties hold:
\begin{enumerate}
\item[(a)] $\KK[ y ]\cap \langle H'_p(x) \rangle$ is an ideal of the subring $\KK[y]$. Moreover,
	\begin{align*}
	\KK[ y ]\cap \langle H'_p(x) \rangle = \langle t(y) \rangle
	\end{align*}
	with 
	\begin{align*}
	t(y)= \prod_{z: H'_p(z)=0}  \bigl( y- H_p(z) \bigr).
	\end{align*}
		\item[(b)]	$p_{T}(y) \in \KK[ y ]\cap \langle H'_p(x) \rangle$, and $p_{T}( H_p(x) )=0$  for all 
	solutions $x$ of  $H'_p(x)=0$. In particular, $p_{T}(y)=0$ at all local minimum or local maximum values $y$ of $H_p$. 
	\item[(c)]  $\mathrm{deg}(p_{T})\geq p/2$ if $p$ is even, and $\mathrm{deg}(p_{T})\geq p-1$ if $p$ is odd.
	\end{enumerate}
\end{prop}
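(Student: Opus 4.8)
The plan is to read (a) as a statement about the subring $\KK[y]\cong\KK[Y]$ and the ideal it cuts out of $\langle H_p'(x)\rangle$, deduce (b) from (a) together with Corollary~\ref{cor:Leading-Coff_Contain_h'(X)}, and then obtain (c) by computing $\deg t(y)$. Throughout I would use the classical facts that $H_p'=pH_{p-1}$ has exactly $p-1$ simple real zeros, namely the zeros of $H_{p-1}$; that $H_p$ and $H_p'$ are coprime (so no critical value of $H_p$ vanishes); and that at a zero $z$ of $H_p'$ one has $H_p''(z)\neq 0$, so $H_p(x)-H_p(z)$ has a zero of multiplicity exactly $2$ at $z$. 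For (a): since $\langle H_p'(x)\rangle$ is an ideal of $\KK[x]$ and $\KK[y]$ is a subring, $I:=\KK[y]\cap\langle H_p'(x)\rangle$ is an ideal of $\KK[y]$, hence principal, $I=\langle g(y)\rangle$ with $g$ monic (by Remark~\ref{rem:PID}; for $\KK=\Z$ one descends from $\Q$ by Gauss's lemma since $g$ is monic). I claim $g=t$, where $t(y)=\prod_c(y-c)$ runs over the \emph{distinct} critical values $c=H_p(z)$, $H_p'(z)=0$. First $t(y)\in I$: each factor $H_p(x)-H_p(z)$ of $t(H_p(x))$ is divisible by $(x-z)$, and as the zeros $z$ of $H_p'$ are distinct, $\prod_{z:\,H_p'(z)=0}(x-z)=H_{p-1}(x)$ divides $t(H_p(x))$; since $H_p'=pH_{p-1}$ and $p$ is a unit, $H_p'(x)\mid t(H_p(x))$. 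Conversely, if $g\in I$, say $g(H_p(x))=pH_{p-1}(x)u(x)$, then at each zero $z$ of $H_{p-1}$ the order of vanishing of $g(H_p(x))$ equals $2\,\mathrm{ord}_{y=H_p(z)}(g)$ while $H_{p-1}$ vanishes to order $1$; hence $g$ must vanish at every critical value, i.e.\ $t(y)\mid g(y)$. With $g\mid t$ this gives $g=t$.

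For (b): $h=H_p$ has $\deg h=p\ge 2$ and $\E[H_p(X)]=0$, so Corollary~\ref{cor:Leading-Coff_Contain_h'(X)} yields $p_T(y)\in\KK[y]\cap\langle H_p'(x)\rangle=\langle t(y)\rangle$, i.e.\ $t(y)\mid p_T(y)$. Therefore $p_T$ vanishes at every root of $t(y)$, that is at every value $H_p(z)$ with $H_p'(z)=0$; and since the zeros of $H_p'$ are simple we have $H_p''(z)\neq 0$ there, so every such $z$ is a strict local extremum of $H_p$, whence $p_T$ vanishes at all local minimum and maximum values of $H_p$. This also shows $\deg p_T\ge\deg t(y)$, reducing (c) to counting the distinct critical values.

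For (c): by the parity of $H_p$ the $p-1$ critical points are $0,\pm z_1,\dots,\pm z_{p/2-1}$ when $p$ is even and $\pm z_1,\dots,\pm z_{(p-1)/2}$ when $p$ is odd, with $0<z_1<z_2<\cdots$; the critical values are $H_p(0),H_p(z_1),\dots$ (resp.\ $\pm H_p(z_1),\dots$), and these are pairwise distinct provided $|H_p(0)|<|H_p(z_1)|<|H_p(z_2)|<\cdots$ (together with $H_p(z_i)\neq 0$, which we already have). That gives $p/2$ distinct values in the even case and $p-1$ in the odd case, so $\deg t(y)=p/2$ (resp.\ $p-1$) and the bounds follow. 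The one genuinely analytic input — and the main obstacle — is this strict monotonicity of the successive relative maxima of $|H_p|$ on $[0,\infty)$, the probabilists' analogue of Indritz's Lemma~\ref{indlem} (whose statement is for the \emph{weighted} Hermite functions, so a little care is needed to pass to $|H_p|$ itself). I would establish it directly by the Sonine--P\'olya device: put $G(x)=H_p(x)^2+\tfrac1p H_p'(x)^2$; the Hermite equation $H_p''-xH_p'+pH_p=0$ gives $G'(x)=\tfrac{2x}{p}H_p'(x)^2\ge 0$ on $[0,\infty)$, with equality only at $x=0$ and the isolated zeros of $H_p'$, so $G$ is strictly increasing there; evaluating at the critical points, $G(z_i)=H_p(z_i)^2$ and $G(0)=H_p(0)^2$ (since $H_p'(0)=0$ when $p$ is even), which produces exactly the required chain of strict inequalities.
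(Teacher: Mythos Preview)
Your proof is correct, and in two places it is actually tighter than the paper's.

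For part (a), the paper argues only one inclusion: it checks that $t(H_p(x))$ is divisible by $H_p'(x)$ (via the Taylor expansion of $H_p(x)-H_p(z)$ at each critical point $z$) and then invokes the PID property of $\KK[y]$ without verifying that $t$ is actually a generator. You close this gap by also proving the reverse divisibility: any $g\in I$ must vanish at every critical value because $g(H_p(x))$ has even vanishing order at each zero of $H_{p-1}$, while $H_{p-1}$ vanishes there to order~$1$. Your insistence on taking $t$ as the product over \emph{distinct} critical values is the right reading; with the product taken literally over all $p-1$ critical points, $t$ would have repeated factors in the even-$p$ case and would no longer generate $I$.

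For part (c), the paper appeals to Indritz's Lemma~\ref{indlem}, which concerns the relative maxima of the \emph{weighted} function $e^{-x^2/4}H_p(x)$; those extrema occur at the zeros of $H_p'(x)-\tfrac{x}{2}H_p(x)$, not at the zeros of $H_p'(x)$, so a direct transfer to $|H_p(z_i)|$ is not immediate. Your Sonine--P\'olya argument with $G(x)=H_p(x)^2+\tfrac1p H_p'(x)^2$ and $G'(x)=\tfrac{2x}{p}H_p'(x)^2$ bypasses this entirely: $G$ is strictly increasing on $[0,\infty)$ and equals $H_p(z)^2$ precisely at the critical points (including $z=0$ when $p$ is even, since then $H_p'(0)=pH_{p-1}(0)=0$). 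This is both more elementary and exactly tailored to the quantity you need. Part (b) matches the paper's use of Corollary~\ref{cor:Leading-Coff_Contain_h'(X)}.
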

\begin{proof} 
	(a) It is easy to see that $\KK[ y ]\cap \langle H'_p(x) \rangle \subseteq \KK[y]$ is an ideal of the subring $\KK[y]$. Also, after substitution $y=H_p(x)$, it becomes clear that the polynomial $t(y)=t(H_p(x))$, given by
	$
	t( y ) = \prod_{z: H'_p(z)=0} \bigl( y -H_p(z) \bigr) \in \KK[y],  
	$
	is divisible by $H'_p(x)$ by using the Taylor expansion $H_p (x) - H_p (z) = \sum_{k=1}^{p} H^{(k)}_p (z) (x-z)^k$ for every $z$ such that $H'_p (z)=0$. Note that $H'_p(x)=p H_{p-1}(x)$ and all the roots of the  Hermite polynomials are real. So the claim follows at once by a direct application of Remark \ref{rem:PID}. 

\vspace{2mm} 

\noindent{(b)} Apply Corollary  \ref{cor:Leading-Coff_Contain_h'(X)}. The rest are direct consequences. 
		
\vspace{2mm} 		
		
\noindent{(c)}  This follows from  part (b) and the fact that $H_p$ has $p/2$ distinct values for the local maxima and minima when $p$ is even, and $p-1$ distinct values for the local maxima and minima when $p$ is odd.  It is a standard property of $H_p$ that it has exactly $p$ real roots.  Therefore $H_p$ must have $p$  local maxima and minima when $p$ is even and $p-1$ local maxima and minima when $p$ is odd.  But when $p$ is even, $H_p$ is an even function and there can hence be at most $p/2$ distinct values for the local maxima and minima.  That there are exactly $p/2$ distinct local maxima and minima in the even case now follows from the stronger result of Lemma \ref{indlem}.  For odd $p$, we let $x_1<x_2<\ldots<x_{(p-1)/2}$ be the non-negative zeros of $H_p'$, and as $H_p$ is an odd function we have that $-x_{(p-1)/2}<\ldots<-x_{2}<-x_1$ are the negative zeros of $H_p'$.  Then due to the stronger result of Lemma \ref{indlem}, we have that $|H_p(x_1)|=|H_p(-x_1)|<\ldots<|H_p(x_{(p-1)/2})|=|H_p(-x_{(p-1)/2})|$ with $H_p(x_k)=-H_p(-x_k)$ for all $k=1,2,\ldots,(p-1)/2$.  Thus, there are exactly $p-1$ distinct values for the local maxima and minima in the odd case. 
\end{proof}

\begin{rem} It is well-known that (see, \cite[Proposition 2.1]{gaunt-34}) target random variables $Y=h (X)$ with $h \in \KK[X]$ and $\text{deg}(h) =2$ admit a polynomial Stein operator of order two with linear polynomial coefficients.
	
\end{rem}

\begin{rem} Part (c) of Proposition (\ref{prop:higher-coeff}) is useful in implementing our code to find Stein operators for $H_p(X)$.  In particular, if $p$ is even we must seek polynomial Stein operators with coefficients that have degree at least $p/2$, and if $p$ is odd we require the degree to be at least $p-1$. As can been seen in Table \ref{tablesummary}, we have used our \texttt{MATLAB} code to find polynomial Stein operators for $H_p$, $p=1,\ldots,10$, that attain these minimum possible degrees.  In fact, we have also tested this for $p=11,12,14,16,18$, and our \texttt{MATLAB} code has always been able to find a Stein operator with the minimum possible degree.  It therefore seems reasonable to conjecture that this is the case for all $p\geq1$.
\end{rem}

\subsection{Examples of Stein operators for $H_p(X)$}\label{sec4.4}

For univariate Gaussian polynomials, our \texttt{MATLAB} code can find all polynomial Stein operators up to a given order $T$ and maximum degree $m$.  We illustrate this in Appendix \ref{appendixa} by providing examples of Stein operators for $H_p(X)$, $p=1,\ldots,6$, where $X\sim N(0,1)$.  The code can also be applied to $h(X)$ when $h$ is not a Hermite polynomial, as demonstrated in the Supplementary Information (SI).  

In this paper, we only give the `simplest' Stein operators for $H_p(X)$, $p=1,\ldots,6$.  For $p=1,\ldots,6$, we list the Stein operators
with the lowest order $T$ and lowest degree $m$ (if there are two Stein operators with the lowest $T$ ($m$), we given the one with lowest $m$ ($T$)).  For $p\geq7$, the Stein operators become too complex to state in this paper, but in Table \ref{tablesummary} we give a summary of the complexity of the `simplest' Stein operators for $p=1,\ldots,10$.   Many other examples are given in the SI, some of which are `simpler' in other senses than those presented in Appendix \ref{appendixa}, such as having lower values of $T+m$ or $T\times m$.

\begin{table}
	\caption{Summary of ordered pairs $(T,m)$ of the order $T$ of the Stein operator and degree $m$ of polynomial coefficients for the Stein operators for $H_p(X)$ that either minimise $T$ or $m$.  The CPU and elapsed time (in seconds)  correspond to the pair $(T,m)$ of zero-order term $cyf(y)$ with Min $m$.
	}\label{tablesummary} 
	\centering
	\begin{tabular}{|l|l|l|l|l|l|l|l|l|l|l|}
		\hline
		\multirow{2}{*}{Distribution} &
		\multicolumn{2}{c|}{General 0th-order term} &
		\multicolumn{4}{c| }{0th-order term $cyf(y)$} \\
		& Min $T$ & Min $m$ & Min $T$ & Min $m$  & CPU  & Elapsed time\\
		\hline
		$H_1(X)$ & (1,1) & (1,1) & (1,1) & (1,1) & -  &  -\\
		$H_2(X)$ & (1,1) & (1,1) & (1,1) & (1,1) & 0.4& $0.5\,\si{\second}$ \\
		$H_3(X)$ & (3,4) & (5,2) & (4,3) & (5,2)& 0.7 & $0.9\,\si{\second}$  \\
		$H_4(X)$ & (2,3) & (3,2) & (3,2) & (3,2) & 0.6 & $0.8\,\si{\second}$ \\
		$H_5(X)$ & (5,12) & (13,4) & (6,11) & (13,4)  & 1.4& $4.6\,\si{\second}$ \\
		$H_6(X)$ & (3,6) & (6,3) & (4,5) & (6,3)  &1.3 &$2.1\,\si{\second}$ \\
		$H_7 (X)$  & (7,24) &   (25,6)  &   (8,23)   &  (25,6)  & 5.8& $44.4\,\si{\second}$  \\
		$H_8 (X)$  & (4,10)  &   (10,4)   &   (5,9)   &  (10,4) & 1.8 & $5.2\,\si{\second}$ \\
		$H_9 (X)$  & (9,40) &   (41,8)  &   (10,39)   &      (41,8) & 21.0 & $343.3\,\si{\second}$\\
		$H_{10} (X)$  & (5,15) &  (15,5)   &  (6,14)    &   (15,5) & 3.6 & $21.8\,\si{\second}$ \\
		\hline
	\end{tabular}
\end{table}

The Stein operators in Table \ref{tablesummary}, except the one for $H_9(X)$ with $(T,m)=(9,40)$, were obtained using a standard laptop, a \textit{MacBook Pro with processor: 2,9 GHz Dual-Core Intel Core i5	and memory:  8 GB, 2133 MHz, LPDDR3}.  Despite the complexity of the operators, the code found them rather quickly. For example, it was able to find the horribly complex Stein operator for $H_9(X)$ with $(T,m)=(41,8)$ in just $343.3\,\si{\second}$. We easily obtained the Stein operator for $H_9(X)$ with $(T,m)=(9,40)$ using a powerful computer server at The University of Manchester, and, whilst not explored in this study, a full exploitation of such computational power could yield some hugely complex Stein operators!  Indeed, using just a standard PC, we obtained a Stein operator for $H_{20}(X)$ with zero-order term $cyf(y)$ and $(T,m)=(11,54)$, with 402.6 CPU and elapsed time $4276\,\si{\second}$.

There are several interesting observations we can make from the Stein operators presented in Appendix \ref{appendixa} and the summary in Table \ref{tablesummary}. We notice  that there is an important increase in complexity from $p=1,2$ to $p=3,4$, in which the Stein operators go from being first order with linear coefficients (for which it is simple to solve the corresponding Stein equation) to be being at least second order with higher order coefficients (for which we are not able to solve the corresponding Stein equation).  There is a further increase in complexity from $p=3,4$ to $p=5,6$, in which the Stein operators go from being expressed in one line equations to equations that sprawl several lines.  


From Table \ref{tablesummary}, we observe that, for 0-order term $cyf(y)$ the ordered pairs $(T,m)$ satisfy the following recipes:

\begin{itemize}

\item [(a)] Minimum $T$: (i) when $p\ge 4$ is even, we have $(T,m)= (p/2 +1, {p/2 \choose 2} + p/2 -1)$; (ii) when $p\geq3$ is odd, we have $(T,m)=(p+1,\binom{p}{2}+(p-1)/2-1)$.

\item [(b)] Minimum $m$: (i) when $p\geq2$ is even, we have $(T,m)=(\binom{p/2+1}{2},p/2)$; (ii) when $p\geq3$ is odd, we have $(\binom{p+1}{2}-(p-1)/2,p-1)$.

\end{itemize}

For general $0$-order term we have:

\begin{itemize}

\item [(c)] Minimum $T$: (i) when $p$ is even, just pick up the pair $(T,m)$ minimizing $m$ with $0$-order term $cy f(y)$ associated with that value of $p$  and switch the components; (ii) when $p$ is odd, pick up the pair $(T,m)$ minimizing $T$ with $0$-order term $cy f(y)$ associated with that value of $p$  and set $(T-1,m+1)$.

\item [(d)] Minimum $m$: for both even and odd $p$ this is the same as in item (b).

\end{itemize}

Rather curiously, the minimum values of $T$ seem to be connected with the number of distinct local maxima and minima of the Hermite polynomial $H_p$.  We know that this is the case for the minimum possible degree $m$, due to Proposition \ref{prop:higher-coeff}, part (c).  

We observe that for each $p$ we tested the code always found a Stein operator with the minimum possible degree (see Proposition \ref{prop:higher-coeff}, part (c)).  We do not, however, have a proof of an analogous result for the minimum possible order $T$ of Stein operators.  In theory, it is therefore possible that we have not actually found Stein operators with the minimum possible values of $T$.  However, our tests suggest that this is a remote possibility.  For example, for zero-order term $cyf(y)$, our code could not find any Stein operators for $H_4(X)$ with input variables $T=2$ and $m=60$, nor for $H_5(X)$ with input variables $T=5$ and $m=80$. Additionally, we performed many other tests with our code and verified all of the predicted recipes (a) -- (d) for a number of values of $p$ between 11 and 20 (the complex Stein operator for $H_{12}(X)$ is given in the SI).  As such, we believe that it is reasonable to conjecture that the recipes (a) -- (d) hold for all $p$.

\appendix

\section{Stein operators for univariate Gaussian Hermite polynomials}\label{appendixa}

The Stein operator for $H_1(X)$ is the classical standard Gaussian Stein operator of \cite{stein}.  The Stein operator for $H_2(X)$ is a special case of the Stein operator for $aX^2+bX+c$, $a,b,c\in\mathbb{R}$, of \cite{gaunt-34}.
Also, the Stein operators (\ref{h3x}) and (\ref{h4x}) were already obtained by \cite{gaunt-34}.  All other Stein operators in this appendix 
are new.  Many more examples are given in the Supplementary Information.


\vspace{3mm}

\noindent{$H_1(X)$ and $H_2(X)$ :}	
 \begin{align*} 
y-\partial, \quad y - (2\,y + 2)\partial
 \end{align*}

\noindent{$H_3(X)$ :}
\begin{align}\label{h3x}
   y -6\partial
   -99\,y\partial^{2}
   +(216-27\,y^2)\partial^{3}
   +486\,y\partial^{4}
   +(486\,y^2-1944)\partial^{5} 
\end{align}
 \begin{align*}  
 ( 290\,y-y^3 )
+ ( 528\,y^2-1560)\partial
+ ( 243\,y^3-1404\,y )\partial^{2}
+ ( 27\,y^4-648\,y^2+2160 )\partial^{3}
  \end{align*}

\noindent{$H_4(X)$ : }
\begin{align} \label{h4x}
y -(24+44\,y) \partial  +(576+144\,y-16\,y^2)\partial^{2} +(192\,y^2+576\,y-3456)\partial^{3}
\end{align}
  \begin{align*}
  ( -y^2+50\,y+24 )
 + (64\,y^2+72\,y-1008 )\partial
 + (16\,y^3-48\,y^2-576\,y+1728 )\partial^{2}
  \end{align*}

\noindent{$H_5(X)$ :}
\begin{align}&y- 120\partial- 75325\,y\partial^2+ (- 81875\,y^2+7704000 )\partial^3+ (- 31250\,y^3+270600000\,y)\partial^4\nonumber\\
&+ (- 3125\,y^4 + 527800000\,y^2 - 39086400000)\partial^5+ (280000000\,y^3 - 155065000000\,y)\partial^6 \nonumber\\
&+ (35000000\,y^4 - 241335000000\,y^2 + 14306880000000)\partial^7\nonumber\\
&+ (- 198750000000\,y^3+53403600000000\,y )\partial^8\nonumber\\
&+ (- 33125000000\,y^4 + 34950000000000\,y^2 - 1170432000000000)\partial^9\nonumber\\
&+ (39000000000000\,y^3 - 10843200000000000\,y)\partial^{10}\nonumber\\
& + (9750000000000\,y^4 - 6696000000000000\,y^2 + 352512000000000000)\partial^{11}\nonumber\\
& + (- 2160000000000000\,y^3+622080000000000000\,y )\partial^{12}\nonumber\\
&+(- 1080000000000000\,y^4 + 622080000000000000\,y^2 - 29859840000000000000)\partial^{13}\nonumber
\end{align}
 \begin{align*}
& (y^9-104800744\,y^7+174104044032\,y^5-82431615212544\,y^3+9617056740900864\,y) 
\\& +(-83053520\,y^8+191761742080\,y^6-148596701936640\,y^4
+33440484399022080\,y^2\\
&-868706901405204480)\partial
 \\& +(-23029125\,y^9+72332912000\,y^7-88767223008000\,y^5
 +32039796049920000\,y^3\\
 &-1984593650909184000\,y)\partial^{2}
  \\& +(-2831875\,y^{10}+11857320000\,y^8-22211556000000\,y^6
  +11983543971840000\,y^4\\
  &-1826589574103040000\,y^2+54875902433034240000)\partial^{3}
  \\& +(-156250\,y^{11}+855800000\,y^9-2353387200000\,y^7
  +1868056934400000\,y^5\\
  &-530407371571200000\,y^3+36302379968102400000\,y)\partial^{4}
  \\ &+(-3125\,y^{12}+22000000\,y^{10}-85519200000\,y^8
+99156326400000\,y^6\\
&-65065321267200000\,y^4+19243712957644800000\,y^2-849260402284953600000)\partial^{5}  
\end{align*}

\noindent{$H_6(X)$ :}
\begin{align} &
y + (- 1278\,y - 720)\partial + (- 972\,y^2 + 103320\,y + 756000)\partial^2 \nonumber \\ &
+ (- 216\,y^3 + 228960\,y^2 + 16491600\,y - 120528000)\partial^3 \nonumber \\ &
+   (71280\,y^3 + 6771600\,y^2 - 307152000\,y - 3265920000)\partial^4  \nonumber \\ &
+   (- 314928000\,y^2 - 19945440000\,y + 125971200000)\partial^5 \nonumber \\ 
&+   (- 209952000\,y^3 - 19945440000\,y^2 + 251942400000\,y + 7558272000000)\partial^6 \nonumber    
   \end{align} 
  \begin{align*} &
 ( 15303970800 y - 252586320 \, y^2 -6227803 \, y^3 + 599 \, y^4 ) 
  \\ &
 +  ( -6722792640000 -28723248000 \, y   +   30858084000 \, y^2   - 247410960 \, y^3  -6390132 \, y^4) \partial 
 & \\ & 
 + (  -25152249600000     -8314215840000 \, y  +  29111400000 \, y^2 +  14157844200   \, y^3       
  \\  &
 -43020180\, y^4 -1746684\, y^5  ) \partial^2 
 & \\ & 
+ (  1173771648000000   + 27946944000000 \, y     -3912572160000 \, y^2  -13197168000 \, y^3  
\\   &
+ 1633473000\, y^4     -129384 \, y^6 ) \partial^3
 \end{align*} 

\section{Auxiliary lemma}\label{appendixc}
	
	\begin{lem} \label{lemma:conditional:expectation}
	Fix $d=1$.  Let $X\sim N (0,1)$. Assume that $g(X),h(X)\in \mathbb{K}[X]$ with $\mathbb{K}=\mathbb{Q}, \mathbb{Z}$.  
	Then with $Y=h(X)$,
	\begin{align}\label{assumption:root:lemma}
	  \mathbb{E}\big[ g(X) \,  \big \vert \,  h(X) = y\big] =0, \quad \forall y \in \mathrm{supp}(Y)\;  
	\end{align}
if and only if one of the two
conditions below are satisfied:
\begin{enumerate}
\item[(a)]  $g\equiv 0$, i.e., $g$ is identically the zero polynomial.
\item[(b)]  $g$ is an odd polynomial and  $h$ is an even polynomial, meaning that  $g(-x)= - g(x)$ and $h(x)=h(-x)$ for all $x\in\mathbb{R}$.
\end{enumerate}
\end{lem}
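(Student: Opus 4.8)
\textbf{Proof plan for Lemma \ref{lemma:conditional:expectation}.}

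The plan is to prove both directions, with the nontrivial content being the forward implication \eqref{assumption:root:lemma} $\Rightarrow$ (a) or (b). First I would dispatch the easy direction: if $g \equiv 0$ the conditional expectation is trivially zero, and if $g$ is odd and $h$ is even, then for each $y$ in the support the fibre $h^{-1}(y)$ is symmetric under $x \mapsto -x$ (since $h(-x) = h(x)$), while the conditional law of $X$ given $h(X) = y$ is likewise symmetric because the standard Gaussian density is even; pairing $x$ with $-x$ and using $g(-x) = -g(x)$ shows the conditional expectation vanishes. This last point can be made rigorous either via the explicit formula for the conditional density on each fibre, or by observing that $(X, h(X)) =_d (-X, h(-X)) = (-X, h(X))$, so $\E[g(X) \mid h(X)] = \E[g(-X) \mid h(X)] = -\E[g(X) \mid h(X)]$ almost surely.

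For the forward direction, I would argue by contrapositive: assume $g \not\equiv 0$ and that we are \emph{not} in case (b), and show \eqref{assumption:root:lemma} fails. Write $g = g_{\mathrm{even}} + g_{\mathrm{odd}}$ as the decomposition into even and odd parts. The condition $\E[g(X) \mid h(X) = y] = 0$ for all $y$ in the support is equivalent to $\E[g(X) f(h(X))] = 0$ for all bounded measurable $f$, i.e. $g(X) \perp \KK[h(X)]$ in $L^2$ (using density of polynomials, valid since everything has all Gaussian moments). Now split into the two cases according to the parity of $h$. If $h$ is \emph{not} even, I would produce an explicit even test polynomial $f$ of $h$ — for instance one can use that $\mathrm{supp}(Y)$ contains a point $y_0$ with $h^{-1}(y_0)$ consisting of points not symmetric about $0$ — and derive a contradiction by localising near such a fibre (using continuity of conditional expectations, or polynomial interpolation of indicator-like functions). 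If $h$ \emph{is} even but $g$ has a nonzero even part $g_{\mathrm{even}} \not\equiv 0$, then $g_{\mathrm{even}}(X) \in \KK[X^2]$; I would show that $\KK[X^2]$ is contained in the $L^2$-closure of $\KK[h(X)]$ when $h$ is even of degree $\geq 2$ — more precisely, $\E[g_{\mathrm{even}}(X) f(h(X))]$ can be made nonzero by choosing $f$ appropriately, because on the quotient by $x \sim -x$ the map induced by $h$ is a nonconstant polynomial in $x^2$, hence generates enough functions to detect any nonzero element of $\KK[X^2]$ (the odd part $g_{\mathrm{odd}}$ automatically integrates to zero against even functions of $h$, so it contributes nothing and is unconstrained — which is exactly case (b)).

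The main obstacle I anticipate is the case $h$ even, $g_{\mathrm{even}} \not\equiv 0$: one needs to show that the only polynomials orthogonal to $\KK[h(X)]$ are the odd ones, which amounts to a statement about the image of the substitution homomorphism $p(y) \mapsto p(h(x))$ inside $\KK[x^2]$. The clean way is to work with the change of variable $u = x^2$: then $h(x) = \tilde h(u)$ for a polynomial $\tilde h$ of degree $\deg(h)/2 \geq 1$, and $X^2$ has a density (a $\chi^2$/Gamma-type law); orthogonality of $g_{\mathrm{even}}(X) = \hat g(X^2)$ to all $f(\tilde h(X^2))$ forces $\E[\hat g(U) f(\tilde h(U))] = 0$ for all $f$, where $U = X^2$. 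Since $\tilde h$ is a nonconstant polynomial, it is finite-to-one, and a conditioning/fibre argument on the line (now with at most $\deg \tilde h$ preimages, and with a strictly positive density on $(0,\infty)$) shows $\hat g$ must vanish on the support of $U$, hence $\hat g \equiv 0$, i.e. $g_{\mathrm{even}} \equiv 0$, a contradiction. The remaining bookkeeping — handling the boundary behaviour at $u = 0$ and confirming the density arguments are valid over $\KK = \Q, \Z$ by passing through $\R$ — is routine. One should also note at the outset that the statement is symmetric in a useful way: replacing $g$ by $\delta^{-1}$-type operations is not needed, and the whole argument is purely about $L^2(\gamma_1)$ geometry and the parity group $\{\pm 1\}$ acting on $\R$.
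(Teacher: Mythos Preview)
Your easy direction is fine, but the forward direction has genuine gaps. In the case ``$h$ not even'' you only say you would ``localise near a fibre'' where $h^{-1}(y_0)$ is not symmetric; this is not an argument. Take $h(x)=x^2+x$: for every large $y$ the fibre consists of two points $r_\pm=(-1\pm\sqrt{1+4y})/2$, and the vanishing of the conditional expectation is the constraint $g(r_+)e^{-r_+^2/2}+g(r_-)e^{-r_-^2/2}=0$, a genuine two-point relation that ``localisation'' or ``polynomial interpolation of indicator-like functions'' does nothing to exclude. In the case ``$h$ even, $g_{\mathrm{even}}\not\equiv0$'', your reduction to $U=X^2$ and a polynomial $\tilde h$ is correct, but the claimed ``conditioning/fibre argument shows $\hat g$ must vanish'' is exactly the problem you started with, transported to the $\chi^2_1$ law, and you give no argument for it. (It \emph{can} be rescued by noting that for $|v|$ large the equation $\tilde h(u)=v$ has a single root in $(0,\infty)$, forcing $\hat g$ to vanish at infinitely many points --- but this observation is the whole content, and it is absent from your sketch.) Most tellingly, you never use the hypothesis $\KK=\Q,\Z$, dismissing it as routine bookkeeping.

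The paper's proof is quite different and uses that hypothesis essentially. It splits on the parity of $\deg h$, not on whether $h$ is even. If $\deg h$ is odd, then for $|y|$ large the fibre $h^{-1}(y)$ is a single point, so $g$ vanishes infinitely often and is identically zero. If $\deg h$ is even, then for large $|y|$ (in the appropriate direction) the fibre consists of exactly two real points $r_1(y)<0<r_2(y)$, and the vanishing condition becomes
\[
\frac{g(r_1(y))}{g(r_2(y))}=-\exp\!\Bigl(\tfrac{r_2(y)^2-r_1(y)^2}{2}\Bigr).
\]
For $y\in\Q$ the $r_i(y)$ are algebraic numbers (here $\KK=\Q,\Z$ is used), so the left side is algebraic; by the Lindemann--Weierstrass theorem the right side is transcendental unless $r_2(y)^2=r_1(y)^2$, forcing $r_1(y)=-r_2(y)$ for infinitely many $y$ and hence $h$ even and $g$ odd. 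Your $L^2$-orthogonality framework contains nothing that plays the role of this transcendence step.
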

\begin{proof} 
 Note that \eqref{assumption:root:lemma}  is equivalent to
\begin{align}
\label{assumption:root:lemma:1}
 \sum_{x\in\mathbb{R}: h(x)=y } g(x)e^{-x^2/2 } = 0,   \quad \forall y\in \mathrm{supp}(Y).
\end{align} 
Next we discuss on the degree of polynomial $h$.  Suppose first that $h$ has odd degree, in which case $h$ is not an even polynomial.
Then for all $y$ with $\vert y \vert $ large enough the algebraic equation $h(x)=y$ has one and only one real root, let us say $r(y)$, and moreover the mapping  $y\mapsto r(y)$ is strictly monotone  outside
a compact interval. Then relation \eqref{assumption:root:lemma:1} yields that the polynomial $g$ has infinitely many real roots, which implies that  $g\equiv 0$.

Now, consider the case with $h$ of even degree. Then, for all $\vert y\vert $ large enough,  depending on the signs of $y$ and  of the leading coefficient of $h$ either equation $h(x)=y$ has no real roots or it
has only two real roots, let us say $r_1(y), r_2(y)$ so that  $r_1(y) < 0 < r_2(y)$. Moreover, as before the mappings  $y\mapsto r_1(y), y\mapsto r_2(y)$ are strictly monotone outside a compact interval. Next assume that $g \not\equiv 0$. In the latter case relation \eqref{assumption:root:lemma:1} means that for all large enough $y$:
 \begin{align}\label{eq:root:lemma:1}
    \frac{g(r_1(y) )}{ g(r_2(y) ) } = -\exp\biggl( \frac{r_2(y)^2-r_1(y)^2}{2} \biggr).
 \end{align}
Now, by choosing $y\in \mathbb{Q}$, the roots $r_j(y)$, $j=1,2$, of the algebraic equations $h(x)=y$, are algebraic numbers (because $\mathbb{K}=\mathbb{Z},\mathbb{Q}$), and hence the left-hand side of relation \eqref{eq:root:lemma:1} is an algebraic number since the quotient of two algebraic numbers is again an algebraic number. This implies that the right hand side of relation  \eqref{eq:root:lemma:1} is an algebraic number and that in virtue of  the Lindemann-Weierstrass theorem \cite[Theorem 1.4]{baker} this only happens when $r_2 (y) = \pm r_1 (y)$.  But our sign justification  yields that must be $r_1(y)=-r_2(y)$ for all large enough $y\in\mathbb{Q}$.  Therefore, from the relation  \eqref{eq:root:lemma:1} one can infer that $g(r_2(y)) = -  g (r_1 (y))$ for all large enough $y\in\mathbb{Q}$. Now consider polynomial $G(x): =  g (x)  + g (-x)$. Then, this polynomial would have infinitely many roots, and hence $G(x)=0$ for all $x \in \mathbb{R}$, i.e., $g$ is an odd polynomial.  Looking to a new polynomial $H(x) := h(x) - y$, and with a similar argument one can conclude that $h(x) = h(-x)$ for all $x \in \mathbb{R}$, i.e., $h$ is an even polynomial. 
\end{proof}

\if 0
\begin{lem}\label{lem:Gaussian-All-Algebraic}
Let $X \sim N(0,1)$. Then, $APSO(X)= PSO(X)$.
\end{lem}
\begin{proof}
Obvious. 
	\end{proof}
\begin{lem}\label{lem:Gaussian_all-First-Order-Algebraic}
Let $X \sim N(0,1)$. Let $p_0, p, q \in \mathbb{K}[x]$ be  polynomials in $x$ and let $r (x) = \frac{p(x)}{q(x)}$. Assume further that 
\begin{equation}\label{eq:IBP-Gaussian-Rational}
\mathbb{E} \left[ p_0 (X)f(X) + r(X) f'(X) \right]=0
\end{equation}	
for all absolutely continuous functions $f:\mathbb{R} \to \mathbb{R} $ such that $\mathbb{E}  \vert  r (X) f' (X) \vert < + \infty$. Then $r \in \mathbb{K}[x]$ is necessarily a polynomial in the variable $x$.
\end{lem}
\begin{proof}
First, we note that relation \eqref{eq:IBP-Gaussian-Rational} gives that $\mathbb{E}[p_0(X)]=0$. Next,  using Gaussian integration by parts we obtain that $\mathbb{E} \left[ \left(  \Gamma_X (p_0(X)) + r(X) \right) f' (X) \right]=0$ for all functions $f$ with the properties as mentioned in the statement of the lemma. This implies that $\mathbb{E} \left[  \Gamma_X(p_0(X)) + r(X) \, \vert\, X \right] = \Gamma_X(p_0(X)) + r(X)=0$ almost surely. We conclude by noticing that $\Gamma_x (p_0(x)) \in \mathbb{K}[x]$ is  a polynomial in the variable $x$.
	\end{proof}
\fi

\section*{Acknowledgements}
We would like to thank Ivan Nourdin for first bringing to our attention the fascinating problem of finding Stein operators for $H_n(X)$, $n\geq3$.  Without these initial conversations, this paper would not exist. EA would also like to thank Peter Eichelsbacher and  Yacine Barhoumi-Andr\'eani for many stimulating discussions on Stein's method. We also would like to
thank our enthusiastic readers Kaie Kubjas and Luca Sodomaco for their comments and remarks. We thank anonymous referees for their constructive comments that improved the quality of this paper.
RG was supported by a Dame Kathleen Ollerenshaw Research Fellowship.

\newpage

\huge

\appendix

\title{ \bf Supplementary Information}

\normalsize

\section{MATLAB implementation}\label{sec3.4}

This appendix is devoted to a concise description of the implementation of the null controllability approach in \texttt{MATLAB} to find polynomial Stein operators for Gaussian polynomial distributions of the form $Y=h(X)$ in the univariate case $(d=1)$, for which our code returns many new polynomial Stein operators. Some of them are listed in Appendices \ref{appendixa} and \ref{apring}. Our exposition in this appendix mostly focuses on polynomial Stein operators of the form 
\[\mathcal{S} = \sum_{t=0}^{T} p_t \partial^t\]
having  zeroth-order term $p_0 (y) = cy$ for some constant $c$. These are often the most useful Stein operators, in particular, in the context of the Nourdin-Peccati Malliavin-Stein approach.  In Remark \ref{rem:MATLAB_code_General_0Order_Term}, we describe how the implementation of our code is modified for generic zeroth-order coefficients $p_0(y)$. 

\vspace{3mm}

The body of the \texttt{MATLAB} code consists of the following:
\begin{description}
	\item[(a)] {\bf Input variables}: the code requires the following three input variables,
	\begin{itemize}
		\item Polynomial $h=h(x) \in \KK[x]$; this input variable corresponds to the target distribution $Y=h(X)$, a Gaussian polynomial random variable with $X \sim N (0,1)$. We further assume that the polynomial $h$ is chosen in a such way that $\E[Y]=0$. Prototypical examples are $h=H_p$, the Hermite polynomials with $p \ge 1$.
		\item Variable $T$ stands for the maximum time horizon to check the null controllability, and corresponds to order of the polynomial Stein operator. The program starts checking the null controllability with $t=1$ onwards until $t=T$, and stops at step $t$ as soon as it finds a null control sequence. Otherwise, the code returns with an error in the case there is no a null control sequence of the maximum  length $T$. In that case, one has to enlarge the time horizon.
		\item Variable $m$ is the maximum degree of the null control sequence $(p_t (y) \, : \, t=0,\ldots,T)$ in the target variable $y$, i.e. $m = \max_{t=0,\ldots,T} \deg (p_t)$. In the language of algebraic geometry, the input variable $m$ yields the control state to be $\KK_m[y]$ (Remark 3.7, item (ii)).
		Again, the code returns with an error in the case there is no a polynomial null control sequence with maximum degree $m$. In that case, again, one has to enlarge $m$.
	\end{itemize}

	\begin{lstlisting}[
	style      = Matlab-editor,
	basicstyle = \mlttfamily,
	]{stein_chain.m}
	%%% input variables %%%
	h=h(x);     
	%%%  corresponds to the target Gaussian polynomial  Y= h(N) with E[Y]=0, for example h=hermite(4,x) %%%
	T=10;           %%% time horizon %%%
	m=4;            %%% maximum degree of  polynomial coefficients of Stein operator  in the target varaible y=h(x) %%%
	\end{lstlisting}

	\item[(b)] {\bf Output variables} :
	
	\begin{itemize}
		\item  Vector $\texttt{uu= [c,u]}$ stores the coefficients of the polynomials $(p_t : t=0,\ldots,T)$ of the found Stein operators. This output variable is useful when degree of the input polynomial $h$ is large, translating to Hermite polynomials $h=H_p$, when  $p$ is large. This enable us to observe semi-directly the polynomial coefficients of the found Stein operator, and avoid the printing since it is quite time consuming.
		\item Variable  \texttt{t} in the workspace assigns the order of the found Stein operator.  
		\item Variable \texttt{m1} in the workspace assigns the maximum degree of the polynomial coefficients associated to the found Stein operator. 
		\item The end-part of the code is devoted to the printing of the found Stein operator in \LaTeX as well as the symbolic format $\sum_{t=0}^{T} p_t (y) D^t$. This is done through the output variable \texttt{stein_eq}. 
	\end{itemize} 
	\begin{ex}
		(a) With input variables $h(x)=H_4(x)= x^4 -6x +3$, $T=5$, and $m=2$, the \texttt{MATLAB} code in the workplace returns the above listed output variables as follows: \[ \texttt{uu} = [ \underbrace{1}_{\text{coefficients of } p_0}, 	\underbrace{-24,	-44,	0, 0}_{\text{coefficients of } p_1}\, ,   	\underbrace{576,	144,	-16, 0, 0}_{\text{coefficients of } p_2}\, ,	\underbrace{-3456,	576, 192, 0, 0}_{\text{coefficients of } p_3}]. \] 
		This corresponds to polynomial coefficients: $p_0(y)=y$, $p_1(y)= -24 -44 y$, $p_2(y)=576 + 144y-16y^2$ and $p_3(y)=-3456+576y+192y^2$, as well as $\texttt{t}=3$, $\texttt{m1}=2$, and 
		\[  \texttt{(192*y^2+576*y-3456)*D^3+(-16*y^2+144*y+576)*D^2+(-44*y-24)*D+y}. \] 
		This is the Stein operator (\ref{h4x}), which is listed in  Appendix \ref{app:cyf(y)}.
		
		\vspace{3mm}		
		
		\noindent{(b)} With input variables $h(x)=H_3(x)= x^3 -3x$, $T=4$, and $m=4$, the \texttt{MATLAB} code in the workplace returns the above listed output variables as follows:
		\begin{align*} &\texttt{uu} =\\
			& [\underbrace{5}_{\text{coefficients of } p_0},	\underbrace{-12,		0,		-3,		0,		0}_{\text{coefficients of } p_1} \, , \underbrace{	0,	207,		0,		0}_{\text{coefficients of } p_2}\, , \underbrace{			-1080,		0,		351,		0,		0}_{ \text{coefficients of } p_3}\, , 		\underbrace{	0,	-324,		0,		81,		0, 0}_{\text{coefficients of } p_2}]
			. 
		\end{align*}
		This corresponds to polynomial coefficients: $p_0(y)=5y$, $p_1(y)= -12 -3 y^2$, $p_2(y)= 270y$, $p_3(y)=-1080+351y^2$ and $p_4(y)=-324y+81y^3$, as well as $\texttt{t}=4$, $\texttt{m1}=3$ and 
		\[ \texttt{(81*y^3-324*y)*D^4+(351*y^2-1080)*D^3+207*y*D^2+(-3*y^2-12)*D+5*y}. \] 
		This is the new Stein operator (\ref{h3x-new}), which is listed in  Appendix \ref{app:cyf(y)}.
	\end{ex}
	
	\item[(c)] {\bf Pseudocode for the null controllability problem} :
	
	The following Pseudocode constitutes the main body of the code. It consists of two parts. The first part implements in a recursive method the crucial relation \eqref{linear:controllability:eq} to find a null control sequence starting from $t=0$ until $t=T$. The second part checks that the moment conditions \eqref{eq:moments-check} are satisfied. As above, input variable $T$ is the upper bound on the order of the Stein operator, $\Lambda$ is the control matrix and $\Gamma$ is the
	evolution matrix. \texttt{Nmoments} is the vector $\E[ X^k]_{k\ge 1}$, $v_0$ is the coefficient  vector of the given zeroth-order polynomial term $p_0(h(x))$
	in the independent variable $x$ satisfying $\E[ p_0( h(X) )]=0$. For example, when $h=H_4 (x)=x^4-6x^2+3$, and $p_0(y)=cy$, then $v_0 = c(3,0,-6,0,1)$. 
	
	\begin{lstlisting}[
	style      = Matlab-editor, mathescape,
	basicstyle = \mlttfamily, 
	caption={[]Pseudocode for the null controllability problem:}
	]{stein_chain.m}
	$t=0$ 
	$v=v0$
	$A=\Lambda$
	go_on=true
	while go_on
	$t=t+1$
	$v=\Gamma v$
	$u=$linsolve$(A,-v)$  %%% solving  Au=-v
	if $(t<T)$&$(Au=-v$ has no solutions$)$
	$A=[\Gamma*A \quad \Lambda']$   %%% matrix concatenation
	go_on=true
	else
	go_on=false
	end    
	end
	if ($Au=-v$ has solution after $t\leq T$ steps)   
	%%%   imposing moment conditions on the polynomial coefficients
	$v=v0$
	index=1:(m+1)
	for(tau=1:(t$-$1))
	$v=\Lambda u($index$)+\Gamma v$
	$u$(index(1))=($u$(index(1))$-$Nmoments$(1:Q)v)$ 
	index=index+(m+1)
	end
	end
	\end{lstlisting}
	
	\begin{rem}\label{rem:MATLAB_code_General_0Order_Term}
		The algorithm above for a given target $Y=h(X)$
		takes as input the zeroth-order polynomial coefficient $p_0(h(x))$, with 
		$\E[p_0( h(X) ) ]=0$. In order to find a Stein operator with generic $p_0(y)$ 
		we run the algorithm several times with the respective  initial coefficients
		$p_0^{(k)}(y)= y^k - \E[ h(X)^k]$, in order to obtain the corresponding
		Stein operators satisfying
		\begin{align*}
			\E\bigl[  {\mathcal S}^{(k)} f( Y) \bigr]=-\E\bigl[ p_0(Y) f(Y)\bigr]
		\end{align*}
		for $k=1,\dots, m_0$. When the Stein chains end successfully
		and Stein operators are found for the initial coefficients above,
		by solving a linear system for
		the linear combination one can further reduce the order
		or the maximal coefficient degree  of a Stein operator
		with a generic zeroth-order polynomial coefficient $p_0(y)$ of degree $\le m_0$.
	\end{rem}
\end{description}

\section{Stein operators for univariate Gaussian Hermite polynomials}\label{appendixa}

The Stein operator for $H_1(X)$ is the classical standard Gaussian Stein operator of \cite{stein}.  The Stein operator for $H_2(X)$ is a special case of the Stein operator for $aX^2+bX+c$, $a,b,c\in\R$, of \cite{gaunt-34}, as is the Stein operator for $H_1(X)+H_2(X)$ given in Appendix \ref{apring}.  Also, the Stein operators (\ref{h3x}) and (\ref{h4x}) were already obtained by \cite{gaunt-34}.  All other Stein operators in this appendix and Appendix \ref{apring} are new. Some of the Stein operators given in this appendix are also given in Appendix A. We repeat them here so that all the Stein operators are collected together in a single appendix.  

\subsection{Zeroth-order term $cyf(y)$}\label{app:cyf(y)}

\noindent{$H_1(X)$ and $H_2(X)$ :}	
\begin{align*} 
	y-\partial, \quad y - (2\,y + 2)\partial
\end{align*}

\noindent{$H_3(X)$ :}
\begin{align} \label{h3x-new}
	5\, y -(3\,y^2+12)\partial   +207\,y\partial^{2}
	+(351\,y^2-1080)\partial^{3}
	+(81\,y^3-324\,y)\partial^{4}
\end{align}
\begin{align}\label{h3x}
	y -6\partial
	-99\,y\partial^{2}
	+(216-27\,y^2)\partial^{3}
	+486\,y\partial^{4}
	+(486\,y^2-1944)\partial^{5} 
\end{align}

\noindent{$H_4(X)$ : }
\begin{align} \label{h4x}
	y -(24+44\,y) \partial  +(576+144\,y-16\,y^2)\partial^{2} +(192\,y^2+576\,y-3456)\partial^{3}
\end{align}

\noindent{$H_5(X)$ :}
\begin{align*} 
	&15580403168538081808023552 \,y \\
	&+(-319179359200955\,y^8+17296743383000046809080\,y^6\\
	&-30453944634963174391774080\,y^4+8378454869686262588172134400\,y^2\\
	&-171397515591740804005791498240)\partial
	& \\ & +(16154152521786318001600\,y^7-41918287476242535868569600\,y^5\\
	&+18160711517167770618284313600\,y^3-1239101680729174664564404224000\,y)\partial^{2}
	& \\ & +(5177408385598691055000\,y^8-19166255615207862008920000\,y^6\\
	&+13403727175244880138330240000\,y^4
	-2442403318372552645917235200000\,y^2\\
	&+65725928416658664921713541120000)\partial^{3}
	& \\ & +(724337658286667253125\,y^9-3713592558790019185200000\,y^7\\
	&+4280839659909236338428000000\,y^5
	-1379071236058382967127603200000\,y^3\\
	&+66328376785356945138012979200000\,y)\partial^{4}
	& \\ & +(44884597387634296875\,y^{10}-310405490799058194000000\,y^8\\
	&+580995055104909396324000000\,y^6-283708061282453615759001600000\,y^4\\
	&+33287571883579035278551449600000\,y^2-906872445506697193065598156800000)\partial^{5}
	& \\ & +(997435497502984375\,y^{11}-9089418036906323000000\,y^9\\
	&+26589273781119330420000000\,y^7-19596186937165037285376000000\,y^5\\
	&+4565266162550649874870272000000\,y^3-190391529130477012565360640000000\,y)\partial^{6}
\end{align*}
\begin{align*} 
	& 3387150029792y   + (571033901y^4 - 547887964579112y^2 + 27078999070435200)\partial   
	\\ & + (378923502108829200y - 1502636158422420y^3)\partial^2 
	& \\ & +  (- 1206880149763500y^4 + 576041545375798000y^2 - 17414865657146035200)\partial^3
	& \\ & + (- 367959969956875y^5 + 443885745185225000y^3 - 77181050945393280000y)\partial^4
	& \\ & + (- 44612023515625y^6 + 158102370728325000y^4 - 72273712209585600000y^2\\
	& + 3073114984002508800000)\partial^5
	& \\ & +  (- 1784480940625y^7 + 20659537391900000y^5 - 19467313520040000000y^3\\
	&+ 2959552681754572800000y)\partial^6
	& \\ & +
	(732950666875000y^6 - 4688334677088000000y^4 + 2477569953587328000000y^2 \\
	&- 117950656010379264000000)\partial^7   
\end{align*} 
\begin{align}&y- 120\partial- 75325\,y\partial^2+ (- 81875\,y^2+7704000 )\partial^3+ (- 31250\,y^3+270600000\,y)\partial^4\nonumber\\
	&+ (- 3125\,y^4 + 527800000\,y^2 - 39086400000)\partial^5+ (280000000\,y^3 - 155065000000\,y)\partial^6 \nonumber\\
	&+ (35000000\,y^4 - 241335000000\,y^2 + 14306880000000)\partial^7\nonumber\\
	&+ (- 198750000000\,y^3+53403600000000\,y )\partial^8\nonumber\\
	&+ (- 33125000000\,y^4 + 34950000000000\,y^2 - 1170432000000000)\partial^9\nonumber\\
	&+ (39000000000000\,y^3 - 10843200000000000\,y)\partial^{10}\nonumber\\
	& + (9750000000000\,y^4 - 6696000000000000\,y^2 + 352512000000000000)\partial^{11}\nonumber\\
	& + (- 2160000000000000\,y^3+622080000000000000\,y )\partial^{12}\nonumber\\
	&+(- 1080000000000000\,y^4 + 622080000000000000\,y^2 - 29859840000000000000)\partial^{13} \nonumber
\end{align}

\noindent{$H_6(X)$ :}
\begin{align*} & 599 \,y +(-218\,y^3+913612\,y^2+53550492\,y-281527920)\partial   \\ & +(1336776\,y^3+104875908\,y^2-1387746360\,y-28764115200)\partial^{2} 
	& \\ & +(494424\,y^4+41703336\,y^3-1035418680\,y^2-29104855200\,y+158972544000)\partial^{3}
	& \\ & +(47088\,y^5+3490128\,y^4-321541920\,y^3-16820071200\,y^2+241351488000\,y\\
	&+6178654080000)\partial^{4}
\end{align*} 
\begin{align*} &
	631y + (- 1632y^2 + 1469778y + 720720) \partial   + (4106412y^2 + 245648160y - 1739944800)\partial^2   \\ & +  (2507544y^3 + 227216880y^2 - 4235230800y - 79890624000)\partial^3 & \\ & + (352512y^4 + 37318320y^3 - 2919790800y^2 - 198457128000y + 1006369920000)\partial^4 & \\ & 
	+(- 1907064000y^3 - 181171080000y^2 + 2288476800000y + 68654304000000)\partial^5       
\end{align*} 
\begin{align} &
	y + (- 1278\,y - 720)\partial + (- 972\,y^2 + 103320\,y + 756000)\partial^2 \nonumber \\ &
	+ (- 216\,y^3 + 228960\,y^2 + 16491600\,y - 120528000)\partial^3 \nonumber \\ &
	+   (71280\,y^3 + 6771600\,y^2 - 307152000\,y - 3265920000)\partial^4  \nonumber \\ &
	+   (- 314928000\,y^2 - 19945440000\,y + 125971200000)\partial^5 \nonumber \\ 
	&+   (- 209952000\,y^3 - 19945440000\,y^2 + 251942400000\,y + 7558272000000)\partial^6  \nonumber   
\end{align}

\noindent{$H_7(X)$ :}
\begin{align*}
	&y -5040\partial
	-133451451\,y\partial^{2}
	+(1064591216640-306955845\,y^2)\partial^{3}
	\\& +(849390311141064\,y-306156312\,y^3)\partial^{4}
	\\& +(-116825457\,y^4+2701312236798840\,y^2-8436683256242088960)\partial^{5}
	\\ & +(-17294403\,y^5+3327495514534800\,y^3-852383157281913430656\,y)\partial^{6}
	\\ & +(-823543\,y^6+1495553659757640\,y^4-2882447388687770112384\,y^2\\
	&+7910371125237853207756800)\partial^{7}
	\\ & +(251232195145176\,y^5-4800644752916327503104\,y^3\\
	&+505980701265753819516063744\,y)\partial^{8}
	\\ & +(13222747112904\,y^6-2727340858843315440768\,y^4
	+837637190655055545978961920\,y^2\\
	&-1777134045147718167454078402560)\partial^{9}
	\\ & +(-536928170359162643712\,y^5+1281752157950575496201318400\,y^3\\
	&-110523786891733518638811362611200\,y)\partial^{10}
	\\ & +(-31584010021127214336\,y^6+913589345116795680281088000\,y^4\\
	&-176461821057357744953672419061760\,y^2+340345632611067543386816669260185600)\partial^{11}
	\\ & +(214893775695296588083845120\,y^5
	-191362567154678131174434027601920\,y^3\\
	&+14300407319501324117284327929271910400\,y)\partial^{12}
	\\ & +(14326251713019772538923008\,y^6
	-170427454945767764472321169551360\,y^4\\
	&+32709717771471627211087752733986816000\,y^2\\
	&-65419147430764591344326824530412044288000)\partial^{13}
	\\ & +(-49486748509729074183341479464960\,y^5
	+16693019644892749062546112354050048000\,y^3\\
	&-439054884730049001199187142951434649600000\,y)\partial^{14}
	\\ & +(-3806672962286851860257036881920\,y^6
	+7954423881903475224494820925678387200\,y^4\\
	&-1284347964264936234207319478284579307520000\,y^2\\
	&+2375893125471619127467751045131020533760000000)\partial^{15}
	\\ & +(2626852987014389750244682334756044800\,y^5\\
	&-690264828368716626786026864105558507520000\,y^3\\
	&+3957829778041610249387356594675097665536000000\,y)\partial^{16}
	\\ & +(238804817001308159113152939523276800\,y^6\\
	&-177049564531965021489200207455680921600000\,y^4\\
	&+18066493172730934812816336030401279950848000000\,y^2\\
	&-26665162994119386969582865827453121131970560000000)\partial^{17}
	\\ & +(-58746597218584833047955018294421094400000\,y^5\\
	&+14203481064016267590268922250061037174784000000\,y^3\\
	&-6268263770190783711518376340458529515110400000000\,y)\partial^{18}
	\\ & +(-6527399690953870338661668699380121600000\,y^6\\
	&+2686174908845226527905185370947095494656000000\,y^4\\
	&-104373689539033328254646376972599062953984000000000\,y^2\\
	&+49997032312081377455344473295518420864860160000000000)\partial^{19}
\end{align*}
\begin{align*}
	\\ & +(728248810874643441914034877572271177728000000\,y^5\\
	&-170689150758390923069718861514581646560460800000000\,y^3\\
	&+368356231367805828906903541263381229448724480000000000\,y)\partial^{20}
	\\ & +(104035544410663348844862125367467311104000000\,y^6\\
	&-32822200050800981502352944003864349586227200000000\,y^4\\
	&+325525490017310356319310959603263156833484800000000000\,y^2\\
	&+204605928378464524860045079684944299315272089600000000000)\partial^{21}
	\\ & +(-4349052313418574507170272863754793779200000000000\,y^5\\
	&+981649950497815495375611300357280589225656320000000000\,y^3\\
	&-4914738800909755586442003506056569993818013696000000000000\,y)\partial^{22}
	\\ & +(-869810462683714901434054572750958755840000000000\,y^6\\
	&+256185589276368632570648020348415300736122880000000000\,y^4\\
	&-2344464557378731854989940379852673228570886144000000000000\,y^2\\
	&+4185814371312008995677288668201630441603176857600000000000000)\partial^{23}
	\\ & +(7646575073392101937433120802707182599536640000000000\,y^5\\
	&-1486494194267424616636998684046276297349922816000000000000\,y^3\\
	&+7135172132483638159857593683422126227279629516800000000000000\,y)\partial^{24}
	\\ & +(2548858357797367312477706934235727533178880000000000\,y^6\\
	&-743247097133712308318499342023138148674961408000000000000\,y^4\\
	&+7135172132483638159857593683422126227279629516800000000000000\,y^2\\
	&-10274647870776438950194934904127861767282666504192000000000000000)\partial^{25} 
\end{align*}

\noindent{$H_8(X)$ :}
\begin{align*} 
	&y +(-56496\,y-40320)\partial
	+(-65920\,y^2+282800000\,y+2564997120)\partial^{2}
	\\ & +(-32768\,y^3+646180864\,y^2+721579048960\,y-20217560186880)\partial^{3}
	\\ & +(-4096\,y^4+422596608\,y^3+32899399680\,y^2-2200644332027904\,y\\
	&+4130252292096000)\partial^{4}
	\\ & +(61931520\,y^4-523683299328\,y^3-2574503360593920\,y^2-366232179868434432\,y \\ & +58545839107190292480)\partial^{5}
	\\ & +(-114809110528\,y^4-1287467746983936\,y^3-1268709319526842368\,y^2
	\\ & +1817998267095743201280\,y+22309748508410354073600)\partial^{6}
	\\ & +(-196254369841152\,y^4+569368730639794176\,y^3+2830846067667621642240\,y^2
	\\ & -604629551552531949158400\,y-54650034667549385293824000)\partial^{7}
	\\ & +(273194783751536640\,y^4+715893205099641569280\,y^3-380939584507781264179200\,y^2
	\\ & -24325934978807793451008000\,y+6733738025236644715560960000)\partial^{8}
	\\ & +(-4239094388568883200\,y^4-68279093316679001702400\,y^3\\
	&-109137943708451267936256000\,y^2
	+32788493016294424093655040000\,y\\
	&+1187169574727901562011648000000)\partial^{9}
	\\ & +(-28486714291182895104000\,y^4-75974067014584781242368000\,y^3
	\\
	& +32411613786222074391429120000\,y^2
	+2532628426086189998958182400000\,y\\
	&-531851969478099899781218304000000)\partial^{10}
\end{align*} 

\begin{align*}
	& 43881259694506575260585152325108256000\, y\\& +(758341924998600395427477\,y^6-3036680132069054017863854184592\,y^5  \\ & +1331741571102189252273068158184708\,y^4\\
	&+9939276590962003336259277645051422576\,y^3
	\\ & -26008516350439618070893556661572208948480\,y^2\\
	&-986182705922231246369585722972814390123520\,y
	\\ & +323088290686531584414330815058804702874828800)\partial
	\\ & +(-2767717490299691868790081096992\,y^6
	+2745681813845247197112891663934592\,y^5\\
	&+8688054800046337238130243859447301504\,y^4\\
	&
	-52179633604735703361563733004233081400320\,y^3\\
	&-4006558543353497248668551812037023095674880\,y^2\\&+4985984568788688257608326512776182631260979200\,y\\
	&+141211093900944841687067636916910780575989760000)\partial^{2}
	\\ & +(-795470345646531870787606273920\,y^7
	+1388929044391096197930808274579200\,y^6\\
	&+1268106261890252292545440864336329216\,y^5\\
	&
	-28615644819544623493285720342091267558400\,y^4\\
	&-2825848698037716449581319232123919549747200\,y^3\\
	&+9060918738384649710556682129785161795003187200\,y^2\\
	&-715292771515588119123981713302937352253440000000\,y\\&-106207629904083491235044661177045119718823034880000)\partial^{3}
	\\ & +(-86972718694239482150786482176\,y^8
	+231078212024682780911571646642176\,y^7\\
	&-152076843749162059500082566159024128\,y^6\\
	&-5387079504425678805357610745518014996480\,y^5\\
	&+27077797312871915258743890674262374645760\,y^4\\
	&+5163356044327843006697287768165841717113651200\,y^3\\
	&-989489331574519523427436336482810207234686976000\,y^2\\
	&-227274837232294053495518331417863538508415631360000\,y\\
	&+15412063011799677536642694195548287434134598451200000)\partial^{4}
	\\ & +(-3106168524794267219670945792\,y^9
	+11495907371738602251043051245568\,y^8\\
	&-22387748849180188556746006330867712\,y^7\\
	&-307924407132723321325224896118597599232\,y^6\\
	&+95040341941716935401536503884726932602880\,y^5\\
	&+746279388930738753144950205257832651050188800\,y^4\\
	&+15726861419239583669372242939835501912260608000\,y^3\\
	&-126295034113768734113066635718873264523092951040000\,y^2\\
	&-4386479742103932925514515787520191827399409664000000\,y\\
	&+1743269240872982562749436619330409204491480989696000000)\partial^{5} 
\end{align*}

\newpage 

\noindent{$H_{10}(X)$ :}

\begin{align*}
	&y +(-3257160\,y-3628800)\partial
	+(-5304000\,y^2+1091578185900\,y+17240138496000)\partial^{2}
	\\ & +(-4455000\,y^3+2869770870000\,y^2+60285174403756000\,y-8195889073034880000)\partial^{3}
	\\ & +(-1250000\,y^4+3038747400000\,y^3-87788841797700000\,y^2
	-24178492814093711990000\,y\\
	&+386738413261030656000000)\partial^{4}
	\\ & +(-100000\,y^5+983618100000\,y^4-307962932577300000\,y^3
	-39718345618893322400000\,y^2\\
	&+541197273193916916857500000\,y+77038387315300617650291900000)\partial^{5}
	\\ & +(86562000000\,y^5-137113471467000000\,y^4
	-20241095010910164000000\,y^3\\
	&+614674588478073576820000000\,y^2
	+38655573271292047153412480000000\,y\\
	&-1544077167741439083270654318000000)\partial^{6}
	\\ & +(-14034919410000000\,y^5-3611923606425530000000\,y^4\\
	&+1157055078389606630470000000\,y^3
	+115536421986640510338352080000000\,y^2\\
	&-1172509218379091417042657993760000000\,y-203744329919466653004638691354210000000)\partial^{7}
	\\ & +(-205210677603600000000\,y^5+637855102579767139100000000\,y^4\\
	&+56868672620562577951574600000000\,y^3
	-2272132852237024646250474124200000000\,y^2\\
	&+3128079515715829689098507996214600000000\,y\\
	&+3930024106272461026784153353683692400000000)\partial^{8}
	\\ & +(77901423775037568000000000\,y^5+658712542202571286381000000000\,y^4
	\\
	&-1292597318190806918874425331000000000\,y^3\\
	&-11812794761700707114492464433490000000000\,y^2\\
	&+129201682923315582458928485163725052000000000\,y\\&+18906431439212235333040203434783311488000000000)\partial^{9}
	\\ & +(-1142484431509952294400000000000\,y^5
	-305749394642557911039155820000000000\,y^4\\
	&-15381578151743373056375084568780000000000\,y^3\\&+295149927028628970469825239541761540000000000\,y^2\\
	&-605437932378359772612075849323517691200000000000\,y\\
	&-390628545918898871170275525535644040435200000000000)\partial^{10}
	\\ & +(-26218962977299345612800000000000000\,y^5\\
	&-1545311793922203552750743995800000000000\,y^4\\&+207378661186088279917311114396243000000000000\,y^3\\
	&-947337616600928539545447183238335921600000000000\,y^2\\
	&-6041201844736807513102134017326568627942400000000000\,y\\
	&+1616831588573884649913842569493503523008512000000000000)\partial^{11}
	\\ & +(283690569292007580831744000000000000000\,y^5\\
	&+42665527458982366228219924459476000000000000\,y^4\\
	&+619219707587936613335940342714893844000000000000\,y^3\\
	&-9516516580955534393084024006695235048832000000000000\,y^2\\
	&-13910396790204097895146662874181073365729280000000000000\,y\\
	&+12538450183646982969209287097523831183622963200000000000000)\partial^{12}
\end{align*}
\begin{align*}
	& +(2013925290187650123497472000000000000000000\,y^5\\
	&+239840927597396622568797477572567040000000000000\,y^4\\
	&-2256928407014913684359947598020212003840000000000000\,y^3\\
	&-9159749296064537337968038115498856606228480000000000000\,y^2\\
	&-325527100323490529261822120694127103095603200000000000000\,y\\
	&+11991506150131887654155034891290034767280734208000000000000000)\partial^{13}
	\\ & +(369569978445186098528256000000000000000000000\,y^5\\
	&+8850830771867151377434984714503936000000000000000\,y^4\\
	&-3743436852225958906407862862763267430656000000000000000\,y^3\\
	&+23226302082946533231478400179955443276627968000000000000000\,y^2\\
	&+53172555147902396289433672851669087631666053120000000000000000\,y
	\\&-15019660145846781121396718242753782824837106892800000000000000000)\partial^{14}
	\\ & +(-13969745185228034524368076800000000000000000000000\,y^5\\
	&-1654702318704542057314786132806225715200000000000000000\,y^4\\
	&+16406416015498811819438783476653145708339200000000000000000\,y^3\\
	&+52289565038882113895457269799258362330048102400000000000000000\,y^2\\
	&-3337702285103141267631792041784157267928442470400000000000000
	0000\,y\\
	&-63082572879429801056239769926313724421483203133440000000000000000000)\partial^{15} 
\end{align*}

\newpage

\begin{landscape}
	\noindent{$H_{12}(X)$ : }
	\begin{align*}
		&y +(-233070696\,y-479001600)\partial
		+(-501527520\,y^2+6352383744527616\,y+204648095606169600)\partial^{2}
		\\
		&
		+(-636840576\,y^3+19089382891822848\,y^2-3394883655397923928704\,y-7249139343152084444774400)\partial^{3} \\
		& +(-302351616\,y^4+29373160502264832\,y^3-85308646011131425582080\,y^2
		-602523817566740264004389743104\,y 
		\\ & +41064019096384578833796856564224)\partial^{4} \\
		& +(-53747712\,y^5+15840179948482560\,y^4-273427650869271786381312\,y^3-1316669788754566472307081885696\,y^2
		\\ & 
		+2170010164038440090108658752637868032\,y
		+309548697528486679478857232565800288256)\partial^{5}
		\\
		& +(-2985984\,y^6+3072326411759616\,y^5-192603344706608023830528\,y^4
		-560892309661592769908335767552\,y^3\\
		&
		+5116274212579508069139116444393668608\,y^2
		+1871508167720062294658849638594476994584576\,y\\
		& -1403246570516801969736707373238745847370149888)\partial^{6} 
		\\ & +(181886772215808\,y^6-42774112637105452056576\,y^5+
		132310250049749704550105776128\,y^4 \\
		& +6100892163566890030473183513504940032\,y^3
		+12185247995219038527151138220401114723123200\,y^2\\
		&-11681666228455173412459205357641945421177005473792\,y
		-2738295128641680419377595592608618596091386484981760)\partial^{7}
		\\ & +(-2748567267530379362304\,y^6+73666060474705985574221709312\,y^5+
		3451367123887058235380347068033662976\,y^4\\
		&+8195649080503140381789439036119782904692736\,y^3
		-42362348958827837192910542745637350140681572319232\,y^2\\
		&+8605805288052435217028645819998584260567085268807581696\,y+10090283275069314967490216320513438307130001993020258582528)\partial^{8} 
		\\ & +(6122969931529093833262891008\,y^6
		+765012229406473525731341611490082816\,y^5
		-2064970287577535546470829735560945847500800\,y^4\\
		&-40318034061293611210193101392041269667476460273664\,y^3
		+14715081803838418518675662117709963424788629914516979712\,y^2\\
		&+2675727389969723961227986471432673065534160692920819135283200\,y\\
		&-3405627775616047625932892137492620230683891635631185243869282304)\partial^{9}
		\\ & +(51751156056169776707893351620280320\,y^6
		-1469929441066575219994945831108173990199296\,y^5 \\&
		-12305087661174167627145989282915838084324110893056\,y^4+584
		2752166077012696722570334721588384028128144171991040\,y^3\\
		&
		+19959822612297584879093863766096386724511501378396587553718272\,y^2\\
		&-370974392074285692304211850136362507889245222926072505762761618
		2272\,y\\
		&-4262425425397040175599184428595121659767009470758388212160869892620288)\partial^{10}+ 
	\end{align*}
	
	\newpage
	
	\begin{align*} &(-139400148828847595375505273202823575633920\,y^6
		-1198110617665708229758579896899171499600456450048\,y^5\\&+3864843583243655623642649348584814778132564227446013952\,y^4
		+29204850945035863326516923657504643054893483414171947131469824\,y^3\\
		&-113230323517446671785407572316029270682042485518515524916729694
		12608\,y^2\\
		&+1312515419909214478025009333667826087304095063026659585732499391433
		93280\,y\\&+2153295952028741761283932908501607728677383634406121832787232023668846493696)\partial^{11}
		\\ & +(-23027463138850296119921836630997141664150061056\,y^6
		+1614540854760445386005973579051662961965790397137420288\,y^5\\
		&+10463994762437125780538670376093783335682135426771438968766464\,y^4\\&
		-8882286225885204709167635447927866792790882511764827220532007
		010304\,y^3\\
		&+1712462530078396982114539268435601301587597519130895155260522
		62895943680\,y^2\\
		&+32376271460679950157900528718805939630902324454206936085025687849
		8062157217792\,y\\
		&-73215262354744155216967956968845863715936225293876888907420793742682231268179968)\partial^{12}
		\\ & +(160519272911161893123315179349672359639594586788069376\,y^6
		+783957391000124418497400292797678152479143735398930451529728\,y^5\\
		&-2772356211648110431251286417307460864210898986567870555037158604
		800\,y^4\\
		&-22945330278343845440337595383088422562493168911580065733484
		43539822870528\,y^3\\
		&+10250554382635013363407008124976494382276664029360165260540444279
		78849218723840\,y^2\\
		&+40522759272397326650169812717287573278444776496207223792806837
		310380338077262413824\,y\\
		&-191323189577922031730414137928237076640693102295870007414031362545528658305782325968896)\partial^{13}
		\\ & +(-27843739476171697981780618685997614529206051307892840071168\,y^6\\
		&
		-43355029420852847512776214827693948065850463777170402003709329
		4080\,y^5\\
		&-1496672105919003175172152636369691134976836393737518735756406
		063605219328\,y^4\\
		&+5310933511639829524944852011998831152496181783109333280651531312
		03865673203712\,y^3\\
		&+66743964868774942220342749708982392339293713546929027542390760261
		477808901527175168\,y^2\\
		&+6867422932433728433986616897045299370088701065499148786784501505
		81298232291039509479424\,y\\
		&-11093384290324424891421388905615016062973277027421190297319718096764100234101735410234294272)\partial^{14}+
	\end{align*}
	
	\newpage
	
	\begin{align*}
		&(-2765073307298780331399057993417432427148798263235550296605
		6157184\,y^6\\
		&-1919367884109622613276723980544725054632732520546896733304499205
		00432896\,y^5\\
		&+2887201894635508180875000700429793534318841127681124392237482506
		3848328822784\,y^4\\
		&+5346080298330596887748734815696748570640247686930225145082936661
		8505698151887798272\,y^3\\
		&-6697382662729972182457374300779905786689691499506763324654395546
		714520583466089726345216\,y^2\\
		&-3426166289961184499929433858505345643148641543375091805933864536
		18226612647447115537089298432\,y\\
		&+869155706116682864138166059914832891289446863852185510770162845771311474897967612200903707197440)\partial^{15}
		\\ & +(-110229217660344648616227106732263572551547790631073947586686462879
		3344\,y^6\\
		&-34876340624435720982941871171485265231643122210627409753477513
		89343109873664\,y^5\\
		&+2678698074407307079541849749459713446374398391378425946833108296
		4896450338501951488\,y^4\\
		&-76954955996251396804468484966526093264758064403618275499117587
		32134226596100901798150144\,y^3\\
		&-8647988212173340465735441626976460724892326703802521661716088326
		31856134996651950394698629120\,y^2\\
		&-12324993322220043866764529513812685632683146926335758661567
		109807209912773281647759538980397252608\,y\\
		&+124417910760357151754014609393935038525550097370003037558479065559687666061731691858473897820420046848)\partial^{16}
		\\ & +(543549805276029974005398210561388341071511119572048980556219565
		230321041408\,y^6\\ &+377687005858630104409773297584561340389223295149500502091053
		3112680546297893093376\,y^5\\
		&-4651677650736016933837888253739872474358796316535598787266334
		16008003881868460602425344\,y^4\\
		&-5584048780307894426354474904873267941096052783536749933959
		06153714121751490402297891860250624\,y^3\\
		&-14701566935175991390312550425692671615673215853895624820466
		376095675933277884379186580478675124224\,y^2\\
		&+95730205921761572179042421445005142930457479230691954504984876
		8715492785776402400376440186396278259712\,y\\
		&+3580159500284274965213999250295506032937122478119078776591573901120017148989130374684574477064383949176832)\partial^{17} +
	\end{align*}
	
	\newpage
	
	\begin{align*}
		&+(2955399927136249736458047794626398611631282505673132276789334278
		7975517248159744\,y^6
		\\ &+184170677749548860070158635001220474781414959479804518512684047
		795936412924689296392192\,y^5\\
		&-151895563682802617426162056449726952653341148950151313554990525
		746259223841761950678764224512\,y^4\\
		&+2021282657899078884120428501271262937564689867027069664634888
		8251362383748998240980149483696291840\,y^3\\
		&+1694315493667460877258628702630330877147908897015076350601436
		920563245036999509801907228422264680611840\,y^2\\
		&-26328496236459868825825453762314707600363119492039390909109358206
		948160187666205409353070233189252471980032\,y\\
		&-234326050588925531828325584560803025498194298859111264310613909945027997365344471499000501804565124330591092736)\partial^{18}
		\\ & +(-2296060986440683341448312345342019280770684151507088633570051653
		719564677183596134400\,y^6
		\\ &-15182023375231210596276356784819080872835836768967037986067
		975217661858201950536238891008000\,y^5\\
		&+659745257353190882486422943625944972729731543998179808519707877416
		7003926663805592023172173004800\,y^4\\
		&+390018584390417072307167823345301543691260525095250713486827938892
		390184320942841640082617246741954560\,y^3\\
		&-15282099828058008864702325755388577836636092711097075416559932
		927829298887401689741170274267164683955863552\,y^2\\
		&+53171586708462210440095514526350361191393206116689681350478491473
		272569500821058360564883360042050600032010240\,y\\
		&+2416068861368139736567419743882470538184223139119307069078128318873270365448886390704687168156549021016022790963200)\partial^{19}
		\\ & +(5091171455575978697058183841590138719540137248954807733028647
		747435800940617180119040000\,y^6
		\\ & +3879610339885870244176387905315471207404170558534569709464806833622
		6973216247051165945036800000\,y^5\\
		&+13677767199212386471830875216964525447845530309844379738570506441
		630230299702482441761298086625280000\,y^4\\
		&-10507044205032703672291524045707812424116216789381900593110932827
		7912474876331868741581
		02892208148237516800\,y^3\\
		&-445170119930278273256646684976965512923450100272529446552100
		981289166754811030490048066655970718723373373849600\,y^2\\
		&+1029158534811991599632738690606269599410572173471594132213633192
		7554854373781157937331968420811218322449613455360000\,y\\&+28626820677537827909199986922139439469978383153456543812669322899551784568077573722817815829644430560173641842032640000)\partial^{20}+
	\end{align*}
	
	\newpage
	
	\begin{align*}
		&+(16935030208678269486457041051647491104009418248542136285729615
		62441024126106665293027737600000\,y^6
		\\ &+112128090387941806105254185221213033031509501069867286744601682676
		25841565617712011985854188748800000\,y^5\\
		&-4777177301325184118650055381719444446607562682108139143497789
		095144992793317958765814245292159847628800000\,y^4\\
		&-31700423178171551496749066625688813163831341757850536842806898
		8416699893193839489440896627862140602547385139200\,y^3\\
		&+100949514162612457570649681616394061405729312505617530262781683638
		53591624283949865960156104740370130615039688704000\,y^2\\
		&+65430476098993380035597129871559915990678672344452131348037447
		794353105436891172692500199659931839560250390321561600000\,y\\
		&-1360481782466194260782581568605011381837207631285341493876218186921892069999763325780780086952910422735909435113629286400000)\partial^{21} 
	\end{align*}
	
\end{landscape}

\subsection{General $0$-order term}
\noindent{$H_3(X)$ :}
\begin{align*}  
	( 290\,y-y^3 )
	+ ( 528\,y^2-1560)\partial
	+ ( 243\,y^3-1404\,y )\partial^{2}
	+ ( 27\,y^4-648\,y^2+2160 )\partial^{3}
\end{align*} 
\noindent{$H_4(X)$ :} 
\begin{align*}
	( -y^2+50\,y+24 )
	+ (64\,y^2+72\,y-1008 )\partial
	+ (16\,y^3-48\,y^2-576\,y+1728 )\partial^{2}
\end{align*} 
\noindent{$H_5 (X)$ :} 
\begin{align*}
	& (y^9-104800744\,y^7+174104044032\,y^5-82431615212544\,y^3+9617056740900864\,y) 
	\\& +(-83053520\,y^8+191761742080\,y^6-148596701936640\,y^4
	+33440484399022080\,y^2\\
	&-868706901405204480)\partial
	\\& +(-23029125\,y^9+72332912000\,y^7-88767223008000\,y^5
	+32039796049920000\,y^3\\
	&-1984593650909184000\,y)\partial^{2}
	\\& +(-2831875\,y^{10}+11857320000\,y^8-22211556000000\,y^6
	+11983543971840000\,y^4\\
	&-1826589574103040000\,y^2+54875902433034240000)\partial^{3}
	\\& +(-156250\,y^{11}+855800000\,y^9-2353387200000\,y^7
	+1868056934400000\,y^5\\
	&-530407371571200000\,y^3+36302379968102400000\,y)\partial^{4}
	\\ &+(-3125\,y^{12}+22000000\,y^{10}-85519200000\,y^8
	+99156326400000\,y^6\\
	&-65065321267200000\,y^4+19243712957644800000\,y^2-849260402284953600000)\partial^{5}  
\end{align*}
\noindent{$H_6 (X)$ :}
\begin{align*} &
	( 15303970800 y - 252586320 \, y^2 -6227803 \, y^3 + 599 \, y^4 ) 
	\\ &
	+  ( -6722792640000 -28723248000 \, y   +   30858084000 \, y^2   - 247410960 \, y^3  -6390132 \, y^4) \partial 
	& \\ & 
	+ (  -25152249600000     -8314215840000 \, y  +  29111400000 \, y^2 +  14157844200   \, y^3       
	\\  &
	-43020180\, y^4 -1746684\, y^5  ) \partial^2 
	& \\ & 
	+ (  1173771648000000   + 27946944000000 \, y     -3912572160000 \, y^2  -13197168000 \, y^3  
	\\   &
	+ 1633473000\, y^4     -129384 \, y^6 ) \partial^3
\end{align*} 
\begin{align*} 
	&  (109\,y^2-154953\,y )  +(-307656\,y^2-18871668\,y+144726480)\partial
	\\& + (-176580\,y^3-24578892\,y^2-161845560\,y+8329132800)\partial^{2}
	&  \\& + (-23544\,y^4-11122704\,y^3-573826680\,y^2+19960408800\,y+165815424000)\partial^{3}
	&  \\& + (-1635552\,y^4-45774720\,y^3+12374920800\,y^2
	-72643392000\,y-3945697920000)\partial^{4}
\end{align*} 

\newpage 

\begin{landscape}
	
	\noindent{$H_7(X)$ :}
	\begin{align*}
		& (y^{19}-507387619116342\,y^{17}-802740828170490338304\,y^{15}
		-8563913018807394706050032640\,y^{13}
		+6928877719389349036059564121989120\,y^{11}\\
		&-1889488355078466263270506243756474368000\,y^9
		+199783737934449191867909110955397754650624000\,y^7\\
		&
		-6209514567366557732151190998588725880170741760000\,y^5+60373523764591196745889692925201803791994716160000000\,y^3
		\\&-142617550691402319475078493723129063669971392921600000000\,y) 
		\\
		& +(-232921388527002\,y^{18}-455003916480962263296\,y^{16}
		-5595505479182703900889543680\,y^{14}
		+5359267163470096952878203878400000\,y^{12}\\
		&-1790750447596077728171307457048775884800\,y^{10}
		+242192518093896257570639060573438857445376000\,y^8\\
		&
		-10976522305118653612390351373153959652024647680000\,y^6
		+188801309522378030819713079326573875028316651520000000\,y^4\\
		&
		-849578054228104945769620510436241145254090532454400000000\,y^2
		+553541642524916989192828703979083182844939128012800000000000)\partial
		\\
		& +(-43131498107985\,y^{19}-101698009998146142336\,y^{17}
		-1441001761781953447350554112\,y^{15}
		+1622622157821339467404150681712640\,y^{13}\\
		&-655363295937332809553707487080390656000\,y^{11}
		+110222884262423760992059821959749682921472000\,y^9\\
		&
		-6963162577316880535809398372458440701132144640000\,y^7
		+195463077661033812028456925271451366713137823744000000\,y^5\\
		& -1537270398566555328673592257653757937696270765260800000000\,y^3
		+2834484949340410178499506262236587867216147717816320000000000\,y)\partial^{2}
		\\ 
		& +(-4190138977689\,y^{20}-11709548869474068480\,y^{18}
		-190740259095998425806343680\,y^{16}
		+250761545284409485936114645002240\,y^{14}\\
		&-121056389124490719167027426982862848000\,y^{12}
		+24786728314624304742443576061745973624832000\,y^{10}\\
		&
		-2106315036035646939339458645316146587458600960000\,y^8
		+89577998116601781174849293487939487260274065408000000\,y^6\\
		&  -1188601371786900616063301231279891037592677148262400000000\,y^4
		+4756060856343753414312957597082444762793487347220480000000
		000\,y^2\\
		&-3589642532973667755782665012000095549005659964964864000000000000)\partial^{3}
		\\
		& +(-231324085692\,y^{21}-754747910118973008\,y^{19}
		-14088550819983097003822080\,y^{17}
		+21476940651405739826066461593600\,y^{15}\\&-12274037662580531229209596454404915200\,y^{13}
		+3010279882178958725770347870540457574400000\,y^{11}\\&
		-334496746109646544834680912514027179343872000000\,y^9
		+20266422216760105834401804553844537058561884160000000\,y^7\\
		&-433438326566927102800531632421680585042528357580800000000\,y^5
		+3372408742538073162201990793367288516864324570972160000000000\,y^3\\
		&-7205789497682540844605222569506158549428259801530368000000000000\,y)\partial^{4}+
		\\
	\end{align*}
	
	\newpage
	
	\begin{align*} 
		& +(-7276708299\,y^{22}-27371665921506480\,y^{20}
		-583359969825161143856640\,y^{18}
		+1024344767123350050915641825280\,y^{16}\\
		&-687170840461794097098013847693721600\,y^{14}
		+199322251667829117690419038929864818688000\,y^{12}\\
		&-28307494810633283329218161045332817844633600000\,y^{10}
		+2327426796576918553408199766888305080789893120000000\,y^8\\
		&-76446963841114494711288612434429137708914416025600000000\,y^6
		+1188001939451026076951506842982872478864827450654720000000000\,y^4\\
		&-603385577917814332915349953862104444107717849435340800000000
		0000\,y^2\\
		&+6607922436713149268288365697555293316398623854533017600000000000000)\partial^{5}  \\
		& +(-121060821\,y^{23}-519363136879776\,y^{21}
		-12589329361889934581760\,y^{19}
		+25302126251373511739591669760\,y^{17}\\
		&-19772202464884839592647254251929600\,y^{15}
		+6714017565928228688520035728081354752000\,y^{13}\\
		&-1195934597522197165985191951618773142732800000\,y^{11}
		+128496817550449617359186264295963354778828800000000\,y^9\\
		&-6220725820796017230227466495015851143758191001600000000\,y^7
		+180826623964990243551570923667810203111844678205440000000000\,y^5\\
		&-1823886381111265337505606804877800162242682358333440000000000
		000\,y^3\\
		&+4326605465487310287646936158092962227193444231544832000000000000000\,y)\partial^{6}
		\\
		& +(-823543\,y^{24}-3992101633296\,y^{22}-109632800412792244224\,y^{20}
		+250682421106487697950730240\,y^{18}
		-226599508735853638112124855091200\,y^{16}\\
		&
		+89319208475424581112176973409222656000\,y^{14}
		-19639979563871778141201022209181011148800000\,y^{12}\\
		&
		+2678638244295631856999285390534769912053760000000\,y^{10}
		-185111225697649160914098420368554727334897254400000000\,y^8\\
		&
		+9684072297784585284993126005710067503579036385280000000000\,y^6
		-353852008232963377959228002588661348316332184043520000000000000\,y^4\\
		&
		+2802757980193056097105341381472739246776766705958912000000000
		000000\,y^2\\
		&-388968213288975237753969849076520310923854301412458496000000000
		0000000)\partial^{7}
	\end{align*}
\end{landscape}

\section{Ring operations and Stein operators}\label{apring}

Our algorithm can also be applied to general univariate Gaussian polynomials. We collect some examples here, in which we consider linear combinations of Gaussian Hermite polynomials. In addition to this proof of concept, we are able to observe from our results that a small change in the target distribution can lead to substantial differences in the complexity of the Stein operators; see Remark \ref{remringop}.

\vspace{3mm}

\noindent{(a) $Y=H_1(X)+H_2(X)$   :}
\begin{align*} 
	y+  (- 4y - 3)\partial	+(4y + 5)\partial^2  
\end{align*}
\noindent{(b) $Y=H_1(X)+H_2(X)+H_3(X)$   :}
\begin{align*} &
	y + (- 4y - 9)\partial +  (- 92y - 43)\partial^2 
	+  (- 27y^2 + 82y + 119)\partial^3  
	+ (27y^2 + 392y + 49)\partial^4    \\ &
	+ (378y^2 + 196y - 686)\partial^5 
\end{align*}
\noindent{(c)  $Y=H_2(X) + H_3(X)$     : }
\begin{align*} &
	134 y +  (- 81y^2 - 172y - 424)\partial  
	+ (243y^2 + 6276y + 1056)\partial^2   \\ &
	+ (9504y^2 + 1292y - 40296)\partial^3 + (2187y^3 + 2214y^2 - 12364y - 13912)\partial^4 
\end{align*}
\begin{align*} &
	y + (- 4y - 8)\partial + (- 98y - 26)\partial^2 
	+ (- 27y^2 + 118y + 324)\partial^3  
	+ (27y^2 + 536y - 188)\partial^4  \\ &
	+(540y^2 - 80y - 2960)\partial^5    
\end{align*}
\noindent{(d)  $Y=H_1(X)+H_2(X)+H_3(X)+H_4(X) $       : }
\begin{align*} &
	8y + (- 633y - 264)\partial +  (- 256y^2 + 17392y + 16033)\partial^2 
	+  (16928y^2 - 49627y - 233513)\partial^3   \\ &
	+ (2048y^3 - 215304y^2 - 707732y + 1361327)\partial^4 & \\ &
	+  (- 45312y^3 + 156709y^2 - 408426y - 1868559)\partial^5 & \\ &
	+  (220928y^3 + 8481141y^2 + 37742788y - 15880534)\partial^6    & \\ &
	+ (2062080y^3 - 2592195y^2 - 95069510y - 125583700)\partial^7  & \\ &
	+ (- 12613120y^3 - 99870290y^2 - 29364920y + 678349360)\partial^8 
\end{align*}

\noindent{(e)  $Y=H_1(X)+H_4(X)$     :}
\begin{align*} &
	y+ (- 10y - 25)\partial +  (- 32y^2 - 600y + 186)\partial^2 + (192y^2 + 17706y + 12888)\partial^3   \\ &
	+ (256y^3 + 45312y^2 + 346032y - 486783)\partial^4  & \\ &
	+   (7680y^3 - 362304y^2 - 2741472y + 1001322)\partial^5  & \\ &
	+  (- 129024y^3 + 145152y^2 + 8273664y + 11580408)\partial^6  & \\ &
	+ (870912y^3 + 7838208y^2 - 2939328y - 88087986)\partial^7 
\end{align*}

\noindent{(f)  $Y=H_2(X)+H_4(X)$ :}
\begin{align*} 
	y + (- 42y - 26)\partial +  (- 16y^2 + 124y + 316)\partial^2 + 
	(160y^2 + 360y - 1360)\partial^3 
\end{align*}

\noindent{(g)   $Y=H_3(X) + H_4 (X)$    : }
\begin{align*} &
	8y +  (- 649y - 240)\partial +  (- 256y^2 + 18670y + 17646)\partial^2  + (17440y^2 - 66183y - 441024)\partial^3  \\ &
	+ (2048y^3 - 258888y^2 - 759228y + 4060134)\partial^4  & \\ &
	+ (- 49408y^3 + 1032645y^2 + 6790131y - 12082662)\partial^5  & \\ &
	+      (460032y^3 + 6676839y^2 + 13811904y - 80220780)\partial^6      & \\ &
	+ (576000y^3 - 473202y^2 - 43174782y - 90319212)\partial^7  & \\ &
	+ (- 4243968y^3 - 43417782y^2 - 1790424y + 658876032)\partial^8   
\end{align*}

\begin{rem}\label{remringop}
	Stein's method is rather sensitive to small perturbations in the target. Even a modest change in the target random variable $Y$ can drastically change the original polynomial Stein operators in terms of the order (the length of null control sequence) as well as the maximum degree of the polynomial coefficients. For example, consider the target $Y=-6H_2(X) + H_4(X)= X^4 -3$ that is obtained from the target random variable in item (f) via a multiplication by $-6$ of its first summand. For $Y$ our code returns the following algebraic polynomial Stein operator of order two
	\begin{align*}
		y + (- 32y - 96)\partial  + (- 16y^2 - 96y - 144)\partial^2.
	\end{align*}
	A similar example of this type is the target $Y=H_3(X)+3H_1(X) =X^3$ with algebraic polynomial Stein operator (compare with \eqref{h3x-new} and \eqref{h3x})
	\begin{align*}
		y - 15 \partial - 81 y \partial^2 - 27y^2 \partial^3.
	\end{align*}
	In contrast, consider the target variable $Y=H_4(X)+H_1(X)$ in item (e) that is perturbed by a copy of the underlying standard Gaussian random variable $X$. One can observe a drastic change in the complexity of the original algebraic polynomial Stein operator \eqref{h4x} not only in the order but also in the maximum degree of the polynomial coefficients.
\end{rem}

\end{document}